\documentclass[hidelinks,onefignum,onetabnum]{siamart251216}

\usepackage{amsopn}
\usepackage{lipsum}
\usepackage{amssymb}
\usepackage{amsfonts}
\usepackage{multirow}
\usepackage{algorithmic}
\usepackage[section]{placeins}

\ifpdf
  \DeclareGraphicsExtensions{.eps,.pdf,.png,.jpg}
\else
  \DeclareGraphicsExtensions{.eps}
\fi

\DeclareMathOperator*{\argmin}{arg\,min}
\DeclareMathOperator*{\argmax}{arg\,max}

\DeclareMathOperator{\Tr}{Tr}
\newcommand{\R}{\mathbb{R}}
\newcommand{\E}{\mathbb{E}}

\newsiamremark{remark}{Remark}
\newsiamremark{hypothesis}{Hypothesis}
\crefname{hypothesis}{Hypothesis}{Hypotheses}
\Crefname{hypothesis}{Hypothesis}{Hypotheses}
\newsiamthm{claim}{Claim}
\crefname{claim}{Claim}{Claim}
\Crefname{claim}{Claim}{Claim}
\newsiamremark{fact}{Fact}
\crefname{fact}{Fact}{Facts}
\Crefname{fact}{Fact}{Facts}
\newsiamremark{assumption}{Assumption}
\crefname{assumption}{Assumption}{Assumptions}
\Crefname{assumption}{Assumption}{Assumptions}
\crefname{appendix}{Appendix}{Appendices}
\Crefname{appendix}{Appendix}{Appendices}

\headers{Viscosity-Informed Generative Actor-Critic}{A. E. Golpashin, G. Puthumanaillam, M. Ornik, and B. A. Conway}

\title{Viscosity-Informed Generative Actor-Critic for High-Dimensional Stochastic Optimal Control%
\thanks{\funding{The work of first and fourth authors was funded by the U.S. Air Force Research Laboratory (AFRL) under grant no.~FA8651-23-1-0001.}}}

\author{
Alen E. Golpashin\thanks{Department of Aerospace Engineering, University of Illinois, Urbana, IL
  (\email{agolpa2@illinois.edu}, \email{gokulp2@illinois.edu}, \email{mornik@illinois.edu}, \email{bconway@illinois.edu}).}
\and Gokul Puthumanaillam\footnotemark[1]
\and Melkior Ornik\footnotemark[1]
\and Bruce A. Conway\footnotemark[1]
}

\ifpdf
\hypersetup{
  pdftitle={Viscosity-Informed Generative Actor-Critic for High-Dimensional Stochastic Optimal Control},
  pdfauthor={A. E. Golpashin, G. Puthumanaillam, M. Ornik, and B. A. Conway}
}
\fi

\begin{document}

\maketitle

% REQUIRED
\begin{abstract}
We introduce a method for approximating viscosity solutions of stationary degenerate elliptic Hamilton--Jacobi--Bellman equations on bounded domains arising in stochastic exit-time control. Viscosity enforcement is formulated as a min--max problem over an envelope-generated test family parameterized by symmetric positive definite matrices. Under structural and asymptotic assumptions, any uniform limit point of the value function approximations satisfies the viscosity inequalities on the sampled test family. Numerical experiments show that the proposed method reduces empirical viscosity violations and improves robustness under perturbed dynamics.
\end{abstract}

% REQUIRED
\begin{keywords}
stochastic optimal control, Hamilton--Jacobi--Bellman equation, viscosity solutions, deep reinforcement learning
\end{keywords}

% REQUIRED
\begin{MSCcodes}
49L25, 49L20, 93E20, 65M99
\end{MSCcodes}

%%%%%%%%%%%%%%%%%%%%%%%%%%%%%%%%%%%%%%%%%%%%%%%%%%%%%%%%%%%%%%%%%%%%%%%%%%%%%%%%%%%%%%%%

\section{Introduction}The formulation of stochastic optimal control problems for dynamical systems whose state evolution is modeled by It\^o processes naturally leads to the analysis of second-order, fully nonlinear, elliptic Hamilton--Jacobi--Bellman (HJB) equations. While one might hope for classical, twice-differentiable solutions, the value functions for such control problems are generally nonsmooth and are rigorously understood within the framework of viscosity solutions. In the nondegenerate setting, the presence of uniform ellipticity opens the possibility of classical solutions; however, this regularity is further contingent upon sufficient smoothness of the problem's coefficients and the structure of its nonlinearity. Whenever these stringent requirements are not met, classical approaches are obstructed. This intrinsic nonsmoothness poses a challenge for any methodology that presumes classical differentiability.

This paper considers learning-based approximation of viscosity solutions of stationary second-order HJB equations arising from stochastic optimal control of controlled It\^o diffusions on bounded domains. In the discounted exit-time setting, the value function is the unique continuous viscosity solution of a degenerate elliptic Dirichlet problem. We construct parametrized approximations of this value function together with associated feedback controls from trajectory data.

The central idea behind our computation is to use the viscosity formulation as a training signal. Instead of   enforcing the HJB equation in strong form, our method enforces the viscosity inequalities themselves. This is accomplished primarily through a hybrid, viscosity-informed loss function for the critic network, which combines the data-driven temporal-difference error with a viscosity-informed penalty. This penalty enforces sampled HJB sub- and supersolution inequalities using test functions generated by Moreau envelopes. To make this viscosity-based loss computationally tractable, we introduce an amortization scheme where a generative proximal network is trained to approximate the argmin mapping, thereby avoiding inner-loop optimization. These components are integrated to form the Viscosity-Informed Generative Actor-Critic (V-GAC) framework, which employs an adversarial three-network architecture with a policy-gradient actor update.

Residual-penalized actor–-critic methods such as HJBPPO \cite{mukherjee2023} provide a natural baseline for comparison, but they remain tied to strong-form residual minimization which implicitly assumes differentiability of the value function and does not encode the viscosity selection mechanism. More fundamentally, HJB equations generally admit multiple weak solutions, and only the viscosity formulation selects the unique physically meaningful solution through comparison principles formulated in the uniform topology. Strong-form residual minimization does not encode this selection mechanism. Our approach is therefore formulated within the viscosity framework, which provides the canonical context for well-posedness and for developing solution methods robust to the expected lack of regularity.

Several related approaches have been studied. One approach involves encoding prior knowledge directly into the network architecture and loss formulation. For problems where the value function admits a convexity characterization, one can use input-convex or partially input-convex networks, or incorporate a convexity penalty, to promote this structural property in the learned solution \cite{liu2023}. For more general linear and nonlinear PDEs, another approach replaces standard residual minimization by adversarial or min--max formulations based on weak solution theory, as in the weak adversarial networks of Zang et al. \cite{zang2020}. Related Hamilton--Jacobi-based optimization work has exploited Moreau envelope and Hopf--Lax connections to approximate proximal mappings from function evaluations of the underlying objective \cite{osher2023}.

The remainder of the paper is organized as follows. \Cref{sec:problem_formulation} presents the problem formulation and viscosity background, \cref{sec:method} introduces the proposed method, \cref{sec:experiments} reports numerical results, and the appendix contains technical details.

%%%%%%%%%%%%%%%%%%%%%%%%%%%%%%%%%%%%%%%%%%%%%%%%%%%%%%%%%%%%%%%%%%%%%%%%%%%%%%%%%%%%%%%%

\section{Problem Formulation and Background}\label{sec:problem_formulation}We introduce the stochastic optimal control problem and the associated HJB equation.

\subsection{Controlled It\^o diffusions and the HJB equation}
The evolution is governed by
\begin{equation}\label{eq:sde}
dX_t = f(X_t,u_{t}) \,dt + \Sigma(X_t,u_{t})\, dW_t, \qquad X_0=x,
\end{equation}
on a filtered probability space $(\Omega',\mathcal F,\{\mathcal F_t\}_{t\ge0},\mathbb P)$ with the usual augmentation; $W$ is an $n$-dimensional $\{\mathcal F_t\}$-Brownian motion, $u=(u_t)_{t\ge0}\in\mathcal U$ is $\{\mathcal F_t\}$-progressively measurable, $X_0$ is $\mathcal F_0$-measurable. The reachable domain $D\subset\R^{n}$ is a bounded open set with a closed interior target $\mathcal{T}\Subset D$. In particular, $\Omega:=D\setminus\mathcal{T}$ with $\partial\Omega = \partial D \cup \partial\mathcal T$ denoting the boundary of this set. The cost functional is 
\begin{equation}\label{eq:J}
J(x;u)
:= \mathbb{E}\!\left[
\int_{0}^{\tau} e^{-\beta t}\,\ell\!\left(X_t,u_t\right)\,dt\, +\, \mathbf{1}_{\{\tau<\infty\}}\,e^{-\beta\tau}\,g(X_{\tau})\,\Big|\, X_0=x\right],
\end{equation}
where $\ell$ is scalar-valued, $\tau:\Omega'\to[0,\infty]$ is an
$\{\mathcal F_t\}$-stopping time taking values in the extended nonnegative
reals, and $\beta\in\mathbb{R}_0^+$. Here $\tau$ denotes the termination rule; in the derivation below we specialize to the discounted exit-time case $\tau:=\tau_{\Omega}$. For each initial state $X_0\in\R^{n}$, the functional $u \;\mapsto\; J(x;u)$ is minimized over all admissible control trajectories in $\mathcal U$. For $\phi \in C^2(\Omega)$ and $c \in U\Subset\R^{m}$, the infinitesimal generator of the controlled
diffusion $X_t$ is
\begin{equation} \label{eq:generator}
\mathcal L^{c} \phi(\cdot)
:= f(\cdot,c)\cdot\nabla \phi(\cdot) + \tfrac{1}{2}\,\operatorname{tr}\!\big(a(\cdot,c)\,\nabla^2\phi(\cdot)\big),
\end{equation}
where $a(\cdot,c) = \Sigma(\cdot,c)\,\Sigma(\cdot,c)^{\top}$. Assume there exists $L>0$, uniform in $c\in U$, such that
\begin{gather}
\|f(X,c)-f(\tilde X,c)\| + \|\Sigma(X,c)-\Sigma(\tilde X,c)\|_F \le L\,\|X-\tilde X\|, \label{eq:lipschitz} \\
\|f(X,c)\|^2 + \|\Sigma(X,c)\|_F^2 \le L^2\,(1+\|X\|^2), \label{eq:growth}
\end{gather}
where $\|\cdot\|$ is the Euclidean norm and $\|\cdot\|_F$ denotes the Frobenius norm. Under \eqref{eq:lipschitz}-\eqref{eq:growth}, for each admissible control $u$, \eqref{eq:sde} admits a unique solution $X$ \cite{oksendal2003}.

Formal dynamic programming yields the discounted HJB Dirichlet problem on a bounded, open domain $\Omega\subset\mathbb{R}^n$ with prescribed boundary data $g\in C(\partial\Omega)$
\begin{equation}\label{eq:HJB}
\begin{cases}
\beta V(x) - \inf_{c\in U}\Big\{\ell(x,c)+ \mathcal{L}^{c}V(x)\Big\}=0, & x\in\Omega,\\[3pt]
V(x)=g(x), & x\in\partial\Omega.
\end{cases}
\end{equation}
We write the interior equation of \eqref{eq:HJB} as
\begin{equation*}
F(x,V,\nabla V,\nabla^2 V)=0,
\end{equation*}
with the nonlinear operator $F:\Omega\times\R\times\R^{n}\times\mathbb{S}(n)\to\R$,
\begin{equation}\label{eq:F-def}
F(x,r,p,A):=\beta r - \inf_{c\in U}H(x,p,A;c),
\end{equation}
where  $H(x,p,A;c):= \ell(x,c)+p^\top f(x,c)+\tfrac12\operatorname{tr}\!\big(a(x,c)A\big)$ is the control Hamiltonian. Throughout,
\begin{align*}
\mathbb{S}(n) &:= \{A\in\mathbb{R}^{n\times n} : A^\top=A\},\\
\mathbb{S}_{+}(n) &:= \{A\in\mathbb{S}(n) : A\succeq 0\},\\
\mathbb{S}_{++}(n) &:= \{A\in\mathbb{S}(n) : A\succ 0\}.
\end{align*}
denote the spaces of symmetric, positive semidefinite, and positive definite matrices, respectively, with the Loewner order $A\succeq B \Leftrightarrow A-B\succeq 0$. We use $X_t$ for the stochastic state process and $x\in\R^n$ for spatial points where the PDE is evaluated.

HJB equation \eqref{eq:HJB} gives the sufficient condition that enables the feedback mechanism in optimal control. In the discounted exit-time setting, the solution to \eqref{eq:HJB},
namely $V(x):=\inf_{u\in\mathcal U} J(x;u)$, is the optimal cost on
$\overline{\Omega}$. Related stationary formulations, including
Kru\v{z}kov-transformed minimum-time problems, lead to analogous Bellman
equations with their own boundary conditions and are not re-derived here.

\begin{theorem}[Classical Verification \cite{fleming2006}]\label{thm:verification}
Assume $V\in C^{2}(\Omega)\cap C(\overline{\Omega})$. Suppose
\begin{romannum}
\item \label{thm:verification1} $V$ satisfies the HJB identity \eqref{eq:HJB} in $\Omega$ together with $V=g$ on $\partial\Omega$;
\item \label{thm:verification2} there exists a measurable feedback $\mu^{*}:\overline{\Omega}\to U$ attaining the minimum in \eqref{eq:HJB} pointwise in $\Omega$.
\end{romannum}
Let $\tau_\Omega$ be the first exit time from $\Omega$. Then, for every admissible control trajectory $u$,
\[
V(x)\ \le\ \mathbb{E}_x\!\Big[\int_0^{\tau_\Omega} e^{-\beta t}\,\ell(X_t,u_t)\,dt\ +\ e^{-\beta\tau_\Omega}\,g(X_{\tau_\Omega})\Big].
\]
Moreover, the Markov control $u_t=\mu^*(X_t)$ attains equality; hence $u$ is optimal and $V\equiv V^*$ on $\overline{\Omega}$.
\end{theorem}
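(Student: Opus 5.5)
The plan is the standard Itô--Dynkin argument with localization. Fix $x\in\Omega$ and an admissible control $u$, and let $X$ be the corresponding solution of \eqref{eq:sde}. Since $V$ is only $C^2$ in the open set $\Omega$, we cannot apply Itô's formula up to $\tau_\Omega$ directly. Instead, take an exhausting sequence of open sets $\Omega_k\Subset\Omega$ with $\overline{\Omega}_k\subset\Omega_{k+1}$ and $\bigcup_k\Omega_k=\Omega$. Let $\tau_k:=\tau_{\Omega_k}\wedge k$.

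On $\overline{\Omega}_k$ the function $V$ and its first two derivatives are bounded, and by \eqref{eq:lipschitz}--\eqref{eq:growth} with $\Omega$ bounded, $f$ and $\Sigma$ are bounded there too. Applying Itô's formula to $e^{-\beta t}V(X_t)$ on $[0,\tau_k]$ gives
\[
e^{-\beta\tau_k}V(X_{\tau_k}) = V(x) + \int_0^{\tau_k} e^{-\beta t}\big(\mathcal L^{u_t}V - \beta V\big)(X_t)\,dt + \int_0^{\tau_k} e^{-\beta t}\nabla V(X_t)^\top\Sigma(X_t,u_t)\,dW_t .
\]
The stochastic integral is a true martingale with zero expectation, because its integrand is bounded and $\tau_k\le k$. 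The HJB identity in (i) gives $\beta V - \mathcal L^{c}V \le \ell(\cdot,c)$ for every $c\in U$, so $\mathcal L^{u_t}V-\beta V\ge -\ell(\cdot,u_t)$. Taking expectations yields
\[
V(x)\le \mathbb E_x\Big[\int_0^{\tau_k}e^{-\beta t}\ell(X_t,u_t)\,dt + e^{-\beta\tau_k}V(X_{\tau_k})\Big].
\]
Under the feedback $u_t=\mu^*(X_t)$ this becomes an equality. That needs (ii) plus existence of a solution to the closed-loop SDE, which I would assume as part of admissibility of the Markov control.

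The main obstacle is passing $k\to\infty$. As $k\to\infty$, $\tau_k\uparrow\tau_\Omega$ a.s. On $\{\tau_\Omega<\infty\}$, path continuity together with $V\in C(\overline\Omega)$ and $V=g$ on $\partial\Omega$ gives $e^{-\beta\tau_k}V(X_{\tau_k})\to e^{-\beta\tau_\Omega}g(X_{\tau_\Omega})$. Since $\overline\Omega$ is compact, $V$ is bounded and this term converges by dominated convergence. For the running cost I would assume $\ell$ bounded (or bounded below) on $\overline\Omega\times U$ and use dominated or monotone convergence. On $\{\tau_\Omega=\infty\}$ the terminal term is handled by $\beta>0$, which gives $e^{-\beta\tau_k}V(X_{\tau_k})\to0$. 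If $\beta=0$, I would instead need $\mathbb E\tau_\Omega<\infty$, for instance from nondegeneracy in some direction or from the admissibility definition, and I would state this as a standing hypothesis, since the cited verification theorem in \cite{fleming2006} implicitly uses one. Taking limits gives the inequality for arbitrary $u$ and equality for $\mu^*$. Therefore $V(x)=J(x;\mu^*)\le J(x;u)$ for all $u$, so $V=V^*$ in $\Omega$, and on $\partial\Omega$ both equal $g$ because $\tau_\Omega=0$ there.
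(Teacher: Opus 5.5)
Your argument is correct, and it is the standard It\^o--Dynkin localization proof from \cite{fleming2006}; the paper only cites that reference and gives no proof of its own. The side conditions you flag are the ones that reference also needs. Boundedness of $\ell$ follows from \cref{assumps:structure}(ii), and $\mathbb{E}\tau_\Omega<\infty$ in the case $\beta=0$ follows from the uniform ellipticity in \cref{assumps:structure}(i) on the bounded domain $\Omega$. Well-posedness of the closed-loop SDE under the merely measurable $\mu^*$ is not covered by the paper's assumptions, and it must indeed be assumed as admissibility of the Markov policy.
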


\cref{thm:verification} provides the justification for actor-critic methods: the actor network approximates the optimal feedback $\mu^{*}$ for (ii), while the critic network  $V_\theta$ approximates the corresponding value function $V$ for (i). Because the smoothness requirement in \cref{thm:verification} is often not satisfied, the viscosity framework, allowing merely continuous solutions, is adopted.

Next, we recall the standard structural conditions for the operator $F$.

\begin{definition} \label{def:structure} We say that $F$ is degenerately elliptic if it is nonincreasing with respect to the Loewner order: whenever $A,B\in\mathbb{S}(n)$ with $A\succeq B$, 
\[
F(x,r,p,A)\ \le\ F(x,r,p,B),
\]
equivalently $F(x,r,p,A+Q)\le F(x,r,p,A)$ for all $Q\in\mathbb{S}_{+}(n)$. We say that $F$ is proper if it is nondecreasing in $r$: whenever $r\le s$,
\[
F(x,r,p,A)\ \le\ F(x,s,p,A).
\]
If, in addition, there exists $\beta>0$ such that $F(x,s,p,A)-F(x,r,p,A) \ge \beta\,(s-r)$ for all $s>r$,
then $F$ is strictly proper. In particular, if $\partial_r F$ exists, then $\partial_r F\ge\beta>0$.
\end{definition} 

We make the following standing assumptions.
\begin{assumption} \label[assumption]{assumps:structure} For Dirichlet problem \eqref{eq:HJB}, the following hold:
\begin{romannum}
\item \label[assumption]{assump:discount-or-elliptic} Either $\beta>0$, or there exist $0<\lambda\le\Lambda<\infty$ such that
\[
\lambda I\ \preceq\ \Sigma(x,c)\Sigma(x,c)^\top\ \preceq\ \Lambda I
\quad\text{for all }(x,c)\in\overline{\Omega}\times U.
\]

\item\label[assumption]{assump:attainment-continuity}
$U$ is compact and the coefficients
$(x,c)\mapsto f(x,c)$, $(x,c)\mapsto \Sigma(x,c)$, and $(x,c)\mapsto \ell(x,c)$
are continuous on $\overline{\Omega}\times U$.

\item \label[assumption]{assump:domain-regular} $\partial\Omega$ is Lipschitz and $F$-regular, with $g\in C(\partial\Omega)$.
\end{romannum}
\end{assumption}

Under \cref{assumps:structure} (ii), the infimum in \eqref{eq:F-def} is attained for every $(x,p,A)$, and the resulting operator $F$ is continuous. We review the standard (Crandall--Lions \cite{crandall1992}) test-function definition for this operator.

\begin{definition}\label{def:visc_ineq}
Let $F$ be the operator defined in \eqref{eq:F-def}. A function $V:\overline{\Omega}\to\mathbb{R}$ is called a  
\begin{romannum}
\item viscosity subsolution in $\Omega$ if
$V$ is upper semicontinuous (u.s.c.) on $\overline{\Omega}$ and, for any $\phi\in C^2(\Omega)$ and any point
$\zeta\in\Omega$ where $V-\phi$ attains a local maximum, 
\[
F(\zeta, V(\zeta), \nabla \phi(\zeta), \nabla^2 \phi(\zeta)) \le 0.
\]
\item viscosity supersolution in $\Omega$ if
$V$ is lower semicontinuous (l.s.c.) on $\overline{\Omega}$ and, for any $\phi\in C^2(\Omega)$ and any point
$\zeta\in\Omega$ where $V-\phi$ attains a local minimum,
\[
F(\zeta, V(\zeta), \nabla \phi(\zeta), \nabla^2 \phi(\zeta)) \ge 0.
\]
\end{romannum}
We call $V$ a viscosity solution in $\Omega$ if it is both a viscosity subsolution and a viscosity supersolution; in particular, such a $V$ is continuous on $\overline\Omega$.
\end{definition}

\subsection{Comparison principle}
We now state the comparison principle, which yields uniqueness of viscosity solutions.

\begin{theorem}[Comparison Principle; cf.~\cite{jensen1988}]\label{thm:jensen-comparison}Let $u,v\in C(\Omega)\cap C(\overline{\Omega})$, with $u$ a viscosity supersolution and $v$ a viscosity subsolution of $F=0$ in $\Omega$.\footnote{Viscosity supersolution $u$ should not be confused with the control trajectory $u$ in \eqref{eq:sde}.}
If
\begin{romannum}
\item \label{thm:jensen-comparison_1} either $F$ is uniformly elliptic in the matrix argument, or
\item \label{thm:jensen-comparison_2} $F$ is degenerate elliptic and strictly proper in $r$,
\end{romannum}
then the comparison estimate holds:
\begin{equation*}
\sup_{\Omega}(v-u)_+\;\le\;\sup_{\partial\Omega}(v-u)_+.
\end{equation*}
In particular, viscosity solutions of $F=0$ are unique under \textnormal{(i)} or \textnormal{(ii)}.
\end{theorem}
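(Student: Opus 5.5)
We argue by contradiction via the Crandall--Ishii--Lions doubling of variables. Suppose $M:=\sup_{\Omega}(v-u)>\sup_{\partial\Omega}(v-u)_+=:m\ge 0$. Under case (ii) we work directly with $v-u$. Under case (i), where $\beta$ may vanish, we first perturb the subsolution into a strict one. We set $v_\delta:=v-\delta\,(K-e^{\gamma x_1})$ with $K$ large and $\gamma$ chosen so that $\tfrac12\lambda\gamma^2-L(1+\sup_{\overline\Omega}|x|)\gamma>1$; this is possible since the drift is bounded on $\overline\Omega$ by \eqref{eq:growth}. By uniform ellipticity and degenerate ellipticity, $v_\delta$ is then a subsolution of $F\le -c\delta$ for some $c>0$. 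For small $\delta$ the positive gap $M-m$ persists, with $\sup_\Omega(v_\delta-u)>\sup_{\partial\Omega}(v_\delta-u)_+$. In both cases we thus have a subsolution satisfying a strict inequality, or a strictly proper $F$, that violates the boundary bound.

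Next we double variables. For $\alpha>0$ we let
\[
\Phi_\alpha(x,y):=v(x)-u(y)-\tfrac{\alpha}{2}\|x-y\|^2
\]
and take a maximizer $(x_\alpha,y_\alpha)$ on $\overline\Omega\times\overline\Omega$, which exists by compactness and continuity. The standard lemma gives $\alpha\|x_\alpha-y_\alpha\|^2\to0$ and $\Phi_\alpha(x_\alpha,y_\alpha)\to M$. Because $M$ exceeds the boundary values and $u,v$ are continuous up to $\overline\Omega$, the points $x_\alpha,y_\alpha$ lie in $\Omega$ for large $\alpha$. We then invoke the Crandall--Ishii lemma (theorem of sums, \cite{crandall1992}). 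It yields $X,Y\in\mathbb S(n)$ with $(p_\alpha,X)\in\overline J^{2,+}v(x_\alpha)$ and $(p_\alpha,Y)\in\overline J^{2,-}u(y_\alpha)$, where $p_\alpha=\alpha(x_\alpha-y_\alpha)$, together with the matrix inequality
\[
\begin{pmatrix}X&0\\0&-Y\end{pmatrix}\preceq 3\alpha\begin{pmatrix}I&-I\\-I&I\end{pmatrix}.
\]
Since $F$ is continuous, the viscosity inequalities extend to the closed semijets, so
\[
F(x_\alpha,v(x_\alpha),p_\alpha,X)\le0\le F(y_\alpha,u(y_\alpha),p_\alpha,Y).
\]

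The key estimate, and the main obstacle, is the structure condition
\[
F(y,r,\alpha(x-y),Y)-F(x,r,\alpha(x-y),X)\le\omega\big(\alpha\|x-y\|^2+\|x-y\|\big).
\]
To prove it we bound the difference of infima by the supremum over $c\in U$ of the Hamiltonian differences. The running-cost term is handled by the uniform continuity of $\ell$ on the compact set $\overline\Omega\times U$, from \cref{assumps:structure}(ii). The drift term contributes at most $L\alpha\|x-y\|^2$ by \eqref{eq:lipschitz}. For the trace term we test the matrix inequality against $(\Sigma(x,c)\xi,\Sigma(y,c)\xi)$ and sum over an orthonormal basis. This gives $\tfrac12\operatorname{tr}(a(x,c)X)-\tfrac12\operatorname{tr}(a(y,c)Y)\le\tfrac32\alpha\|\Sigma(x,c)-\Sigma(y,c)\|_F^2\le\tfrac32 L^2\alpha\|x-y\|^2$.

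In case (ii) we subtract the two viscosity inequalities and use strict properness:
\[
\beta\big(v(x_\alpha)-u(y_\alpha)\big)\le\omega\big(\alpha\|x_\alpha-y_\alpha\|^2+\|x_\alpha-y_\alpha\|\big)\to0.
\]
The left side tends to $\beta M>0$, which is a contradiction. In case (i), properness removes the $r$-dependence, and the strict inequality for $v_\delta$ yields $c\delta\le o(1)$, again a contradiction; we then let $\delta\to0$. Either way we obtain $\sup_\Omega(v-u)\le\sup_{\partial\Omega}(v-u)_+$, hence the stated estimate. For uniqueness, two viscosity solutions share the boundary data $g$, so the right-hand side vanishes. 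Applying the estimate with the roles of the two solutions swapped gives equality.
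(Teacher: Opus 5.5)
Your proof is correct. The paper gives no argument for this theorem of its own; it imports it from \cite{jensen1988}. The relevant comparison is therefore with the cited route. Jensen regularizes the sub- and supersolution by sup-/inf-convolution to obtain semiconvex approximants, then applies an Alexandrov-type maximum principle for semiconvex functions. You instead use doubling of variables and the theorem of sums of \cite{crandall1992}, which packages that same semiconvex machinery. The only problem-specific input then becomes the structure estimate $F(y,r,\alpha(x-y),Y)-F(x,r,\alpha(x-y),X)\le\omega(\alpha\|x-y\|^2+\|x-y\|)$. You verify it correctly from \eqref{eq:lipschitz}, the uniform continuity of $\ell$ on $\overline\Omega\times U$, and the trace computation against $(\Sigma(x,c)\xi,\Sigma(y,c)\xi)$.

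This route buys something real. As I recall, Jensen's theorem is formulated for $x$-independent operators $F(D^2u,Du,u)$ and $W^{1,\infty}$ solutions. Your argument handles the $x$-dependent HJB operator and merely continuous $u,v$, which is what the paper actually uses. It also makes explicit the $x$-structure of $F$ on which comparison rests, which the abstract conditions (i)/(ii) leave implicit.

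Two small points of presentation in case (i):
\begin{itemize}
\item State explicitly that $K\ge\max_{\overline\Omega}e^{\gamma x_1}$, so that $w:=K-e^{\gamma x_1}\ge 0$. This is what keeps $\sup_{\partial\Omega}(v_\delta-u)_+\le\sup_{\partial\Omega}(v-u)_+$ and gives the term $-\beta\delta w$ the favorable sign.
\item The strict inequality comes from $\mathcal L^{c}w\le -e^{\gamma x_1}\bigl(\tfrac12\lambda\gamma^2-\bar f\gamma\bigr)\le-\min_{\overline\Omega}e^{\gamma x_1}$, with $\bar f:=\sup_{\overline\Omega\times U}\|f\|$. That is, it uses uniform ellipticity in the $e_1$ direction plus the drift bound, not degenerate ellipticity.
\end{itemize}
Finally, the limit $\delta\to0$ is unnecessary, since a single sufficiently small $\delta$ already yields the contradiction.
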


Under \cref{assumps:structure}, the Dirichlet HJB admits a continuous viscosity solution $V\in C(\overline{\Omega})$ \cite{fleming2006,bardi1997}. Each critic $V_\theta$ in our learning setup is continuous. Since a uniform limit of continuous functions is continuous, any uniform limit of the critics (i.e. by imposing appropriate normalization constraints) remains in $C(\overline{\Omega})$. Hence, the learned limit lies in the same function space as the true solution $V$. Throughout this paper we therefore work with continuous functions $V:\overline{\Omega}\to\R$, for which viscosity solutions are unique under \cref{assumps:structure} (i) by \cref{thm:jensen-comparison}.  
We now turn to the numerical approximation of this viscosity solution in high dimension and the role of viscosity-aware training.

\subsection{Physics-informed neural networks as PDE solvers}
\label{sec:pinns}

PINNs \cite{raissi2019} solve differential equations by fitting a neural network $V_\theta$ to satisfy boundary/initial data and to minimize a physics residual computed by automatic differentiation (AD). A core justification for representing solutions with neural networks is the Universal Approximation Theorem (UAT), including versions that control derivatives.

\begin{remark}
Neural networks are dense in $C(\overline{\Omega})$ under mild assumptions on the activation \cite{cybenko1989,hornik1990,hornik1991}, but uniform approximation does not control derivatives. Since HJB value functions are typically nondifferentiable, strong-form residual minimization based on $(\nabla V_\theta,\nabla^2 V_\theta)$ is ill-posed as a proxy for viscosity error.
\end{remark}

PINNs leverage AD to evaluate the differential operator applied to $V_\theta$. With smooth activations (e.g., $\tanh$), $V_\theta\in C^\infty$ and $\nabla V_\theta$, $\nabla^2V_\theta$ exist everywhere; with ReLU, derivatives exist a.e. For the operator $F$ in \eqref{eq:F-def}, the pointwise residual is
\begin{equation*}
\mathcal{R}_\theta(x)
:=F\!\big(x,\,V_\theta(x),\,\nabla V_\theta(x),\,\nabla^2 V_\theta(x)\big).
\end{equation*}
However, the differentiability enabling AD is a property of the network $V_\theta$, not of the true solution $V$. When $V$ is nonsmooth, minimizing a strong-form residual does not provide a stable proxy for viscosity error. In particular, small $L^2$ residuals do not preclude violations of the viscosity conditions on sets of measure zero, nor do they encode the selection principle that ensures uniqueness of the viscosity solution.

Accordingly, we enforce the viscosity inequalities of \cref{def:visc_ineq} through an adversarial min--max objective targeting worst-case violations. Rather than enforcing a pointwise equality residual everywhere, we generate touching test functions via matrix-kernel inf/sup-convolutions (Moreau envelopes) introduced in \cref{subsec:envelopes}.

\subsection{Envelope-jet penalty: a viscosity-consistent formulation}\label{subsec:envelopes}We introduce the envelope construction and the associated envelope-jet penalty.

\begin{definition}
Let $V:\overline{\Omega}\to\R$ be continuous. For $(x,M)\in\Omega\times\mathbb{S}_{++}(n)$ define
\begin{align*} \label{envelope}
\mathcal{E}_{\inf}[V](x;M)
&:= \inf_{\zeta\in\overline{\Omega}}
\Big\{V(\zeta)+\tfrac12(x-\zeta)^\top M(x-\zeta)\Big\},\\
\mathcal{E}_{\sup}[V](x;M)
&:= \sup_{\zeta\in\overline{\Omega}}
\Big\{V(\zeta)-\tfrac12(x-\zeta)^\top M(x-\zeta)\Big\}.
\end{align*}
Denote corresponding minimizers and maximizers by
\[
\zeta_x\in\argmin_{\zeta\in\overline{\Omega}}
\Big\{V(\zeta)+\tfrac12(x-\zeta)^\top M(x-\zeta)\Big\}, \quad
\zeta_x^\star\in\argmax_{\zeta\in\overline{\Omega}}
\Big\{V(\zeta)-\tfrac12(x-\zeta)^\top M(x-\zeta)\Big\}.
\]
The associated jet data are then
\[
p^{-}(x,M)=M(x-\zeta_x),\quad A^{-}=-M,
\qquad
p^{+}(x,M)=-\,M(x-\zeta_x^\star),\quad A^{+}=+M.
\]
\end{definition}

The extremizers $\zeta_x$ and $\zeta_x^\star$ are the contact points; existence follows by Weierstrass. In particular, if a contact lies in $\Omega$, the associated quadratics yield legitimate second-order jets $(p^{-},A^{-}) \in J^{2,-}V(\zeta_x)$ and $(p^{+},A^{+}) \in J^{2,+}V(\zeta_x^\star)$; see \cref{app:envelopes}.

\begin{assumption}\label[assumption]{assump:sampling}
Let $\nu_X$ be a probability measure on $\overline{\Omega}$ and
$\nu_{\mathcal M}$ a probability measure on $\mathbb S_{++}(n)$. For each sampled $x\sim \nu_X$, the curvature-bank entries $M_1,\dots,M_K$ are drawn i.i.d. from $\nu_{\mathcal M}$, independently of $x$.
\end{assumption}

\begin{definition}
Let $F$ be continuous. For $(x,M)$ and a continuous $V$ define
\begin{subequations}\label{eq:hinges}
\begin{align}
\mathcal{J}_{\text{super}}(x,M;V)
&:= \max\{-\,F(\zeta_x,V(\zeta_x),p^{-},A^{-}),0\},\\
\mathcal{J}_{\text{sub}}(x,M;V)
&:=\max\{\,F(\zeta_x^\star,V(\zeta_x^\star),p^{+},A^{+}),0\}.
\end{align}
\end{subequations}
Given a sampling measure $\nu_X\otimes \nu_{\mathcal M}$ on $\overline{\Omega}\times\mathbb{S}_{++}(n)$, with chosen contacts $\zeta_x,\zeta_x^\star\in\Omega$, the envelope-jet penalty is
\begin{equation}\label{eq:visc}
\mathcal{J}(V)
:= \E_{(x,M)\sim\nu_X\otimes \nu_{\mathcal M}}
\big[\mathcal{J}_{\text{super}}+\mathcal{J}_{\text{sub}}\big].
\end{equation}
\end{definition}
Comparison in $C(\overline\Omega)$ is the selection principle. Accordingly, the relevant notion of error is a worst-case (sup-norm) viscosity-inequality violation.

\begin{proposition}\label{prop:jet-iff}
Consider
\begin{multline}\label{eq:visc_quant}
\sup_{\zeta\in\Omega}\;
\sup_{(p,A)\in J^{2,-}V(\zeta)}
\max\{-F(\zeta,V(\zeta),p,A),0\}\\
+\sup_{\zeta\in\Omega}\;
\sup_{(p,A)\in J^{2,+}V(\zeta)}
\max\{F(\zeta,V(\zeta),p,A),0\},
\end{multline}
with $\sup\emptyset=0$. This quantity vanishes if and only if $V$ is a viscosity solution of $F=0$ in $\Omega$. If, in addition, $V=g$ on $\partial\Omega$,  $V$ is the unique continuous viscosity solution of the Dirichlet problem \eqref{eq:HJB}.
\end{proposition}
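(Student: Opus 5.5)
The plan is to reduce the statement to the standard equivalence between the test-function definition (\cref{def:visc_ineq}) and the semijet definition of viscosity solutions, and then invoke \cref{thm:jensen-comparison} for uniqueness. Here $J^{2,\pm}V(\zeta)$ are the second-order super/subjets. By definition, $(p,A)\in J^{2,+}V(\zeta)$ if and only if
\[
V(y)\le V(\zeta)+p\cdot(y-\zeta)+\tfrac12(y-\zeta)^\top A(y-\zeta)+o(|y-\zeta|^2),
\]
and $J^{2,-}$ is defined by the reversed inequality. Both terms in \eqref{eq:visc_quant} are nonnegative, so the quantity vanishes if and only if each supremum vanishes. That is, $F(\zeta,V(\zeta),p,A)\le 0$ for every $\zeta\in\Omega$ and every $(p,A)\in J^{2,+}V(\zeta)$, and $F(\zeta,V(\zeta),p,A)\ge0$ for every $(p,A)\in J^{2,-}V(\zeta)$. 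Empty jets contribute $0$ by convention. Continuity of $V$ supplies the u.s.c./l.s.c. requirements.

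\textbf{Step 1 (jets $\Rightarrow$ test functions).} Suppose the jet inequalities hold, and let $\phi\in C^2(\Omega)$ be such that $V-\phi$ has a local maximum at $\zeta$. Taylor's theorem gives $(\nabla\phi(\zeta),\nabla^2\phi(\zeta))\in J^{2,+}V(\zeta)$, so $F\le0$ at $\zeta$. The minimum case is symmetric. Hence $V$ is a viscosity solution.

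\textbf{Step 2 (test functions $\Rightarrow$ jets).} This is the main obstacle. Given $(p,A)\in J^{2,+}V(\zeta)$, I need a $C^2$ function $\phi$ with $\nabla\phi(\zeta)=p$ and $\nabla^2\phi(\zeta)=A$ such that $V-\phi$ has a local maximum at $\zeta$. I would use the classical construction (Crandall--Ishii--Lions \cite{crandall1992}, or Fleming--Soner \cite{fleming2006}). Set
\[
\rho(r):=\sup\Big\{\frac{(V(y)-V(\zeta)-p\cdot(y-\zeta)-\tfrac12(y-\zeta)^\top A(y-\zeta))_+}{|y-\zeta|^2}:0<|y-\zeta|\le r\Big\},
\]
which is nondecreasing with $\rho(0^+)=0$. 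Then smooth it twice: $\sigma(r):=\frac{1}{r}\int_r^{2r}\frac{1}{s}\int_s^{2s}\rho(t)\,dt\,ds$, so that $\sigma\ge\rho$, $\sigma(r)\to0$, and $r^2\sigma(r)$ is $C^2$ with vanishing first and second derivatives at $0$. The test function $\phi(y)=V(\zeta)+p\cdot(y-\zeta)+\tfrac12(y-\zeta)^\top A(y-\zeta)+|y-\zeta|^2\sigma(|y-\zeta|)$ then touches $V$ from above at $\zeta$ and has the required jet. The subsolution inequality for $\phi$ then gives $F(\zeta,V(\zeta),p,A)\le0$, and the $J^{2,-}$ case follows by applying the same argument to $-V$.

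An alternative route avoids the explicit construction. Take $\phi_\epsilon=$ the quadratic plus $\epsilon|y-\zeta|^2$, localize to get maxima $\zeta_\epsilon\to\zeta$ of $V-\phi_\epsilon$, and use continuity of $F$ (from \cref{assumps:structure}(ii)) together with degenerate ellipticity to pass $\epsilon\to0$. I would mention this as a fallback, but the explicit construction is cleaner.

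\textbf{Step 3 (uniqueness).} If the quantity vanishes and $V=g$ on $\partial\Omega$, then $V\in C(\overline\Omega)$ is a viscosity solution of the Dirichlet problem. Let $W$ be any other continuous viscosity solution with $W=g$ on $\partial\Omega$. Under \cref{assumps:structure}(i), either $\beta>0$, so that $F$ is strictly proper with constant $\beta$ since $F(x,s,\cdot)-F(x,r,\cdot)=\beta(s-r)$, or $F$ is uniformly elliptic. In either case \cref{thm:jensen-comparison} applies in both directions and gives $\sup_\Omega(V-W)_+\le\sup_{\partial\Omega}(V-W)_+=0$ and likewise with $V$ and $W$ swapped. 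Hence $V=W$, and $V$ is the unique continuous viscosity solution, i.e. the value function.
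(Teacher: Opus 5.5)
Your proposal is correct and follows the same route as the paper. Nonnegativity of both summands reduces the vanishing of \eqref{eq:visc_quant} to the semijet inequalities, the semijet characterization turns these into the test-function definition, and \cref{thm:jensen-comparison} applied in both directions gives uniqueness; the only difference is that the paper simply cites \cref{lem:jet-char} from \cite{crandall1992}, whereas you reprove both directions of it. Two small refinements: in Step 2, the double average makes $r^2\sigma(r)$ genuinely $C^2$ precisely because $\rho$ is continuous when $V$ is continuous; and your fallback is simpler than stated, since $V-\phi_\epsilon$ already has a strict local maximum at $\zeta$ itself, so continuity of $F$ in $A$ alone suffices and degenerate ellipticity is not needed.
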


\begin{proof}
Assume \eqref{eq:visc_quant} vanishes. Each of the two summands is nonnegative, hence each must itself be
equal to $0$. Therefore, for every $\zeta\in\Omega$ and every $(p,A)\in J^{2,-}V(\zeta)$,
\[
\max\{-F(\zeta,V(\zeta),p,A),0\}=0.
\]
Since $\max\{s,0\}=0$ if and only if $s\le 0$, this gives $-F(\zeta,V(\zeta),p,A)\le 0$, that is,
\[
F(\zeta,V(\zeta),p,A)\ge 0
\quad
\text{for every }(p,A)\in J^{2,-}V(\zeta).
\]
By \cref{lem:jet-char}, $V$ is a viscosity supersolution of $F=0$ in
$\Omega$. The argument for subsolution is identical.

Conversely, assume that $V$ is a viscosity solution of $F=0$ in $\Omega$.
 $V$ is both a viscosity supersolution and a viscosity subsolution.
By \cref{lem:jet-char}, for every $\zeta\in\Omega$ and
$(p,A)\in J^{2,-}V(\zeta)$,
\[
\max\{-F(\zeta,V(\zeta),p,A),0\}=0,
\]
and for every $(p,A)\in J^{2,+}V(\zeta)$,
\[
\max\{F(\zeta,V(\zeta),p,A),0\}=0.
\]
Taking the inner and outer suprema with the $\sup\emptyset=0$ convention, shows that both summands above are equal to $0$.

Now, by the first part of the proposition, $V$ is a viscosity solution of $F=0$ in $\Omega$. If $W\in C(\overline\Omega)$ is another viscosity solution with the same boundary trace $g$, then \cref{thm:jensen-comparison}, applied with $W$ as subsolution and $V$ as supersolution, gives
\[
\sup_{\Omega}\max\{W-V,0\}\le \sup_{\partial\Omega}\max\{W-V,0\}=0,
\]
so $W\le V$ in $\Omega$. By symmetry $V\le W$, hence $V=W$ (and on $\overline{\Omega}$ by continuity).
\end{proof}

\begin{corollary}
\label{cor:envelope-necessary}
Let \cref{assumps:structure} hold. If $V$ is a viscosity solution of $F=0$ in $\Omega$, then $\mathcal J(V)=0$.
In particular, for the HJB operator \eqref{eq:F-def}, the unique continuous
viscosity solution of the Dirichlet problem \eqref{eq:HJB} lies in the zero
set of $\mathcal J$. Moreover, vanishing envelope penalty \eqref{eq:visc} enforces the viscosity inequalities on the sampled envelope test family.
\end{corollary}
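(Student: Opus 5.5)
The plan is to reduce \cref{cor:envelope-necessary} to \cref{prop:jet-iff} by showing that the envelope jets are genuine second-order jets at the contact points. Every integrand in \eqref{eq:visc} is then a term already dominated by \eqref{eq:visc_quant}.

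\emph{Step 1: envelope jets are jets.} Fix $(x,M)\in\Omega\times\mathbb S_{++}(n)$ and a minimizer $\zeta_x$ of $\zeta\mapsto V(\zeta)+\tfrac12(x-\zeta)^\top M(x-\zeta)$ with $\zeta_x\in\Omega$; this interiority is exactly the convention in \eqref{eq:visc}. Set
\[
\phi(\zeta):=-\tfrac12(x-\zeta)^\top M(x-\zeta).
\]
Then $\phi\in C^2$, and minimality states that $V-\phi$ has a minimum over $\overline\Omega$ at $\zeta_x$. A direct computation gives $\nabla\phi(\zeta_x)=M(x-\zeta_x)=p^-$ and $\nabla^2\phi=-M=A^-$. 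Hence $(p^-,A^-)\in J^{2,-}V(\zeta_x)$. Symmetrically, take a maximizer $\zeta_x^\star$ of $V(\zeta)-\tfrac12(x-\zeta)^\top M(x-\zeta)$ and set $\psi(\zeta):=\tfrac12(x-\zeta)^\top M(x-\zeta)$. Then $V-\psi$ has a maximum at $\zeta_x^\star$, with $\nabla\psi(\zeta_x^\star)=-M(x-\zeta_x^\star)=p^+$ and $\nabla^2\psi=M=A^+$. Hence $(p^+,A^+)\in J^{2,+}V(\zeta_x^\star)$. These are the facts referenced in \cref{app:envelopes}; one could alternatively bypass jets and apply \cref{def:visc_ineq} directly with $\phi,\psi$ as test functions.

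\emph{Step 2: pointwise vanishing.} Since $V$ is a viscosity solution, \cref{prop:jet-iff} (equivalently \cref{lem:jet-char}) gives $F(\zeta,V(\zeta),p,A)\ge0$ for all sub-jets and $\le0$ for all super-jets at interior points. By Step 1, $\mathcal J_{\text{super}}(x,M;V)=\max\{-F(\zeta_x,V(\zeta_x),p^-,A^-),0\}=0$, and likewise $\mathcal J_{\text{sub}}(x,M;V)=0$. This holds for every $(x,M)$ in the support of $\nu_X\otimes\nu_{\mathcal M}$ from \cref{assump:sampling}.

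\emph{Step 3: integrate and conclude.} The integrand of \eqref{eq:visc} is identically zero, and it is measurable because it is nonnegative, so $\mathcal J(V)=0$. For the HJB statement, use \cref{assumps:structure}: it gives existence of a continuous viscosity solution of \eqref{eq:HJB}, and \cref{prop:jet-iff} together with \cref{thm:jensen-comparison} gives its uniqueness; this solution therefore lies in the zero set of $\mathcal J$. For the last sentence, argue in the converse direction. If $\mathcal J(V)=0$, the nonnegative integrand vanishes $\nu_X\otimes\nu_{\mathcal M}$-a.e. On that full-measure set the sampled pairs $(\zeta_x,p^\mp,A^\mp)$ satisfy the super/sub inequalities, which is precisely enforcement of the viscosity inequalities on the sampled envelope test family.

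\emph{Main obstacle.} The delicate point is the interiority of contacts. The minimizer of the envelope may land on $\partial\Omega$, where \cref{def:visc_ineq} gives no information. I would handle this by the convention built into \eqref{eq:visc}, which evaluates the hinges only when the chosen contacts lie in $\Omega$ (and sets the hinge to zero otherwise). I would also note that measurability of a selection $x\mapsto\zeta_x$ follows from a measurable selection theorem, since the objective is continuous and $\overline\Omega$ is compact. This is needed so that the expectation in \eqref{eq:visc} is well defined.
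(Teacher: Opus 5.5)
Your proposal is correct and follows essentially the paper's proof: show that interior envelope contacts carry genuine semijets $(p^\mp,A^\mp)$, apply \cref{prop:jet-iff} (via \cref{lem:jet-char}) to make each hinge vanish, integrate, obtain uniqueness from \cref{thm:jensen-comparison}, and get the last claim from nonnegativity of the hinges. The only differences are cosmetic: you read the semijets off the quadratic test functions where the paper cites \cref{thm:inf-to-subjet} and \cref{thm:sup-to-superjet}; your Step 1 works unchanged for anchors $x\in\overline\Omega$ (as the sampling measure needs), not just $x\in\Omega$; and ``measurable because it is nonnegative'' is a non sequitur, though harmless because the integrand is identically zero.
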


\begin{proof}
Assume that $V$ is a viscosity solution of $F=0$ in $\Omega$.
By \cref{prop:jet-iff}, the jet-violation quantity \eqref{eq:visc_quant} vanishes. Fix a sampled pair $(x,M)\in\overline{\Omega}\times\mathbb S_{++}(n)$.
If the corresponding inf-envelope contact $\zeta_x$ lies in $\Omega$, then \cref{thm:inf-to-subjet} gives
\[
(p^{-},A^{-})\in J^{2,-}V(\zeta_x),
\qquad
p^{-}=M(x-\zeta_x),\quad A^{-}=-M.
\]
Since \eqref{eq:visc_quant} vanishes, it follows that $F\big(\zeta_x,V(\zeta_x),p^{-},A^{-}\big)\ge 0$. The sup-envelope case is analogous. Hence, for every sampled pair $(x,M)$, the corresponding envelope hinge terms
vanish:
\[
\mathcal J_{\text{super}}(x,M;V)=0,
\qquad
\mathcal J_{\text{sub}}(x,M;V)=0.
\]
Taking expectation with respect to
$\nu_X\otimes\nu_{\mathcal M}$ gives $\mathcal J(V)=0$. The uniqueness claim follows from \cref{thm:jensen-comparison} and
the final claim follows from the nonnegativity of the hinge terms \eqref{eq:hinges} for $(\nu_X\otimes\nu_{\mathcal M})\text{-a.e.}$ $(x,M)$, whenever the contacts lie in $\Omega$. 
\end{proof}

%%%%%%%%%%%%%%%%%%%%%%%%%%%%%%%%%%%%%%%%%%%%%%%%%%%%%%%%%%%%%%%%%%%%%%%%%%%%%%%%%%%%%%%%

\section{Methodology}\label{sec:method}We introduce V-GAC (Viscosity-Informed Generative Actor-Critic), a three-network, on-policy method that enforces sampled envelope-jet conditions for the HJB via matrix-kernel envelopes evaluated at adversarial contact points, consisting of:
\begin{romannum}
\item Actor $\pi_\phi(\cdot\mid x)$ on $U$, with $c \sim \pi_\phi(\cdot\mid x)$.
\item Critic $V_\theta:\overline{\Omega}\to\R$.
\item Proximal Network $P_\psi:\overline{\Omega}\times\mathbb{S}_{++}(n)\times\{-1,+1\}\to\overline{\Omega}$, which, given an ``anchor'' $x\in\overline{\Omega}$, a curvature $M\in\mathbb{S}_{++}(n)$, and a polarity $b\in\{-1,+1\}$, outputs a contact point $\zeta$ intended to maximize the current viscosity-inequality violation against $V_\theta$.
\end{romannum}
A measurable feedback $\bar\pi_\phi:\overline{\Omega}\to U$ associated with the policy
$\pi_\phi(\cdot\mid x)$ is fixed and used for viscosity testing.

\subsection{A policy-conditioned HJB surrogate}
Viscosity inequalities are enforced using a policy-conditioned HJB operator,
defined by fixing the control in the Hamiltonian term at the current policy feedback. Let $\pi_\phi(\cdot\mid x)$ denote the stochastic policy generating trajectories, and let $\bar\pi_\phi:\overline{\Omega}\to U$ denote the deterministic feedback induced by the location parameter of $\pi_\phi(\cdot\mid x)$. The resulting policy-conditioned operator is given by
\begin{equation}\label{eq:Hpi-def}
\mathcal{H}^{\bar\pi_\phi}(x,V,p,A)
:= \beta\,V - \Big[\ell(x,\bar\pi_\phi(x))+p^\top f(x,\bar\pi_\phi(x))+\tfrac12\Tr\!\big(a(x,\bar\pi_\phi(x))A\big)\Big].
\end{equation}
This operator allows us to relate the true HJB operator, $F$ in \eqref{eq:F-def}, to our test operator via the fundamental decomposition
\begin{equation*}
F(x,V,p,A)=\mathcal{H}^{\bar\pi_\phi}(x,V,p,A) \;+\; \text{gap}_{\bar\pi_\phi}(x,p,A).
\end{equation*}
Here, the term $\text{gap}_{\bar\pi_\phi}$ represents the nonnegative
suboptimality gap of the current policy, defined as the difference between
the cost incurred by the actor's action and the minimum possible cost over
all actions
\begin{equation}\label{eq:gap}
\text{gap}_{\bar\pi_\phi}(x,p,A)
:= H\!\big(x,p,A;\bar\pi_\phi(x)\big)
-\inf_{c\in U} H(x,p,A;c)\ \ge\ 0.
\end{equation}
This decomposition has direct consequences for the viscosity tests. Since $\text{gap}_{\bar\pi_\phi}\ge 0$, the condition $\mathcal{H}^{\bar\pi_\phi}\ge 0$ is sufficient (and generally conservative) for the supersolution inequality $F\ge 0$. On the subsolution side, $F\le 0$ is equivalent to $\mathcal{H}^{\bar\pi_\phi}\le -\,\text{gap}_{\bar\pi_\phi}$; in particular,
enforcing $\mathcal{H}^{\bar\pi_\phi}\le 0$ is necessary but not sufficient unless the gap is small. Accordingly, the actor update is augmented (via the jet-alignment term introduced below) to reduce $\text{gap}_{\bar\pi_\phi}$ at the envelope-generated jets. Note that when the diffusion is control-independent ($\Sigma(x,c) \equiv \Sigma(x)$), the second-order term vanishes from \eqref{eq:gap}.
\begin{remark}
In what follows, viscosity inequalities are enforced for the policy-conditioned operator $\mathcal{H}^{\bar\pi_\phi}$; the deviation from the true HJB operator $F$ is entirely quantified by \eqref{eq:gap}. The actor is therefore formulated to induce $\text{gap}_{\bar\pi_\phi}\to0$ at the envelope-generated jets.
\end{remark}

\subsection{Viscosity loss with adversarial contacts}\label{sec:visc-loss}
For each anchor $x$ and curvature matrix $M$, the Proximal Net proposes contact points
\[
\zeta^{-}_{x,M}=P_\psi(x,M,-1),\qquad \zeta^{+}_{x,M}=P_\psi(x,M,+1),
\]
interpreted as approximate inf-/sup-envelope contacts.
The induced jet data are
\[
p^{-}_{x,M}=M(x-\zeta^{-}_{x,M}),\quad A^{-}_{x,M}=-M,\qquad
p^{+}_{x,M}=-\,M(x-\zeta^{+}_{x,M}),\quad A^{+}_{x,M}=+M.
\]
We evaluate the policy-conditioned operator \eqref{eq:Hpi-def} by defining the pointwise violations
\begin{align*}
g_{\text{super}}(x,M;\theta,\psi)
&:= -\mathcal{H}^{\bar\pi_\phi}\!\big(\zeta^-_{x,M},\,V_\theta(\zeta^-_{x,M}),\,p^-_{x,M},\,A^-_{x,M}\big),\\
g_{\text{sub}}(x,M;\theta,\psi)
&:= \ \ \mathcal{H}^{\bar\pi_\phi}\!\big(\zeta^+_{x,M},\,V_\theta(\zeta^+_{x,M}),\,p^+_{x,M},\,A^+_{x,M}\big),
\end{align*}
which are nonpositive when the corresponding viscosity inequalities hold for $\mathcal{H}^{\bar\pi_\phi}$ at the induced jet. Moreover, $g_{\text{super/sub}}=0$ whenever $\zeta^b_{x,M}\in\partial\Omega$. Per minibatch, a small set of symmetric positive definite matrices is drawn, defined by
\begin{equation}
\label{eq:M-bank}
\mathcal{M}=\Big\{\,M_k=R_k^\top \operatorname{diag}(\alpha_{k,1},\ldots,\alpha_{k,n})\, R_k \ \Big|\ k=1,\ldots,K,\ M_k\in\mathbb{S}_{++}(n)\Big\}.
\end{equation}
The eigenvalues $\alpha_{k,i}$ are sampled independently from a log-uniform distribution on $[\alpha_{\min},\alpha_{\max}]$, and $R_k\in O(n)$ is sampled
uniformly. The notion of ``for all touching tests'' is approximated through maximizing over this parametrized family of curvature matrices $M\in\mathcal M$. Then, based on the envelope hinges \eqref{eq:hinges}, we define 
\begin{align*}
\label{eq:visc-hinges}
\mathcal{J}^{\bar\pi_\phi}_{\text{super}}(x;\theta,\psi)
&:= \left[\max\left\{\max_{M\in\mathcal M} g_{\text{super}}(x,M;\theta,\psi),\,0\right\}\right]^2,\\
\mathcal{J}^{\bar\pi_\phi}_{\text{sub}}(x;\theta,\psi)
&:= \left[\max\left\{\max_{M\in\mathcal M} g_{\text{sub}}(x,M;\theta,\psi),\,0\right\}\right]^2.
\end{align*}
The viscosity loss is the expected sum of these penalties,
\begin{equation*}\label{eq:Lvisc}
L_{\text{Visc}}(\theta,\psi)
=\E_{x\sim\nu_X}\big[\mathcal{J}^{\bar\pi_\phi}_{\text{super}}(x;\theta,\psi)+\mathcal{J}^{\bar\pi_\phi}_{\text{sub}}(x;\theta,\psi)\big].
\end{equation*}
The anchor sampling $\nu_X$ introduced in \cref{assump:sampling} is defined as the mixture $\nu_X=(1-\rho_{\text{cover}})\nu_{\text{on}}+\rho_{\text{cover}}\nu_{\text{cov}}$, where $\rho_{\text{cover}}\in[0,1]$, $\nu_{\text{on}}$ is the empirical on-policy state law generated by rollouts of \eqref{eq:sde} under $\pi_\phi$, and $\nu_{\text{cov}}$ is a simple covering distribution with full support on $\overline{\Omega}$.

\subsection{Training objectives}
The V-GAC framework is trained via a co-dependent optimization of its three
constituent networks. In particular, the critic network $V_\theta$ is trained to minimize a hybrid objective that balances consistency with empirical data against compliance with the PDE structure. Its loss function is a weighted sum of a temporal difference (TD) error and our proposed viscosity loss
\begin{equation}
\label{eq:Lcritic}
L_{\text{Critic}}(\theta,\psi)=
\lambda_{\text{TD}}L_{TD}(\theta)\;+\;\lambda_{\text{visc}}\,L_{\text{Visc}}(\theta,\psi)+\lambda_{\text{bdy}}\,\mathbb{E}_{x\sim\nu_\Gamma}\!\big[\,|V_\theta(x)-g(x)|^2\,\big],
\end{equation}
where $L_{TD}(\theta)=\E[(V_\theta(x_t)-\hat V_t)^2]$ measures the squared error against target values $\hat V_t$. In particular, the target values $\hat V_t$ are computed from the discretized $\ell(x,c)$ through a discrete-time recursion with discount $\gamma \;:=\; e^{-\beta\,\Delta t}$. The viscosity loss term, $L_{\text{Visc}}$, is composed of hinge penalties for the supersolution ($g_{\text{super}}$) and subsolution ($g_{\text{sub}}$) conditions, evaluated at contact points generated by the proximal network;  $\lambda_{\text{visc}}>0$ is a regularization parameter. Boundary or terminal value constraints are imposed via the penalty term with $\lambda_{\text{bdy}}>0$,
where $\Gamma\subset\partial\Omega$ denotes the corresponding set
and $\nu_\Gamma$ is a probability measure supported on $\Gamma$.

The proximal network $P_\psi$ is trained adversarially to find large sampled violations of the viscosity conditions for the current critic. The proximal loss is given by
\begin{equation*}
\begin{aligned}
L_{\text{Prox}}(\psi;\theta)
:= {} & -\lambda_{\text{adv}}\E_{x}\Big[
\max_{M\in\mathcal M} g_{\text{super}}(x,M;\theta,\psi)
+ \max_{M\in\mathcal M} g_{\text{sub}}(x,M;\theta,\psi)\Big] \\
& \quad +\lambda_{\text{env}}\,L_{\text{env}}(\psi;\theta)+ \lambda_{\text{prox-opt}}\,L_{\text{prox-opt}}(\psi;\theta).
\end{aligned}
\end{equation*}
The proximal mapping $P_\psi:\overline{\Omega}\times\mathbb{S}_{++}(n)\times\{-1,+1\}\to\overline{\Omega}$
associates to each $(x,M,b)$ a candidate envelope contact point
$\zeta^b_{x,M}:=P_\psi(x,M,b)$, where $b=-1$ and $b=+1$ correspond to
inf- and sup-convolution contacts, respectively. The maximization is performed over a sampled set of test matrices $\mathcal{M}$ to find the worst-case violation. With $\lambda_{\text{prox-opt}}>0,$ the contact-optimality term $L_{\text{prox\text{-}opt}}$ penalizes violations of a projection-compatible first-order stationarity condition for the constrained envelope problems over $\overline{\Omega}$. Let $\Pi_{\overline{\Omega}}$ denote Euclidean projection onto $\overline{\Omega}$ and fix $\eta>0$.\footnote{If $\overline{\Omega}$ is nonconvex, $\Pi_{\overline{\Omega}}(y)$
denotes any fixed selection from
$\arg\min_{z\in\overline{\Omega}}\|y-z\|$.} Define
\begin{equation*}
\label{eq:Lproxopt}
\begin{aligned}
G^{-}(x,M)
&:=\frac{1}{\eta}\Big(\zeta^-_{x,M}-\Pi_{\overline{\Omega}}\big(\zeta^-_{x,M}-\eta(\nabla V_\theta(\zeta^-_{x,M})-p^{-}_{x,M})\big)\Big),\\
G^{+}(x,M)
&:=\frac{1}{\eta}\Big(\zeta^+_{x,M}-\Pi_{\overline{\Omega}}\big(\zeta^+_{x,M}-\eta(p^+_{x,M}-\nabla V_\theta(\zeta^+_{x,M}))\big)\Big),
\end{aligned}
\end{equation*}
and set
\[
L_{\text{prox\text{-}opt}}(\psi;\theta)
:=\E_{x,M}\big[\ \|G^{-}(x,M)\|^2+\|G^{+}(x,M)\|^2\ \big],
\]
Additionally, the envelope regularizer $L_{\text{env}}$ biases the proximal outputs toward minimizers of the inf-envelope and maximizers of the sup-envelope at the adversarial curvatures, which stabilizes the search for the contact points. Define
\[
\begin{aligned}
E_{\inf}(x,M)
&:= V_\theta(\zeta^-_{x,M})
+\tfrac12 (x-\zeta^-_{x,M})^\top M (x-\zeta^-_{x,M}), \\
E_{\sup}(x,M)
&:= V_\theta(\zeta^+_{x,M})
-\tfrac12 (x-\zeta^+_{x,M})^\top M (x-\zeta^+_{x,M}).
\end{aligned}
\]
and select indices
\[
k_{\text{super}}(x)\in\argmax_{M\in\mathcal M} g_{\text{super}}(x,M;\theta,\psi),\qquad
k_{\text{sub}}(x)\in\argmax_{M\in\mathcal M} g_{\text{sub}}(x,M;\theta,\psi).
\]
Then
\[
L_{\text{env}}(\psi;\theta)
:=\E_x\Big[\,E_{\inf}\big(x,M_{k_{\text{super}}(x)}\big)-E_{\sup}\big(x,M_{k_{\text{sub}}(x)}\big)\,\Big].
\]
In regimes where the envelope subproblems are convex (e.g., under curvature
domination or semiconvexity of $V_\theta$), vanishing
$L_{\text{prox\text{-}opt}}$ is sufficient for the proximal network outputs to recover the global inf- and sup-envelope contacts, since first-order stationarity
characterizes global optimality. In general, however, the envelope objectives may be nonconvex, and first-order stationarity enforced by $L_{\text{prox\text{-}opt}}$ constitutes only a necessary condition. The envelope-consistency term $L_{\text{env}}$ does not restore sufficiency, but acts as a stabilizing regularizer by coupling the inf- and sup-envelope energies at adversarial curvatures, discouraging degenerate or one-sided stationary solutions and improving the conditioning of the min--max game. In particular, small $L_{\text{env}}$ indicates that the proposed contacts are closer to extremizers of the inf- and sup-envelope objectives.

Finally, the actor network $\pi_\phi$ is updated using the clipped surrogate objective from PPO~\cite{schulman2017}, with an additional entropy regularization term to encourage exploration, augmented by a jet-alignment penalty that pushes the policy toward greedy minimizers of the HJB inner term at both inf- and sup-envelope contact jets
\begin{equation*}
\label{eq:actor-total}
\begin{split}
L_{\text{Actor}}(\phi)
&= -\,\mathbb{E}_t\!\Big[\min\big(\varrho _t(\phi)\hat{A}_t,\ \text{clip}(\varrho _t(\phi),1-\epsilon,1+\epsilon)\hat{A}_t\big)\Big] \\
&\qquad -\ \lambda_{\text{ent}}\,\mathbb{E}_t[\mathcal{S}(\pi_\phi(\cdot|x_t))] +\ \lambda_{\text{jet}}\,L_{\text{jet}}.
\end{split}
\end{equation*}
Here, $\varrho_{t}(\phi)=\frac{\pi_\phi(c_t|x_t)}{\pi_{\phi_{\text{old}}}(c_t|x_t)}$ is the likelihood ratio, $\hat A_t:=V_\theta(x_t)-\hat V_t$ denotes the corresponding advantage estimate, where $\hat V_t$ is the target value used in the critic update and $\mathcal{S}(\pi_\phi(\cdot\mid x_t))$ is the entropy of the policy distribution\footnote{$\lambda_{\text{ent}}$ may be annealed downward over training iterations.}. The jet-alignment term is
\begin{equation*}\label{eq:Ljet}
L_{\text{jet}}(\phi;\psi)
:= \mathbb{E}_{(x,M)\sim \nu_X\otimes\nu_{\mathcal M}}
\left[\sum_{\substack{b\in\{-1,+1\}\\ \zeta^b_{x,M}\notin\partial\Omega}}
H\!\Big(\zeta^b_{x,M},\,p^b_{x,M},\,A^b_{x,M};\bar\pi_\phi(\zeta^b_{x,M})\Big)\right],
\end{equation*}
where $\zeta^-_{x,M}=P_\psi(x,M,-1)$ and $\zeta^+_{x,M}=P_\psi(x,M,+1)$.

\begin{lemma}\label{lem:jet_gap_decomp}
Under \cref{assumps:structure} (ii), define the policy-independent baseline
\[
L_{\text{jet}}^\star(\psi)
:=\mathbb{E}_{(x,M)\sim \nu_X\otimes\nu_{\mathcal M}}
\left[
\sum_{\substack{b\in\{-1,+1\}\\ \zeta^b_{x,M}\notin\partial\Omega}}
\min_{c\in U}
H(\zeta^b_{x,M},\,p^b_{x,M},\,A^b_{x,M};c)
\right].
\]
Then $L_{\text{jet}}(\phi;\psi)-L_{\text{jet}}^\star(\psi) \ge 0$. Moreover, equality holds if and only if
$\text{gap}_{\bar\pi_\phi}=0$ at the sampled interior jets $\nu_X\otimes\nu_{\mathcal M}$-a.s.
\end{lemma}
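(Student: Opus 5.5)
The plan is to reduce the claim to the pointwise gap inequality \eqref{eq:gap} and then use the elementary fact that a nonnegative integrand has zero expectation exactly when it vanishes almost surely.

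\emph{Step 1 (well-posedness).} By \cref{assumps:structure}~(ii), $U$ is compact and $c\mapsto H(\zeta,p,A;c)$ is continuous for each fixed $(\zeta,p,A)$. Hence the minimum over $U$ in $L_{\text{jet}}^\star$ is attained and equals $\inf_{c\in U}H$. The jet data $(p^b_{x,M},A^b_{x,M})$ and contacts $\zeta^b_{x,M}$ are measurable in $(x,M)$, since they are compositions of $P_\psi$ with continuous maps. I will take for granted the measurability of the marginal function $(\zeta,p,A)\mapsto \min_{c}H$, which is continuous by compactness of $U$, and of $\bar\pi_\phi$. Integrability of both integrands holds whenever $\nu_{\mathcal M}$ has the moments needed to make $\E\|M\|<\infty$. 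Under this standing integrability, the difference of the two expectations is the expectation of the difference.

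\emph{Step 2 (pointwise identity).} For each $(x,M)$ and each $b$ with $\zeta^b_{x,M}\notin\partial\Omega$, the difference of summands is
\[
H\bigl(\zeta^b_{x,M},p^b_{x,M},A^b_{x,M};\bar\pi_\phi(\zeta^b_{x,M})\bigr)-\min_{c\in U}H\bigl(\zeta^b_{x,M},p^b_{x,M},A^b_{x,M};c\bigr)=\text{gap}_{\bar\pi_\phi}\bigl(\zeta^b_{x,M},p^b_{x,M},A^b_{x,M}\bigr)\ge 0,
\]
by \eqref{eq:gap}, since $\bar\pi_\phi(\zeta)\in U$. The summation index set is identical in both functionals, so
\[
L_{\text{jet}}(\phi;\psi)-L_{\text{jet}}^\star(\psi)=\E_{(x,M)}\Bigl[\sum_{b:\,\zeta^b_{x,M}\notin\partial\Omega}\text{gap}_{\bar\pi_\phi}\bigl(\zeta^b_{x,M},p^b_{x,M},A^b_{x,M}\bigr)\Bigr]\ge 0 .
\]

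\emph{Step 3 (equality case).} Set $S(x,M)$ to be the bracketed sum above; it is nonnegative and measurable. Then $\E[S]=0$ if and only if $S=0$ holds $\nu_X\otimes\nu_{\mathcal M}$-a.s. Since each summand is nonnegative, $S(x,M)=0$ if and only if every interior summand vanishes. This is exactly the statement that $\text{gap}_{\bar\pi_\phi}=0$ at the sampled interior jets almost surely, which gives both directions.

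The only delicate point is Step 1, namely measurability and integrability, which justifies splitting the expectation. I would handle it by the continuity of the marginal minimum and a moment assumption on $\nu_{\mathcal M}$. If $\nu_{\mathcal M}$ is the bounded log-uniform bank from \eqref{eq:M-bank}, this is automatic because $\overline{\Omega}$ and the curvatures are bounded.
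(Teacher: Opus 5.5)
Your proposal is correct and takes essentially the same route as the paper. Both write $H(\cdot;\bar\pi_\phi)=\min_{c\in U}H+\text{gap}_{\bar\pi_\phi}$ pointwise via \eqref{eq:gap} and compactness of $U$, sum over the common index set and take expectations, and use that a nonnegative integrand has zero expectation iff it vanishes almost surely. Your Step 1 (measurability, and finiteness of both expectations so they may be subtracted) is a small rigor addition that the paper leaves implicit.
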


\begin{proof}
Fix $(x,M)$ and $b\in\{-1,+1\}$. By \eqref{eq:gap} and compactness/continuity,
\[
\begin{aligned}
H(\zeta^b_{x,M},\,p^b_{x,M},\,A^b_{x,M};\bar\pi_\phi(\zeta^b_{x,M}))
&=\min_{c\in U} H(\zeta^b_{x,M},\,p^b_{x,M},\,A^b_{x,M};c)\\
&\quad+
\text{gap}_{\bar\pi_\phi}(\zeta^b_{x,M},\,p^b_{x,M},\,A^b_{x,M}).
\end{aligned}
\]
Summing over the two jet polarities and taking expectation yields
\[
L_{\text{jet}}(\phi;\psi)-L_{\text{jet}}^\star(\psi)
=\mathbb{E}_{(x,M)\sim\nu_X\otimes\nu_{\mathcal M}}
\left[
\sum_{\substack{b\in\{-1,+1\}\\ \zeta^b_{x,M}\notin\partial\Omega}}
\text{gap}_{\bar\pi_\phi}(\zeta^b_{x,M},\,p^b_{x,M},\,A^b_{x,M})
\right].
\]
Nonnegativity follows from $\text{gap}_{\bar\pi_\phi}\ge0$. If the expectation is zero, then the nonnegative sum above vanishes almost surely, hence each nonnegative term in the sum vanishes almost surely. The converse is immediate.
\end{proof}

\subsection{V-GAC as a min--max game}
The overall training dynamic is best understood as a competition between two players. The first player is the actor-critic pair, parameterized by $\phi$ and $\theta$, whose objective is to find an optimal policy supported by a value function consistent with both empirical data and the viscosity conditions. The second player is the proximal network, parameterized by $\psi$, which acts as an adversary by searching for the most egregious violations of viscosity conditions.
We solve the following min--max game
\begin{equation*}
\min_{\theta,\phi}\ \Big( L_{\text{Actor}}(\phi) + L_{\text{Critic}}(\theta,\psi) \Big)
\quad\text{versus}\quad
\max_{\psi}\ \big\{-L_{\text{Prox}}(\psi;\theta)\big\}.
\end{equation*}
In practice, we approximate this problem by alternating gradient updates. Within each training epoch, we first strengthen the adversary by performing $K_{\text{adv}}$ steps of gradient descent on $L_{\text{Prox}}(\psi;\theta)$ to find possible worst-case contact points. We then update the actor and critic by performing gradient descent on $L_{\text{Actor}}(\phi)$ and $L_{\text{Critic}}(\theta,\psi)$. This alternating procedure seeks a saddle point where the critic reduces the viscosity-inequality violations at the adversarial contact points and the proximal network cannot further increase the violations. Algorithm \ref{alg:vgac} outlines the overall scheme.

\begin{algorithm}[tbhp] % may need to use [tbhp] or [H]
\caption{Viscosity-Informed Generative Actor-Critic}
\label{alg:vgac}
\begin{algorithmic}[1]
\STATE Initialize actor $\pi_\phi$, critic $V_\theta$, proximal net $P_\psi$.
\FOR{\texttt{iteration} $=1,2,\dots$}
\STATE Collect on-policy trajectories with $\pi_\phi$ for $S$ steps and compute the target values and advantages, forming the dataset $\mathcal{D}_{\text{on-policy}}=\{(x_t,\,c_t,\,\Delta\ell_t,\,\hat V_t,\,\hat{A}_t)\}_{t=0}^{S-1}$.
\STATE Induce the empirical on-policy law $\nu_{\text{on}}$ from $\mathcal{D}_{\text{on-policy}}$.
\FOR{\texttt{PPO\_epoch} $=1,\dots,N$}
\FOR{\texttt{SGD\_step}\footnotemark\ $=1,\dots,Z$}
\STATE Sample an on-policy PPO/TD minibatch of size $B$ from $\mathcal{D}_{\text{on-policy}}$.
\STATE Construct the viscosity/prox anchors $\{x_i\}_{i=1}^{B}$ via the mixing law $\nu_X$.
\STATE Sample a bank $\mathcal{M}=\{M_k\}_{k=1}^K$.
\STATE \textbf{(Prox)} For each $(x_i,M_k)$ compute contacts $\zeta^{b}_{i,k}=P_\psi(x_i,M_k,b)$, \\
then take $K_{\text{adv}}$ gradient descent steps on
$L_{\text{Prox}}(\psi;\theta)$ (with $(\theta,\phi)$ fixed).
\STATE \textbf{(Critic)} With $\psi$ fixed (detach $\zeta^b$), compute $L_{\text{Visc}}(\theta,\psi)$ on anchors $\{x_i\};$ \\
update $\theta$ by gradient descent on $L_{\text{Critic}}(\theta,\psi)$.
\STATE \textbf{(Actor)} Update $\phi$ by gradient descent on the PPO objective $L_{\text{Actor}}(\phi)$ \\
(with entropy regularization using the PPO/TD minibatch states $\{x_j\}$ and jet-alignment using anchors $\{x_i\}$ with $\zeta^b$).
\ENDFOR
\ENDFOR
\ENDFOR
\end{algorithmic}
\end{algorithm}
\footnotetext{SGD denotes stochastic gradient descent; each SGD step corresponds to one optimizer update using a sampled minibatch.}

\subsection{Viscosity consistency of the learned critic}\label{convergence}We now establish subsequential viscosity consistency for the critic sequence  on the sampled SPD-envelope jet family. Consider the following assumptions.

\begin{assumption}\label[assumption]{assumps:limit} We assume the following  regularity and optimality conditions:
\begin{romannum}
\item \label[assumption]{assump:prox-delta-opt} For sequences $(\theta_k,\psi_k)$ there exist $\delta_k\to 0^+$ such that the proximal network outputs
\[
\zeta^{-}_{k}(x,M)=P_{\psi_k}(x,M,-1),\qquad \zeta^{+}_{k}(x,M)=P_{\psi_k}(x,M,+1)
\]
are $\delta_k$-optimal minimizers/maximizers corresponding to  inf/sup-convolution problems over $\overline{\Omega}$.

\item \label[assumption]{assump:critic-regularity} The critic class $\{V_\theta\}$ is equibounded and equicontinuous on $\overline{\Omega}$
(e.g., enforced by spectral/weight-norm constraints or explicit regularization yielding a uniform modulus of continuity).

\item \label[assumption]{assump:actor_gap}
For $\{\phi_k\}$ there exist $\varepsilon_k\to 0$ such that $L_{\text{jet}}(\phi_k;\psi_k)-L_{\text{jet}}^\star(\psi_{k}) \le \varepsilon_k$.

\item \label[assumption]{assump:boundary_consistency}
Either $V_{\theta_k}=g$ on $\partial\Omega$ for every $k$, or $\Gamma=\partial\Omega$, $\operatorname{supp}\nu_\Gamma=\partial\Omega$, and $\mathbb E_{x\sim\nu_\Gamma}\!\big[\,|V_{\theta_k}(x)-g(x)|^2\,\big]\to 0$.
\end{romannum}
\end{assumption}

Under these assumptions the following viscosity consistency result holds.

\begin{theorem}\label{thm:critic-correct}
Assume \cref{assumps:structure,assump:sampling,assumps:limit}. Let
\[
V_k:=V_{\theta_k}\in C(\overline{\Omega}),\qquad
P_k:=P_{\psi_k},\qquad
\pi_k:=\pi_{\phi_k},
\]
with $V_k|_{\partial\Omega}=g$. Suppose
\[
L_{\text{Visc}}(\theta_k,\psi_k)\to 0,
\qquad
L_{\text{jet}}(\phi_k;\psi_k)-L_{\text{jet}}^\star(\psi_k)\to 0.
\]
Then every uniform limit $V$ of $(V_k)$ belongs to $C(\overline{\Omega})$, satisfies $V|_{\partial\Omega}=g$, and the exact HJB inequalities hold on the limiting symmetric positive definite SPD-envelope jet family of $V$ for
$(\nu_X\otimes\nu_{\mathcal M})$-a.e. $(x,M)$.
More precisely, for $(\nu_X\otimes\nu_{\mathcal M})$-a.e. $(x,M)$ there exist
exact inf-/sup-envelope contacts $\zeta_x,\zeta_x^\star$ of $V$ such that,
whenever $\zeta_x,\zeta_x^\star\in\Omega$, their associated SPD-envelope jets
satisfy
\[
F\big(\zeta_x,V(\zeta_x),M(x-\zeta_x),-M\big)\ge 0,
\qquad
F\big(\zeta_x^\star,V(\zeta_x^\star),-M(x-\zeta_x^\star),M\big)\le 0.
\]
\end{theorem}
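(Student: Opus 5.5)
We pass to a uniformly convergent subsequence $V_k\to V$ (equicontinuity and equiboundedness are \cref{assump:critic-regularity}, so limit points exist by Arzel\`a--Ascoli). We then transfer the vanishing of the sampled penalties to the limit through stability of envelope contacts. Continuity of $V$ is immediate from uniform convergence. The boundary identity $V|_{\partial\Omega}=g$ follows either from $V_k|_{\partial\Omega}=g$ directly, or from \cref{assump:boundary_consistency}: $L^2(\nu_\Gamma)$ convergence together with uniform convergence gives $V=g$ $\nu_\Gamma$-a.e., and since $\operatorname{supp}\nu_\Gamma=\partial\Omega$ and both sides are continuous, $V=g$ everywhere on $\partial\Omega$.

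\textbf{Extracting pointwise vanishing.} Since $L_{\text{Visc}}(\theta_k,\psi_k)\to 0$, the nonnegative integrands $\mathcal J^{\bar\pi_{\phi_k}}_{\text{super/sub}}$ converge to $0$ in $L^1(\nu_X)$. Passing to a further subsequence (diagonalizing also over the i.i.d.\ bank draws of \cref{assump:sampling}), they converge to $0$ for $(\nu_X\otimes\nu_{\mathcal M})$-a.e.\ $(x,M)$. Hence $\limsup_k g_{\text{super}}(x,M;\theta_k,\psi_k)\le 0$ and $\limsup_k g_{\text{sub}}\le 0$ at such points. Similarly, by \cref{lem:jet_gap_decomp} and \cref{assump:actor_gap}, the gap terms $\text{gap}_{\bar\pi_{\phi_k}}(\zeta^b_k,p^b_k,A^b_k)\to 0$ a.e.\ along a subsequence. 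Using the decomposition $F=\mathcal H^{\bar\pi_\phi}+\text{gap}$, we get $-F(\zeta^-_k,V_k(\zeta^-_k),p^-_k,-M)=g_{\text{super}}-\text{gap}\le g_{\text{super}}$, so $\limsup_k\,[-F(\zeta^-_k,\dots)]\le 0$. On the subsolution side, $F=g_{\text{sub}}+\text{gap}$, so $\limsup_k F(\zeta^+_k,\dots)\le 0$; this is exactly where the vanishing gap is needed.

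\textbf{Stability of contacts (main obstacle).} For fixed $(x,M)$, the contacts $\zeta^-_k$ lie in the compact set $\overline\Omega$, so a further subsequence (depending on $(x,M)$) converges to some $\bar\zeta$. Write $\Phi_k(\zeta)=V_k(\zeta)+\tfrac12(x-\zeta)^\top M(x-\zeta)$ and $\Phi$ for the analogue with $V$. Then $\Phi_k\to\Phi$ uniformly, and by \cref{assump:prox-delta-opt}, $\Phi_k(\zeta^-_k)\le\inf\Phi_k+\delta_k$. It follows that $\Phi(\bar\zeta)=\lim\Phi_k(\zeta^-_k)\le\lim(\inf\Phi_k+\delta_k)=\inf\Phi$, so $\bar\zeta=:\zeta_x$ is an exact inf-envelope contact of $V$; the sup case is symmetric and gives $\zeta_x^\star$. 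The delicate points are twofold. First, the subsequence depends on $(x,M)$; this is harmless because the conclusion is pointwise. Second, we need $\zeta^-_k\in\Omega$ eventually when $\zeta_x\in\Omega$, so that the hinge is not switched off by the boundary convention $g=0$; this holds since $\Omega$ is open and $\zeta^-_k\to\zeta_x$. We also need $V_k(\zeta^-_k)\to V(\zeta_x)$, which follows from uniform convergence plus continuity of $V$ (equicontinuity suffices).

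\textbf{Passing to the limit.} With $\zeta^-_k\to\zeta_x\in\Omega$ and $V_k(\zeta^-_k)\to V(\zeta_x)$, we have $p^-_k=M(x-\zeta^-_k)\to M(x-\zeta_x)$ and $A^-=-M$ fixed. Continuity of $F$ (guaranteed by \cref{assump:attainment-continuity}) then gives $-F(\zeta_x,V(\zeta_x),M(x-\zeta_x),-M)=\lim_k[-F(\zeta^-_k,\dots)]\le 0$, i.e.\ the supersolution inequality. The subsolution inequality at $\zeta_x^\star$ with jet $(-M(x-\zeta_x^\star),M)$ follows identically. By \cref{thm:inf-to-subjet}, these are genuine jets of $V$, which justifies calling them the limiting SPD-envelope jet family. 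Intersecting the countably many full-measure sets used for the subsequence extractions yields the a.e.\ statement.
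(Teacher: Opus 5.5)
Your proposal is correct and follows essentially the same route as the paper. The steps match: Arzel\`a--Ascoli, a.e.\ extraction of the hinge and gap terms along a subsequence, stability of $\delta_k$-optimal contacts (the paper's \cref{lem:delta-contact}, which you prove inline), openness of $\Omega$ to keep $\zeta_k^{\pm}$ off the boundary mask, and continuity of $F$ with the gap needed only on the subsolution side. The one step to state precisely is the passage from the bank-maximum integrand to a.e.\ single $(x,M)$: no diagonalization is needed, since $\max_{j}\max\{g_{\text{super},k}(x,M_{k,j}),0\}^2\ge\max\{g_{\text{super},k}(x,M_{k,1}),0\}^2$ with $M_{k,1}\sim\nu_{\mathcal M}$ independent of $x$ gives $L^2(\nu_X\otimes\nu_{\mathcal M})$ convergence directly, exactly as in the paper (your support argument for the learned-boundary case is a small addition the paper omits).
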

\begin{proof}
See \cref{app:critic-correct}.
\end{proof}

\begin{remark}
Consistency is established only on the sampled SPD-envelope jet family. By \cref{cor:envelope-necessary}, any viscosity solution satisfies $\mathcal J(V)=0$, i.e.\ the viscosity inequalities hold on this test family. In general however, the converse may fail since envelope tests generate only jets with second-order part $A=\pm M$, $M\in\mathbb S_{++}(n)$. In practice, sampling $M$ with $\alpha_{\min}I\preceq M\preceq\alpha_{\max}I$ yields uniformly strongly convex/concave kernels and a well-conditioned contact search.
\end{remark}

\begin{remark}
By \cref{thm:jensen-comparison}, any subsequential uniform limit $V$ of $(V_k)$ arising in \cref{thm:critic-correct} that is also a viscosity solution of \eqref{eq:HJB} must coincide with the unique continuous viscosity solution of the Dirichlet problem \eqref{eq:HJB}. In particular, if every subsequential uniform limit of $(V_k)$ is a viscosity solution of \eqref{eq:HJB}, then the entire sequence $V_k$ converges uniformly on \(\overline{\Omega}\) to that solution.
\end{remark}

%%%%%%%%%%%%%%%%%%%%%%%%%%%%%%%%%%%%%%%%%%%%%%%%%%%%%%%%%%%%%%%%%%%%%%%%%%%%%%%%%%%%%%%%

\section{Experiments}\label{sec:experiments}We evaluate V-GAC on four control settings: stochastic rigid-body stabilization and a minimum-time Van der Pol problem with analytically specified dynamics and HJB operators; Gymnasium MuJoCo continuous-control benchmarks \cite{towers2024} as high-dimensional model-free evaluation; and Safety Gymnasium navigation~\cite{ji2023} which provides safety-critical evaluation under perturbations. On the benchmark tasks we compare PPO \cite{schulman2017}, the second-order variant of HJBPPO \cite{mukherjee2023}, and V-GAC. Unlike the Euler and Van der Pol examples, the MuJoCo and Safety Gymnasium benchmarks are treated as continuing discounted control problems on noncompact state spaces, with finite episode truncations used for simulation and evaluation. For these benchmarks, the coefficients in \eqref{eq:Hpi-def} are not available in closed form; instead, \(f\) and \(a\) are estimated from one-step simulator increments by local first and second-moment matching, and \(\ell\) is taken from the environment. These estimates define the policy-conditioned operator used in the benchmark losses and diagnostics.  

All experiments use $\lambda_{\text{TD}}=1/2$, except for the Van der Pol problem, which uses $\lambda_{\text{TD}}=1$. The target-value recursion of
\cref{sec:method} is using generalized advantage estimation (GAE \cite{schulman2017})
on the cost side. Let $(x_t,\,c_t,\,\Delta\ell_t,\,\hat V_t,\,\hat{A}_t)_{t=0}^{S-1}$ denote a rollout
segment, where $\Delta\ell_t$ is the one-step discretized cost contribution
entering the update, and let $m_t\in\{0,1\}$ denote the continuation mask used
in the recursion. Then the target values satisfy
\[
\hat V_t
=\Delta\ell_t+
\gamma m_t\Bigl((1-\lambda_{\text{GAE}})V_\theta(x_{t+1})
+\lambda_{\text{GAE}}\hat V_{t+1}\Bigr),
\qquad
\hat V_S:=V_\theta(x_S),
\]
or, equivalently,
\[
\hat V_t
=V_\theta(x_t)+
\sum_{l=0}^{S-t-1}
(\gamma\lambda_{\text{GAE}})^l
\Bigl(\prod_{j=0}^{l-1}m_{t+j}\Bigr)\Bigl(\Delta\ell_{t+l}
+\gamma m_{t+l}V_\theta(x_{t+l+1})
- V_\theta(x_{t+l})\Bigr).
\]

For PPO and HJBPPO, we use the hyperparameters reported in \cite{mukherjee2023}, following the default configurations wherever applicable.
For the Euler and Van der Pol examples, all reported quantities are computed from on-policy rollouts during training. For the benchmark tasks, evaluation metrics reported in tables are averaged over 150 evaluation episodes per seed. We conclude the experimental section with a cross-benchmark endpoint summary under nominal and perturbed evaluation. Detailed experimental parameters are reported in \cref{sec:sm-hparams}.

\subsection{Euler Dynamics}\label{exp:euler}
The first example we evaluate V-GAC on is a discounted stochastic stabilization problem for a rigid body in principal axes. 
The state is the angular velocity $X_t=\omega(t)\in\R^{3}$ and the control is a bounded torque $u(t)\in U=[-15,15]^3$. 
The controlled It\^o dynamics are
\[
d\omega_t = f(\omega_t,u_t)\,dt + \sigma\,dW_t,\qquad \sigma=0.05\,I_3,
\]
with drift
\[
f(\omega,\tau)=\begin{bmatrix}
\frac{I_2-I_3}{I_1}\omega_2\omega_3+\frac{u_1}{I_1}\\[2pt]
\frac{I_3-I_1}{I_2}\omega_3\omega_1+\frac{u_2}{I_2}\\[2pt]
\frac{I_1-I_2}{I_3}\omega_1\omega_2+\frac{u_3}{I_3}
\end{bmatrix},\qquad (I_1,I_2,I_3)=(1,2,3).
\]
The running cost is $\ell(\omega,\tau)=\sum_{i=1}^3 \omega_i^2 + 0.1\sum_{i=1}^3 u_i^2$, with discount $\beta=0.8$. 
Episodes evolve on $\Omega=\{\|\omega\|\le 5\}\setminus\mathcal{T}$ with $\mathcal{T}=\{\|\omega\|\le 5\times 10^{-3}\}$ giving the underlying measure-zero target positive sampling mass so that the critic can learn its terminal value.\footnote{This auxiliary target-region
supervision is an implementation choice used for numerical stabilization instead of $\Gamma=\partial\mathcal{T}$; it does not alter the PDE formulation.} The boundary data are $g=0$ on $\|\omega\|=5\times 10^{-3}$ and exiting $\|\omega\|>5$ incurs a terminal penalty. Trajectories are simulated by Euler--Maruyama with $\Delta t=10^{-3}$. 
Since $a=\sigma^2 I_3$ is strictly positive definite, the associated HJB is uniformly elliptic.

\begin{figure}[htbp]
\centering
\includegraphics[width=0.8\textwidth]{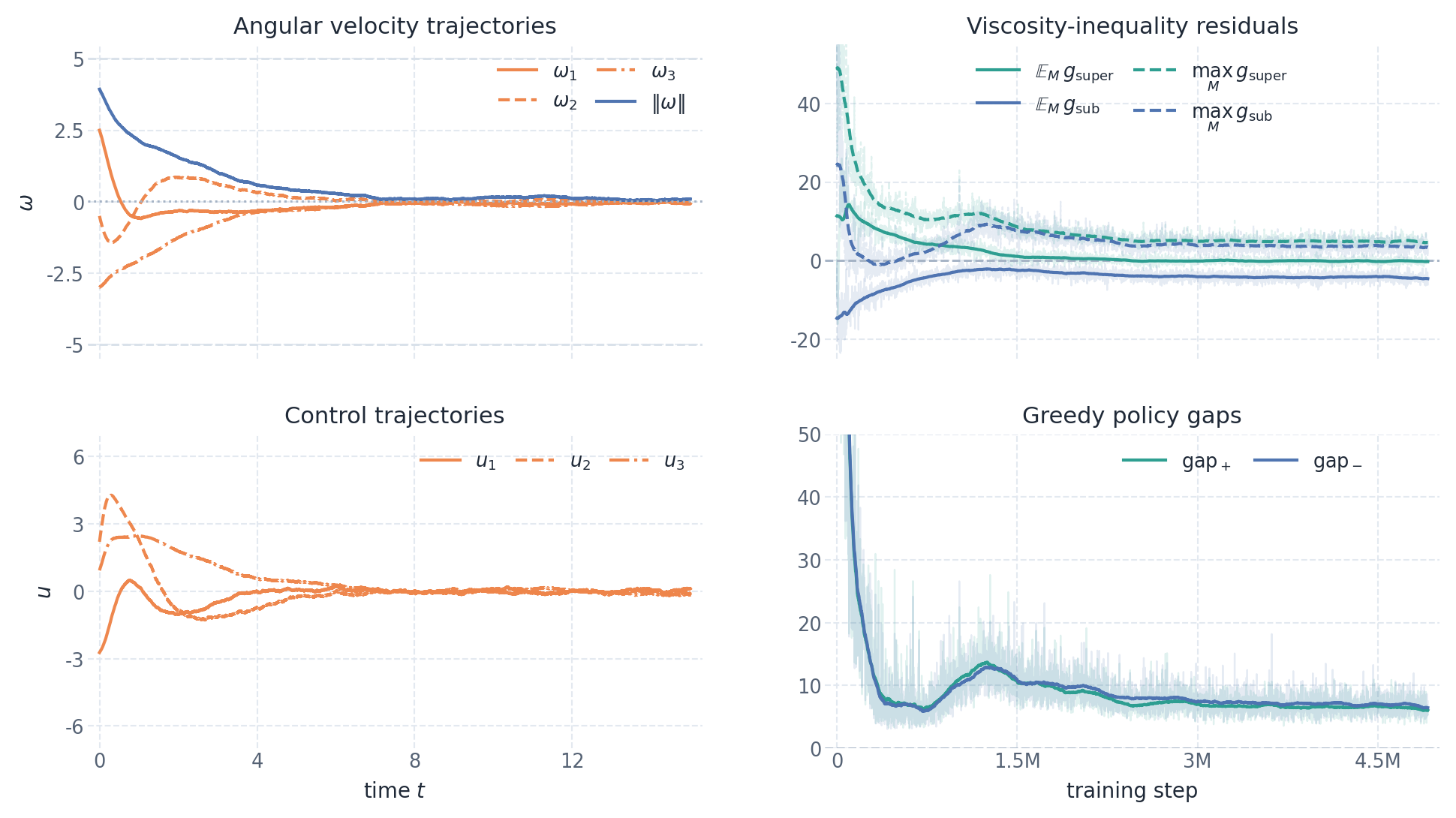}
\caption{Stochastic rigid-body stabilization. The figure shows a sample state trajectory in angular-velocity coordinates $\omega_1,\omega_2,\omega_3$ and $\|\omega\|$, initialized at $\omega(0)=[2.5,-0.5,-3.0]$ (top left), together with the corresponding control trajectory $u=(u_1,u_2,u_3)$ (bottom left). The viscosity-inequality residuals $\mathbb{E}_M g_{\text{super}}$, $\mathbb{E}_M g_{\text{sub}}$ and their $\max_M$ analogues are shown in the top-right panel, while the greedy policy gaps $\text{gap}_{+}$ and $\text{gap}_{-}$, defined in \eqref{eq:gap} and averaged over the sup- and inf-envelope jet families, respectively, are shown in the bottom-right panel.}
\label{fig:euler_traj_violations}
\end{figure}

The main complication here, as throughout the experiments, is that the critic $V_{\theta}$ is trained against a policy-conditioned HJB operator rather than the exact minimizing Hamiltonian. Therefore, reducing the critic loss alone does not guarantee consistency with the true HJB equation; consistency additionally requires the greedy gap to be small at the sampled jets. In \cref{fig:euler_traj_violations}, the quantities \(g_{\text{super}}\) and \(g_{\text{sub}}\) measure violations of the supersolution and subsolution viscosity inequalities at the envelope-generated jets. Therefore, their decay toward zero, both on average and in the worst sampled direction, indicates that the critic better satisfies the policy-conditioned viscosity inequalities on the sampled SPD-envelope jet family. The greedy gaps play a different role: a small gap means that the actor is nearly minimizing the Hamiltonian at those same jets, so the policy-conditioned operator is close to the true HJB operator there. 

\begin{figure}[htbp]
\centering
\includegraphics[width=\textwidth]{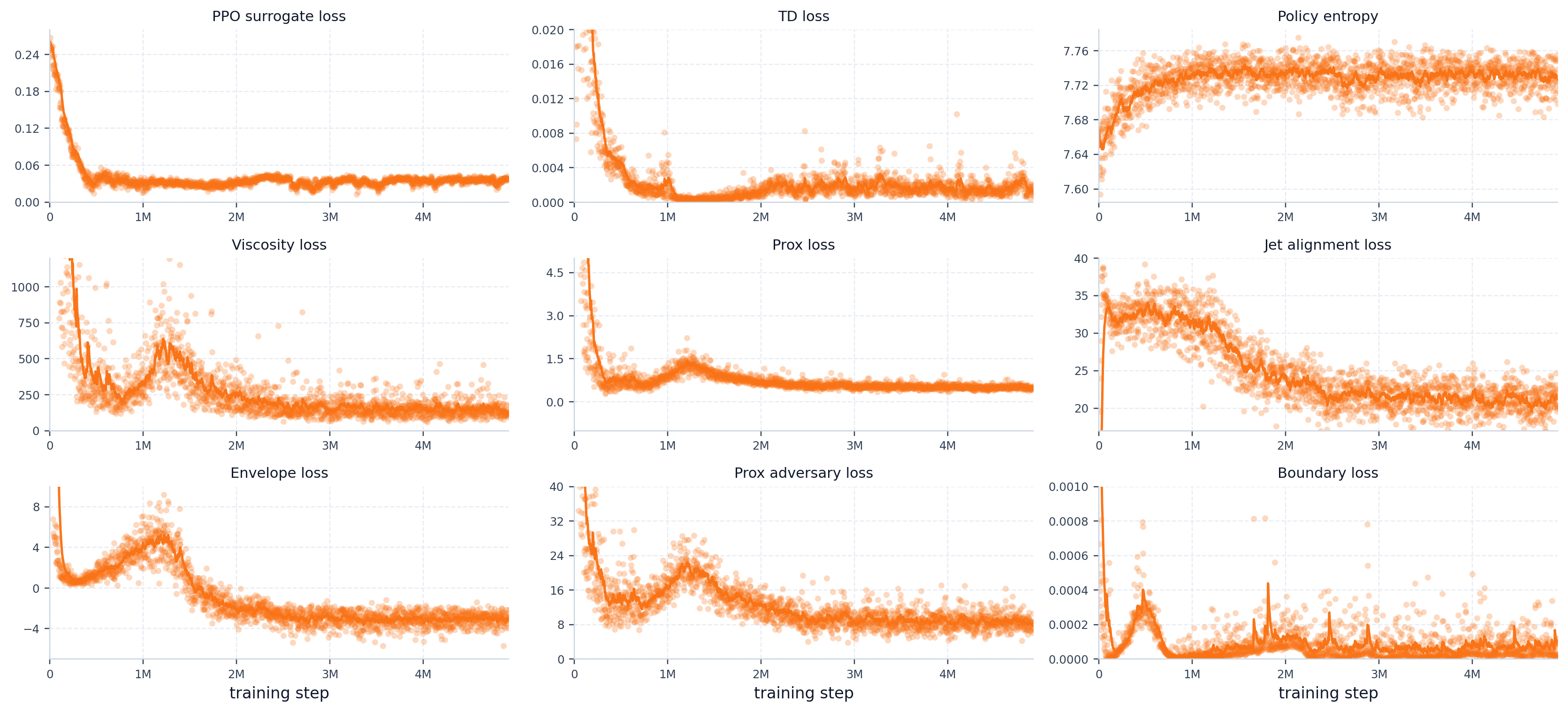}
\caption{Training losses for the stochastic rigid-body stabilization experiment. The panels show the policy-surrogate loss, TD loss, policy entropy, viscosity loss, proximal-optimality loss, and jet-alignment loss, together with the raw envelope term, proximal adversary term, and boundary loss, all plotted against environment steps. Points denote logged values and solid curves denote moving averages.}
\label{fig:euler_losses}
\end{figure}

In \cref{fig:euler_losses}, the viscosity loss and proximal-optimality loss are initially large and then decrease substantially after a secondary transient, reflecting improved satisfaction of the PDE-side and contact-optimality constraints. The bottom-row panels provide additional context for this behavior. The raw proximal adversary term follows the viscosity loss, the boundary loss remains close to zero after a short transient, and the envelope term settles after the initial search phase. Since the envelope term is not sign-definite, its stabilization is more informative than its sign alone.

\subsection{Minimum time Van der Pol oscillator}\label{exp:vdp}

We next consider Problem 2 of \cite{cristiani2010}, namely the controlled Van der Pol oscillator
\[
\dot y_1 = y_2,\qquad
\dot y_2 = -y_1 + y_2(1-y_1^2)+u,\qquad U=[-1,1].
\]
This is a deterministic minimum-time problem with bang-bang optimal control. As in \cite{cristiani2010}, we work on the truncated box $[-2,2]^2$, but for rollout termination we replace the singleton target by a ball of radius 0.05 centered at the origin. Rather than learning the time-to-target function $T$ directly, we learn the scaled Kru\v{z}kov transform $v(y)=1-e^{-\beta T(y)}$ with $\beta=0.1$, so that
\[
\beta v(y)-\inf_{c\in[-1,1]}\{\beta+f(y,c)\cdot\nabla v(y)\}=0
\quad\text{in }[-2,2]^2 \setminus \mathcal{T},
\]
with $\mathcal{T}=\{\|y  \|\le 5\times 10^{-2}\}$ and $v=0$ on $\mathcal{T}$. Since $\beta>0$, the transformed operator is strictly proper in the sense of \cref{def:structure}. Leaving $\Omega$ before reaching the target is treated as terminal failure, corresponding to transformed value $1$ on exit states. Accordingly, the critic is parameterized in transformed coordinates, $v_\theta(y)\in[0,1)$, and the boundary terms enforce $v_\theta\approx 0$ on the target and $v_\theta\approx 1$ on exit states. Rollouts are deterministic and are integrated by a fourth-order Runge--Kutta (RK4) scheme with macro-step $\Delta t=0.05$ and internal step $10^{-3}$. Over a realized step length $\Delta t$, since $\ell(y,u)\equiv \beta$, the total one-step cost contribution is
\[
\Delta\ell
=\bigl(1-e^{-\beta\Delta t}\bigr)
+\mathbf{1}_{\{\tau_{\Omega}<\tau_{\mathcal T}\}}e^{-\beta\Delta t}.
\]
Thus an outer-boundary hit yields a one-step cost contribution of $1$, whereas target hits incur no exit penalty. Rollout actions are sampled from the Gaussian policy, while the viscosity, proximal, and jet losses evaluate the policy-conditioned Hamiltonian at the deterministic feedback map $\bar\pi_\phi(y)=u_{\max}\tanh(\mu_\phi(y))$. Time-limit rollout truncations are bootstrapped rather than imposed as terminal boundary data.

\begin{figure}[htbp]
\centering
\includegraphics[width=0.85\textwidth]{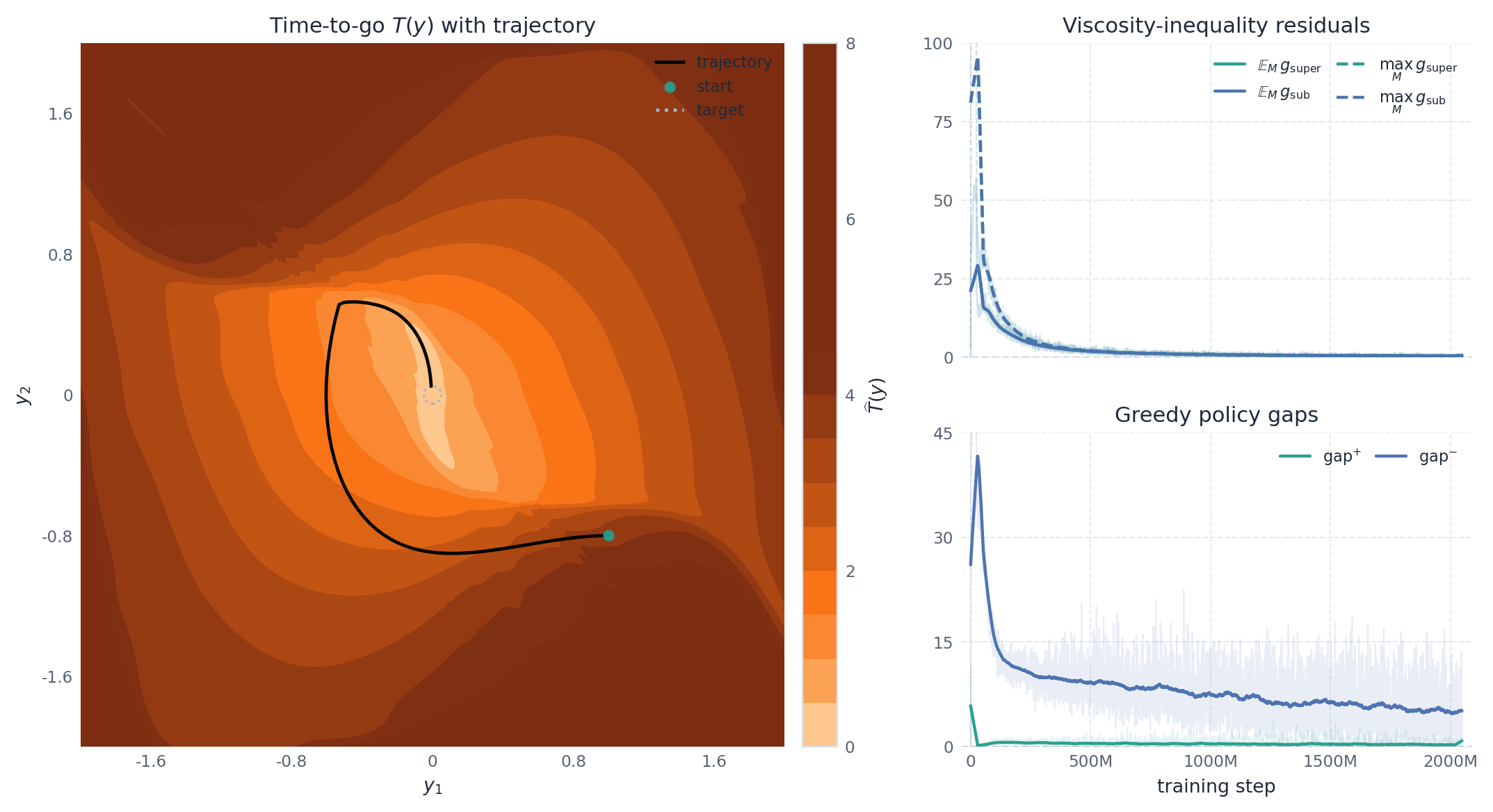}
\caption{Gaussian-actor results for the minimum-time Van der Pol problem. Left: reconstructed time-to-go $T(y)=-\beta^{-1}\log(1-v_\theta(y))$ on $[-2,2]^2$, together with a sampled closed-loop trajectory from $y(0)=[1,-0.8]$ to the target (origin). Right top: mean and worst-case viscosity-inequality residuals over the sampled jet family during training. Right bottom: corresponding policy gaps.}
\label{fig:vdp_gaussian_violations}
\end{figure}

\cref{fig:vdp_gaussian_violations} shows that the learned time-to-go field exhibits a locus of nondifferentiability across the state plane, consistent with a switching curve. Although the Gaussian actor is continuous, its mean action saturates near the control bounds away from a narrow transition layer, so the policy can approximate the underlying bang--bang structure while preserving the associated nonsmooth feature in the value function. The mean and worst-case viscosity residuals decay rapidly while the policy gaps decrease over training, indicating that the critic approaches sampled viscosity consistency. For the displayed trajectory, the simulated time-to-go is $\tau=3.814$, compared with the Pontryagin maximum principle value $3.837$ reported in \cite{cristiani2010}.

We computed a grid-based reference solution using a semi-Lagrangian scheme \cite{bardi1997} to obtain a reference solution, and evaluated the error in the value function $T$ on 3999 reachable nodes (excluding the target set), obtaining $L^2_{\mathrm{rms}}=5.568\times10^{-1}$, $L^\infty=8.072\times10^{-1}$, relative $L^2=1.728\times10^{-1}$, and relative $L^\infty=1.907\times10^{-1}$.
The reference solver used a time step of $5\times10^{-3}$, natural-neighbor interpolation on a quasi-uniform simplex grid, and $\beta_{\text{ref}}=1$ so that $T_{\mathrm{ref}}(y) = -\log\big(1 - v_{\mathrm{ref}}(y)\big)$.

\begin{figure}[htbp]
\centering
\includegraphics[width=\textwidth]{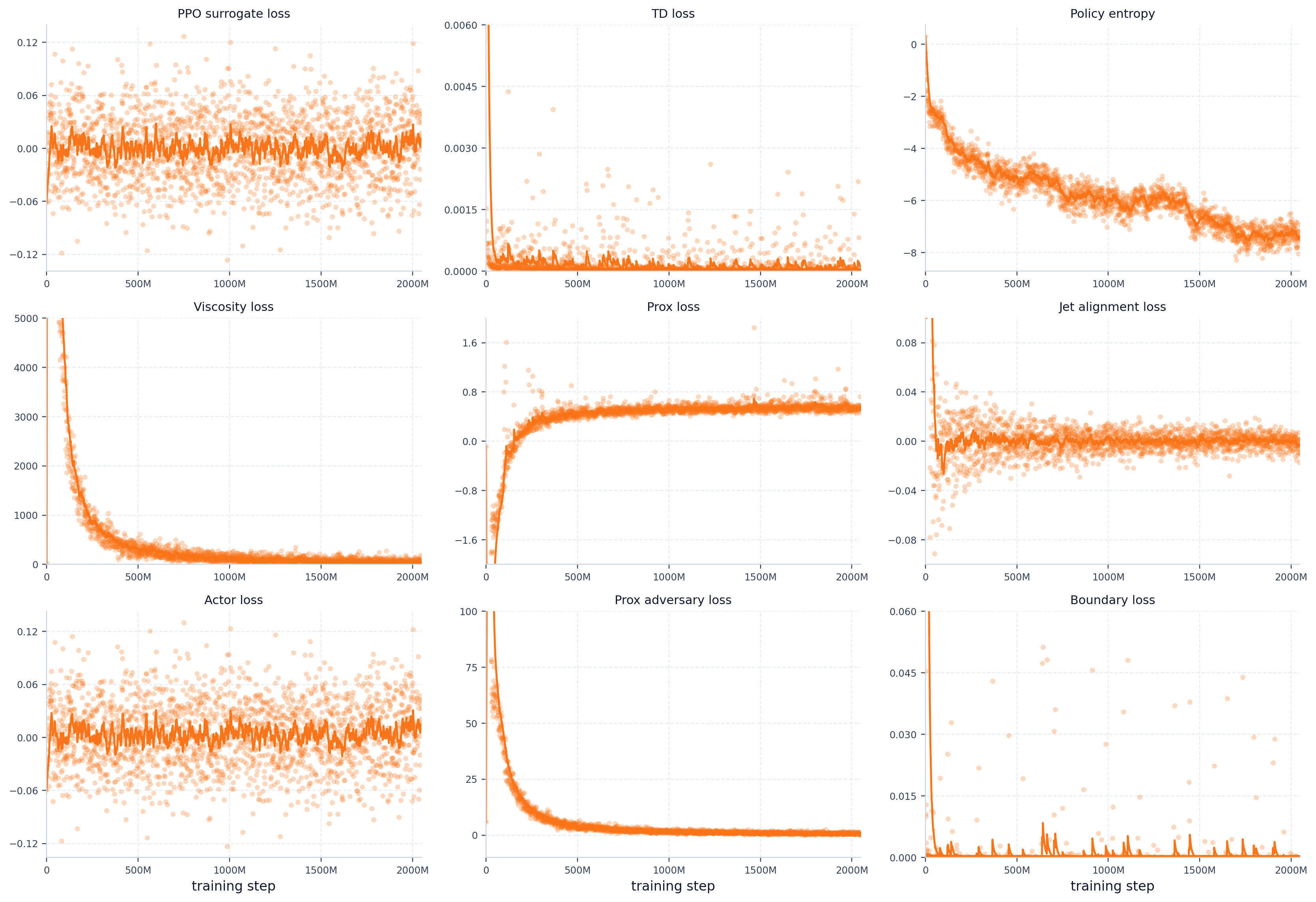}
\caption{Training losses for the time-optimal Van der Pol problem. Top row: PPO surrogate loss, TD loss, and policy entropy. Middle row: viscosity loss, total prox loss, and jet-alignment loss. Bottom row: total actor loss, raw proximal adversary term, and boundary loss. Light markers show raw logged values and solid curves show moving averages.}
\label{fig:vdp_gaussian_losses}
\end{figure}

Turning to the training losses, \cref{fig:vdp_gaussian_losses} shows that the critic-side quantities stabilize rapidly. The TD loss, viscosity loss, and raw proximal adversary term all decrease substantially, while the boundary loss remains close to zero after a short transient. Since the envelope term is disabled in this run, the total prox loss reflects the balance between the adversarial term and the proximal-optimality regularizer. Thus, its sign is less informative than the decay of the raw adversary term.

\subsection{MuJoCo locomotion and high-dimensional control}\label{exp:loco}The MuJoCo suite spans increasingly contact-sensitive locomotion tasks:
HalfCheetah-v5, Hopper-v5, Walker2d-v5, and Ant-v5. We also include
Humanoid-v5 as a higher-dimensional stress test where stable forward
motion requires coordinated whole-body control and repeated balance
recovery \cite{towers2024}. In this setting, the main empirical question is whether the viscosity-informed critic yields smaller mean greedy gaps and improved robustness under off-manifold perturbations.  The reported quantities are empirical averages or maxima of sampled envelope-jet hinge violations and greedy gaps, and therefore serve as surrogate consistency measures rather than exact norms of viscosity error. The reported greedy gap is the average of the sup- and inf-envelope gap values.

During noisy evaluation, each step is perturbed by an Euler--Maruyama increment $\sigma_{\text{dyn}}\sqrt{\Delta t}\,\xi_k$, $\xi_k\sim\mathcal{N}(0,I)$; nominal evaluation uses $\sigma_{\text{dyn}}=0$. To isolate the effect of the critic objective, all methods use the same PPO backbone and training configuration within each benchmark; HJBPPO uses a strong-form HJB residual penalty, whereas V-GAC uses the viscosity-based objective of \cref{sec:visc-loss}. Unless otherwise noted, all results are reported as mean $\pm$ standard deviation over independent seeds. All reported diagnostic trajectories in \cref{fig:critic_diagnostics_mujoco,fig:training_losses_mujoco,fig:critic_diagnostics_safety,fig:training_losses_safety} are computed under Brownian-perturbed dynamics.

\begin{table}[htbp]
\centering
\caption{MuJoCo locomotion and humanoid control: episodic return (mean $\pm$ std), evaluated under nominal dynamics and Brownian-perturbed dynamics ($\sigma_{\text{dyn}}=0.10$ in normalized coordinates). Higher is better. MuJoCo locomotion results are averaged over 20 seeds; Humanoid-v5 results are averaged over 10 seeds. $n$ represents the number of dimensions for each problem.}
\label{tab:mujoco_returns}
\setlength{\tabcolsep}{5pt}
\resizebox{\linewidth}{!}{%
\begin{tabular}{@{}lcccccc@{}}
\hline
& \multicolumn{2}{c}{PPO} & \multicolumn{2}{c}{HJBPPO} & \multicolumn{2}{c}{V-GAC} \\
Task & Nominal & Noisy & Nominal & Noisy & Nominal & Noisy \\
\hline
HalfCheetah-v5 ($n=17$) & 1813 $\pm$ 311 & 1105 $\pm$ 137 & 1832 $\pm$ 121 & 1259 $\pm$ 262 & 1831 $\pm$ 102 & 1784 $\pm$ 130 \\
Hopper-v5 ($n=11$)      & 2120 $\pm$ 312 & 1511 $\pm$ 176 & 1990 $\pm$ 117 & 1699 $\pm$ 332 & 2003 $\pm$ 100 & 1968 $\pm$ 146 \\
Walker2d-v5 ($n=17$)    & 2388 $\pm$ 298 & 1566 $\pm$ 229 & 2412 $\pm$ 338 & 1710 $\pm$ 331 & 2341 $\pm$ 183 & 2263 $\pm$ 311 \\
Ant-v5 ($n=105$)        & 1113 $\pm$ 98  & 321 $\pm$ 97  & 1105 $\pm$ 86  & 719 $\pm$ 79  & 1080 $\pm$ 38  & 998 $\pm$ 74 \\
Humanoid-v5 ($n=348$)   & 5342 $\pm$ 618 & 2216 $\pm$ 487 & 5498 $\pm$ 572 & 3164 $\pm$ 529 & 5579 $\pm$ 541 & 4931 $\pm$ 566 \\
\hline
\end{tabular}%
}
\end{table}

\begin{figure}[htbp]
\centering
\includegraphics[width=\textwidth]{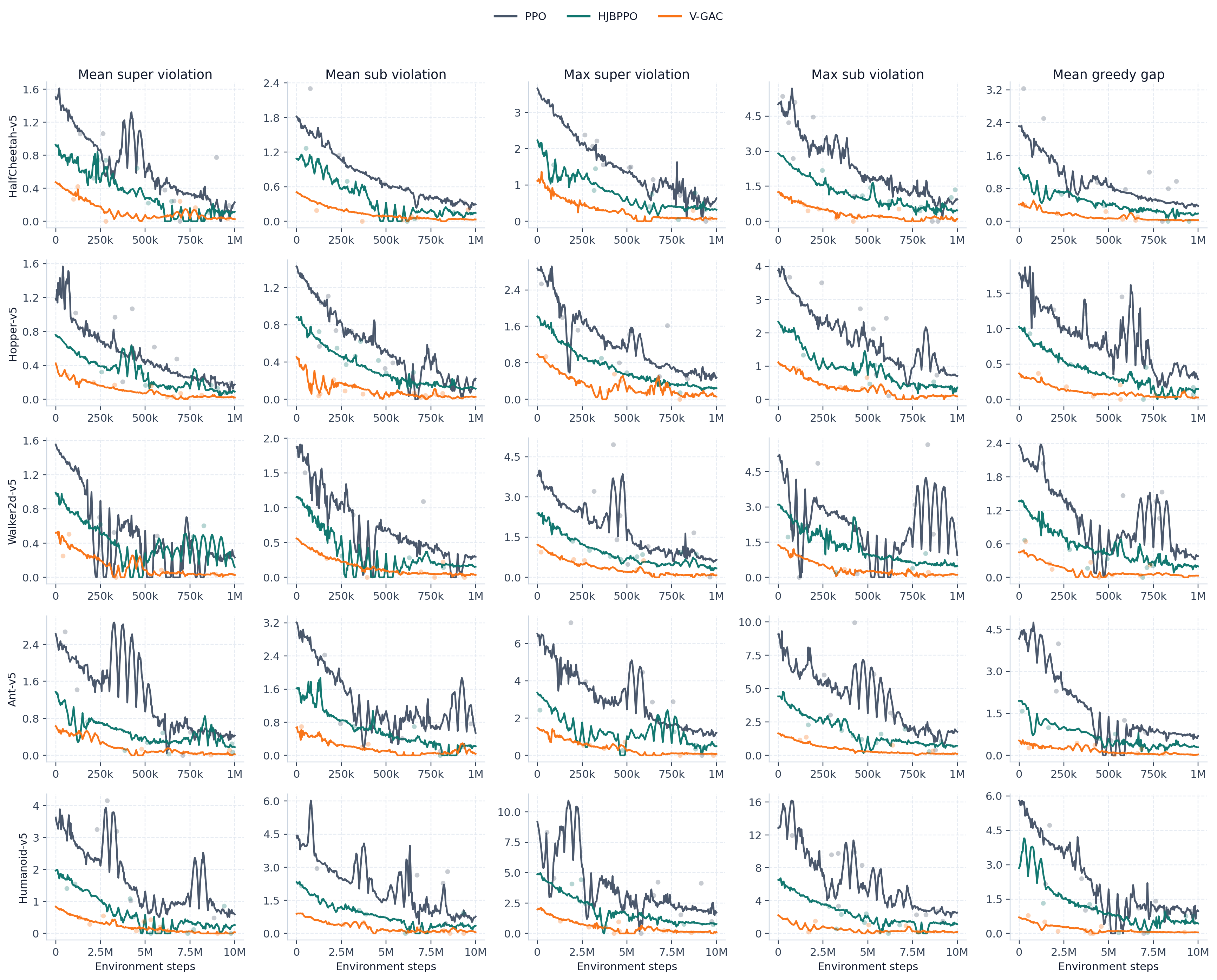}
\caption{Sampled envelope-jet diagnostic metrics over training for the MuJoCo locomotion and humanoid tasks across PPO, HJBPPO, and V-GAC.}
\label{fig:critic_diagnostics_mujoco}
\end{figure}

\cref{tab:mujoco_returns} shows broadly matched nominal returns but much smaller degradation under Brownian-perturbed evaluation for V-GAC, with the clearest gains on the more contact-sensitive and higher-dimensional tasks, especially Humanoid-v5. One plausible interpretation is that the envelope-jet tests used by V-GAC become more consequential in strongly coupled systems, where small local critic-gradient errors can have larger closed-loop effects. \cref{fig:critic_diagnostics_mujoco} shows that V-GAC reduces the sampled envelope-jet violations faster and with fewer rebounds, with HJBPPO typically intermediate and PPO the most erratic. \cref{fig:training_losses_mujoco} suggests that the shared actor-critic losses alone do not explain the robustness gap.

\begin{figure}[htbp]
\centering
\includegraphics[width=1.00135\textwidth]{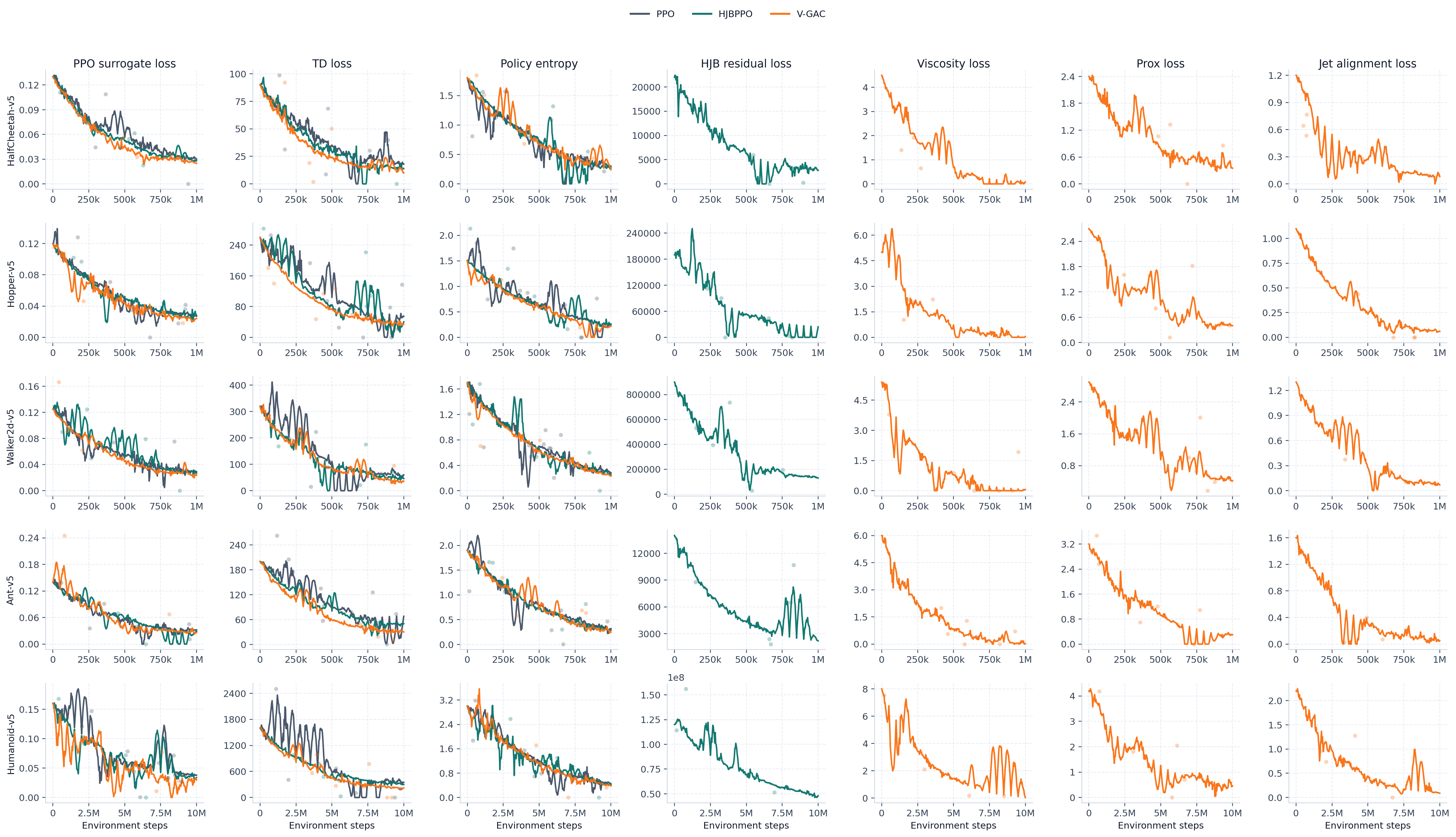}
\caption{Training-loss trajectories for the MuJoCo locomotion and humanoid tasks.}
\label{fig:training_losses_mujoco}
\end{figure}

\begin{figure}[htbp]
\centering
\includegraphics[width=\textwidth]{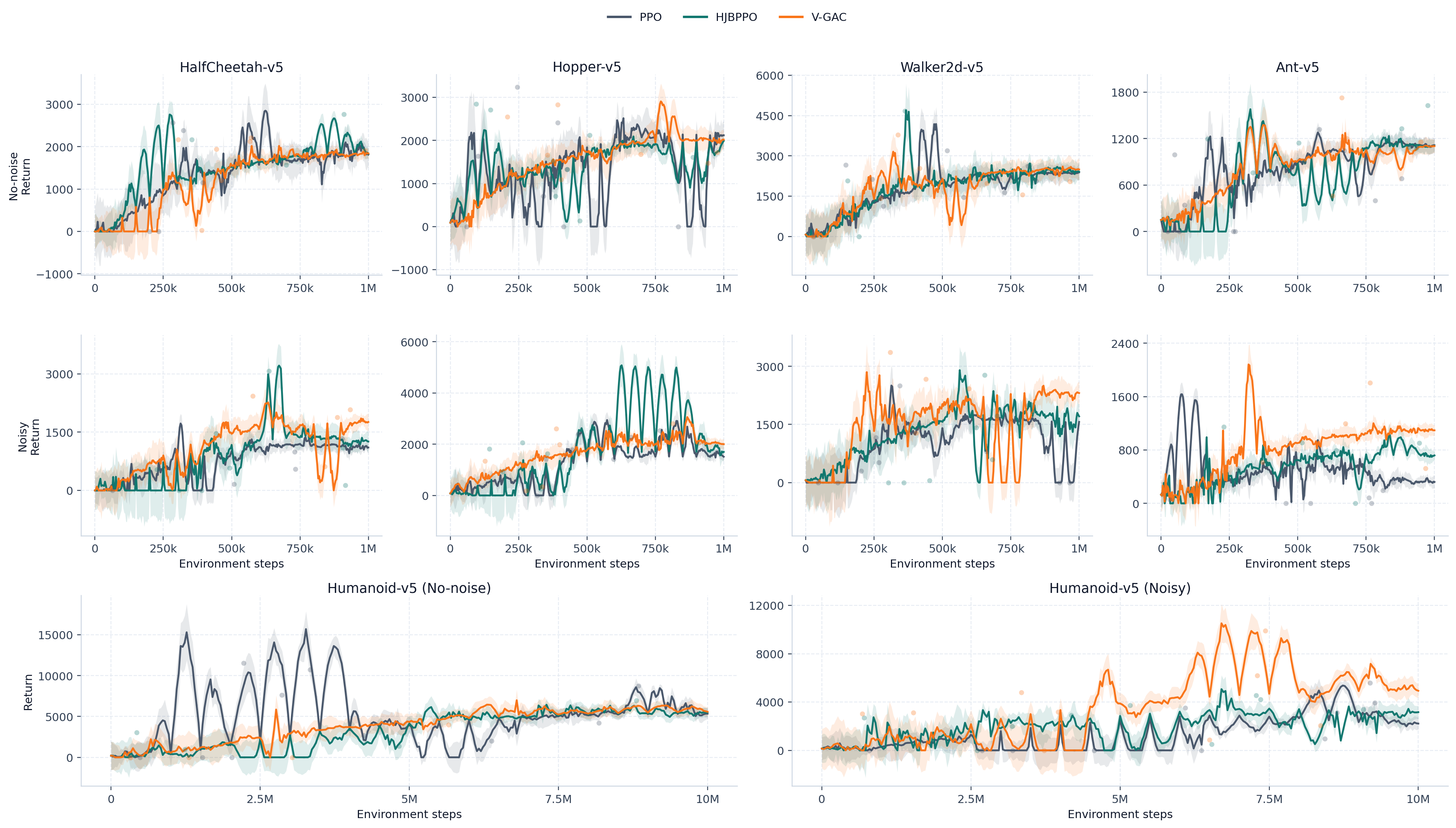}
\caption{Learning curves for HalfCheetah-v5, Hopper-v5, Walker2d-v5, Ant-v5, and Humanoid-v5 under nominal (labeled ``no-noise'') and noisy evaluation. For HalfCheetah-v5, Hopper-v5, Walker2d-v5, and Ant-v5, the upper and middle rows show nominal and noisy evaluation, respectively; the bottom row shows Humanoid-v5 under nominal and noisy evaluation.}
\label{fig:mujoco_learning_curves}
\end{figure}

\cref{fig:mujoco_learning_curves} is consistent with \cref{tab:mujoco_returns}. In the nominal setting, the three methods attain comparable returns by the end of training, whereas under noisy evaluation, the separation becomes more pronounced with V-GAC typically exhibiting the smallest degradation from nominal to noisy performance. This contrast is clearest in Humanoid-v5, where the nominal curves largely overlap by the end of training, while the noisy curves separate more visibly.

\subsection{Safety Gymnasium navigation}\label{exp:safety}For safety-critical navigation, we benchmark on Safety Gymnasium environments \cite{ji2023}, which provide both a reward and a cost signal at each step. For comparison across methods, we optimize the scalarized running cost
\begin{equation*}
\ell(x,c)= -R_{\text{mdp}} + \lambda_{\kappa}\,\kappa(x,c),
\end{equation*}
where $R_{\text{mdp}}$ is the environment reward and $\kappa$ is the Safety Gymnasium cost; $\ell$ is used in the discounted objective \eqref{eq:J}. In addition to the scalarized objective, we report the undiscounted episode cost and the violation rate.

\begin{table}[htbp]
\centering
\caption{Safety Gymnasium: episode reward and cost (mean $\pm$ std over 15 seeds) under nominal and Brownian-perturbed dynamics ($\sigma_{\text{dyn}}=0.10$ in normalized coordinates). Cost-limit: $\kappa_{\max}=25$.}
\label{tab:safetygym}
\setlength{\tabcolsep}{4pt}
\resizebox{\linewidth}{!}{%
\begin{tabular}{llcccccc}
\hline
Environment & Method
& \multicolumn{2}{c}{Reward}
& \multicolumn{2}{c}{Cost}
& \multicolumn{2}{c}{Violation (\%)} \\
& & Nominal & Noisy & Nominal & Noisy & Nominal & Noisy \\
\hline
\multirow{3}{*}{SafetyPointGoal1-v0}
& PPO    & 36 $\pm$ 4 & 13 $\pm$ 6 & 75 $\pm$ 4 & 134 $\pm$ 8 & 9 $\pm$ 1 & 57 $\pm$ 6 \\
& HJBPPO & 37 $\pm$ 4 & 15 $\pm$ 6 & 78 $\pm$ 3 & 133 $\pm$ 6 & 8 $\pm$ 1 & 53 $\pm$ 7 \\
& V-GAC  & 36 $\pm$ 3 & 34 $\pm$ 4 & 74 $\pm$ 4 & 79 $\pm$ 6 & 9 $\pm$ 1 & 12 $\pm$ 2 \\
\hline
\multirow{3}{*}{SafetyPointPush1-v0}
& PPO    & 25 $\pm$ 5 & 9 $\pm$ 6 & 69 $\pm$ 3 & 121 $\pm$ 5 & 11 $\pm$ 2 & 47 $\pm$ 2 \\
& HJBPPO & 27 $\pm$ 5 & 11 $\pm$ 6 & 67 $\pm$ 2 & 118 $\pm$ 5 & 13 $\pm$ 1 & 44 $\pm$ 3 \\
& V-GAC  & 31 $\pm$ 4 & 29 $\pm$ 5 & 70 $\pm$ 2 & 73 $\pm$ 4 & 11 $\pm$ 3 & 15 $\pm$ 4 \\
\hline
\multirow{3}{*}{SafetyCarGoal1-v0}
& PPO    & 30 $\pm$ 2 & 11 $\pm$ 1 & 60 $\pm$ 8 & 118 $\pm$ 5 & 7 $\pm$ 1 & 44 $\pm$ 1 \\
& HJBPPO & 31 $\pm$ 2 & 15 $\pm$ 2 & 63 $\pm$ 7 & 127 $\pm$ 5 & 6 $\pm$ 1 & 40 $\pm$ 1 \\
& V-GAC  & 29 $\pm$ 2 & 28 $\pm$ 2 & 63 $\pm$ 7 & 71 $\pm$ 4 & 8 $\pm$ 1 & 13 $\pm$ 2 \\
\hline
\end{tabular}
}
\end{table}

\begin{figure}[htbp]
\centering
\includegraphics[width=\textwidth]{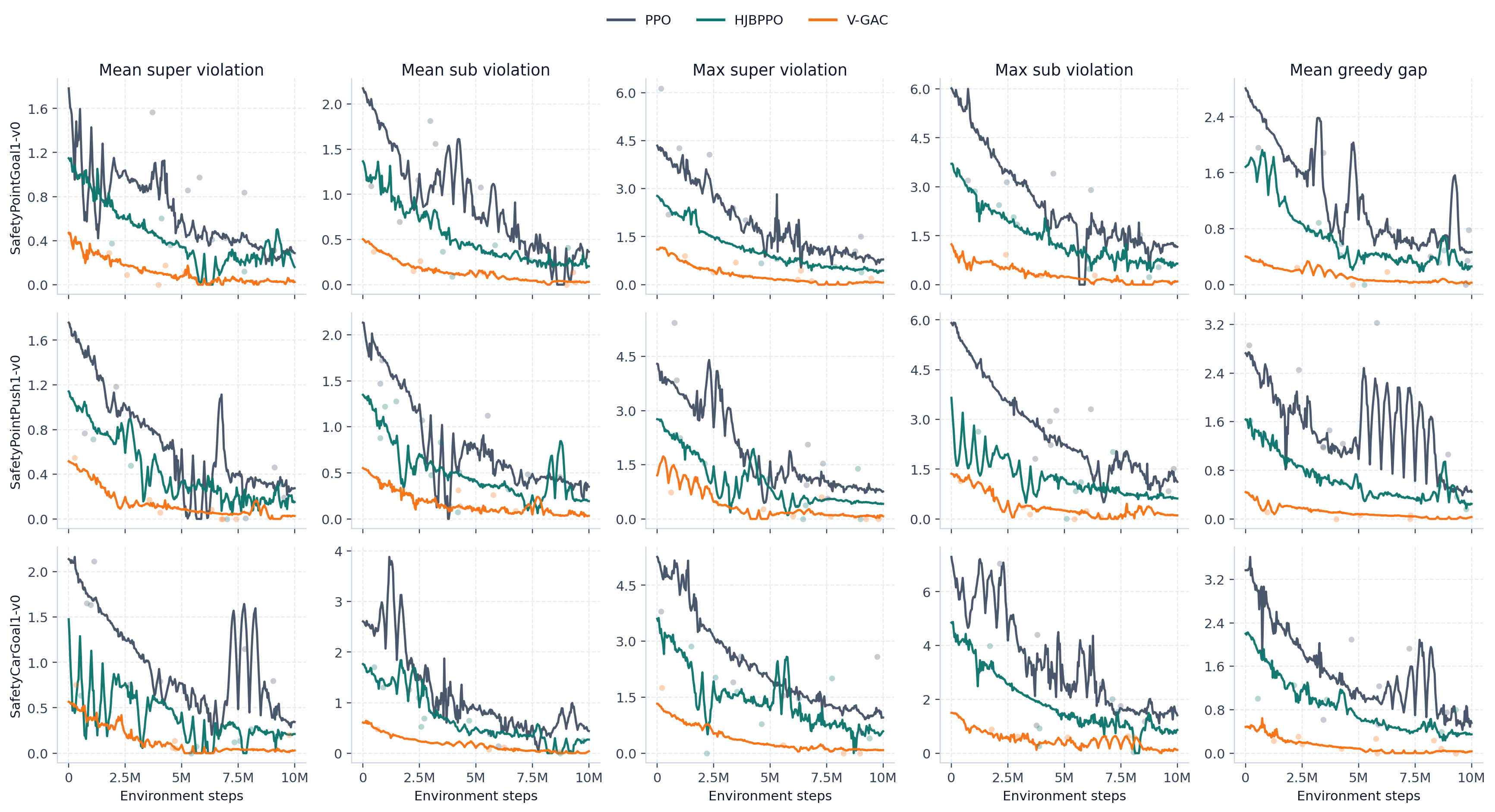}
\caption{Sampled envelope-jet diagnostic metrics over training for the Safety Gymnasium environments across the three methods.}
\label{fig:critic_diagnostics_safety}
\end{figure}
\begin{figure}[htbp]
\centering
\includegraphics[width=\textwidth]{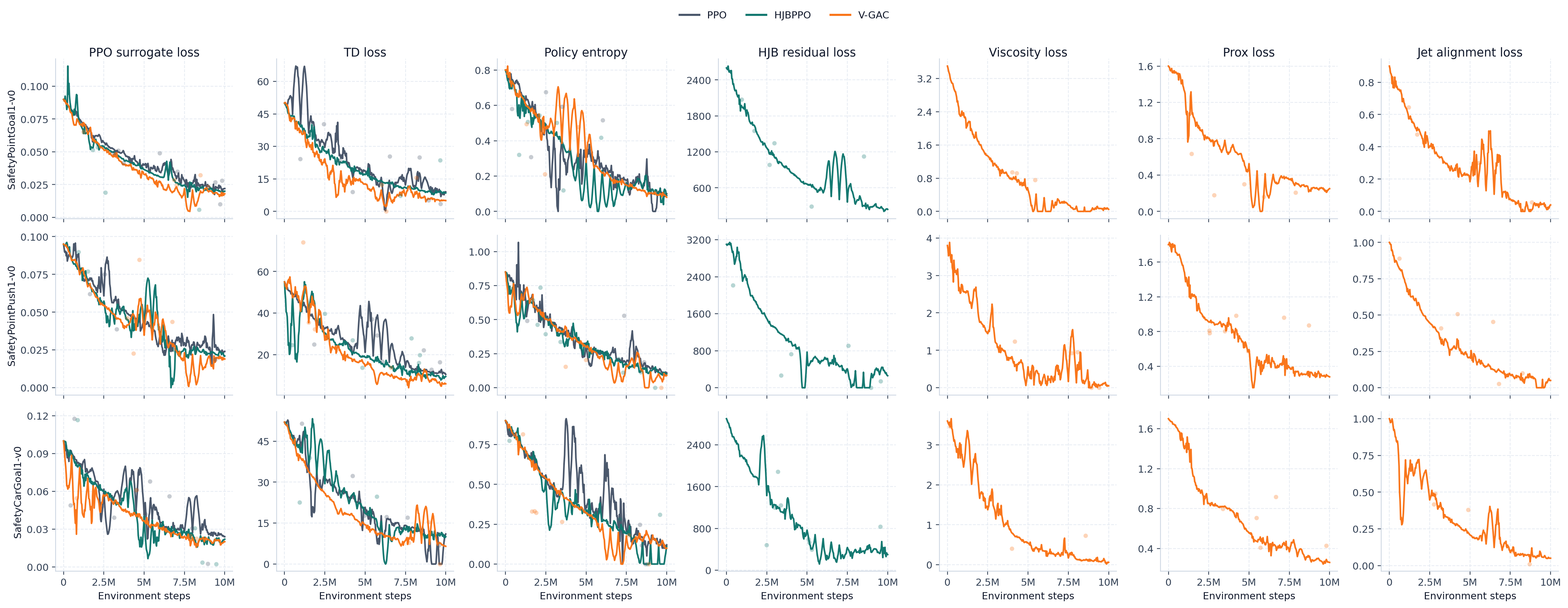}
\caption{Training-loss trajectories for the Safety Gymnasium environments across the three methods.}
\label{fig:training_losses_safety}
\end{figure}

In the nominal setting, \cref{tab:safetygym} shows broadly similar reward--cost operating points across methods, suggesting that the noisy-evaluation advantage of V-GAC is not solely due to a more conservative nominal operating point. In parallel, \cref{fig:critic_diagnostics_safety} shows that V-GAC drives the diagnostics down more steadily, while PPO and HJBPPO exhibit longer plateaus and larger rebounds. This diagnostic behavior is informative because the largest performance differences emerge under perturbed dynamics. Similarly, \cref{fig:training_losses_safety} shows that the shared losses alone do not separate the methods strongly. One empirical difference is that V-GAC continues to reduce its viscosity-specific diagnostics while maintaining stronger reward-safety performance under noisy evaluation.

\begin{figure}[htbp]
\centering
\includegraphics[width=\textwidth]{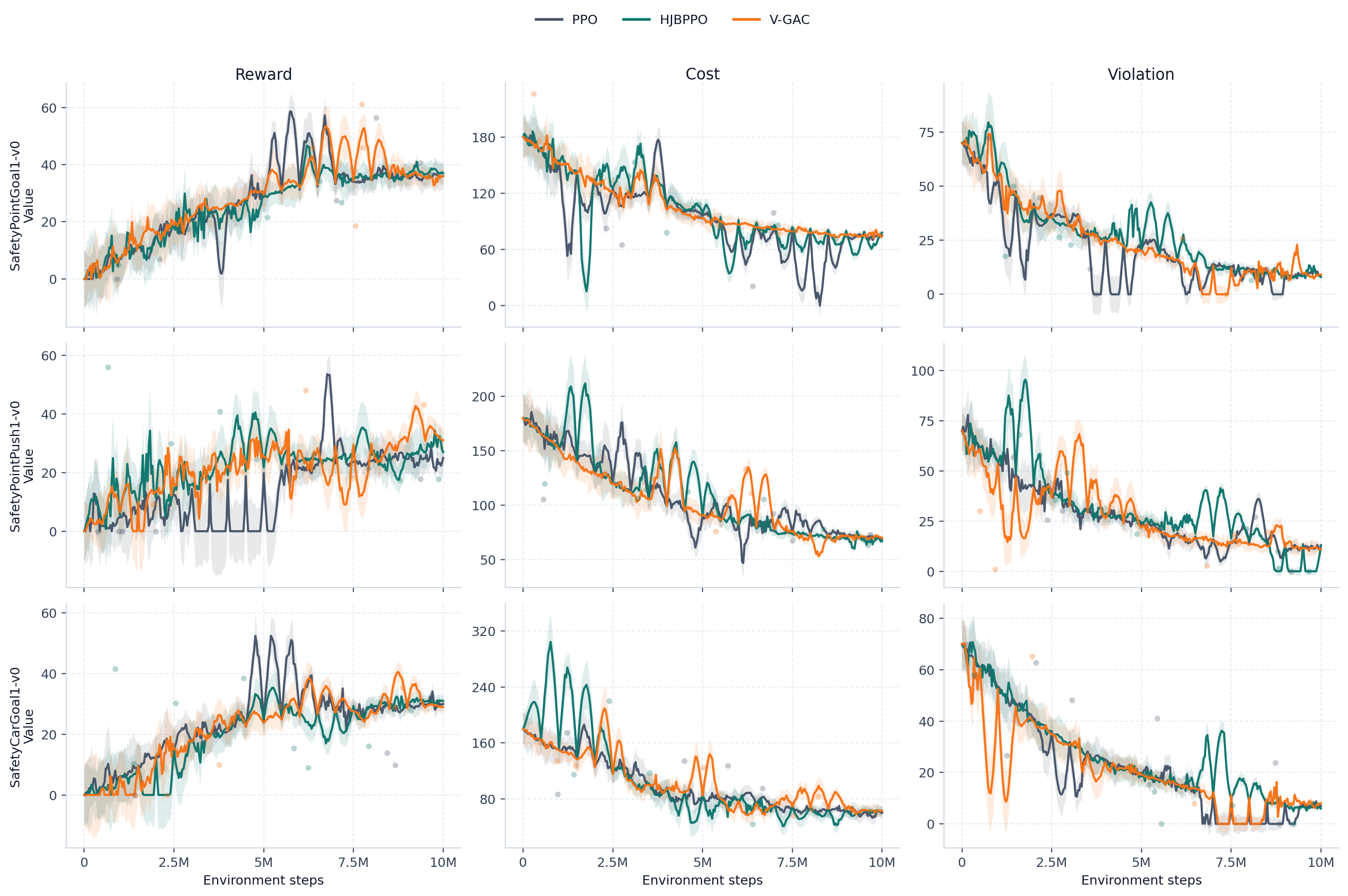}
\caption{Learning curves for the Safety Gymnasium environments under nominal evaluation. Columns show reward, episode cost, and violation rate; rows correspond to SafetyPointGoal1-v0, SafetyPointPush1-v0, and SafetyCarGoal1-v0.}
\label{fig:safety_learning_curves_clean}
\end{figure}

\begin{figure}[htbp]
\centering
\includegraphics[width=0.99\textwidth]{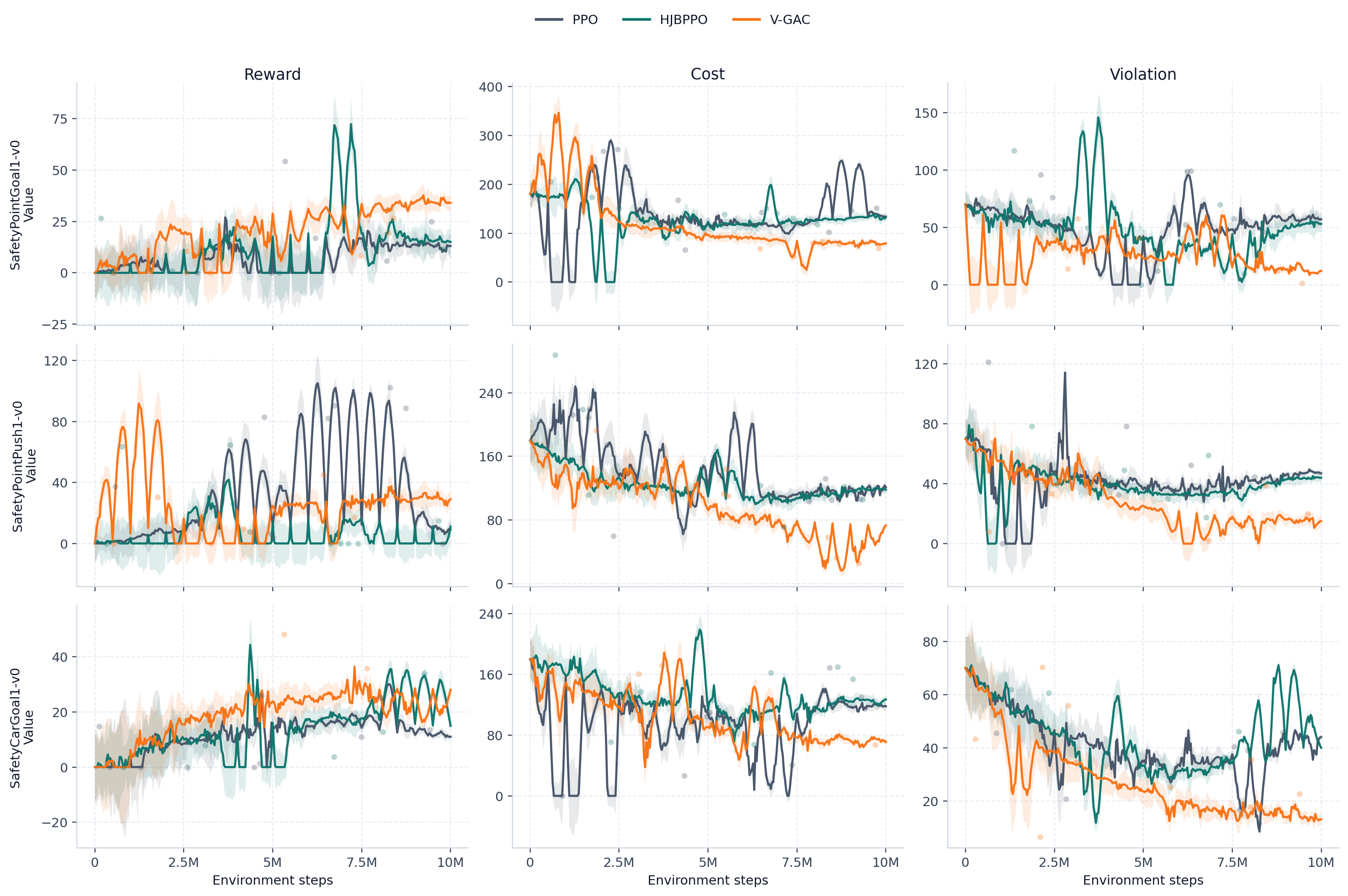}
\caption{Learning curves for the Safety Gymnasium environments under perturbed evaluation. Columns show reward, episode cost, and violation rate; rows correspond to SafetyPointGoal1-v0, SafetyPointPush1-v0, and SafetyCarGoal1-v0.}
\label{fig:safety_learning_curves_noisy}
\end{figure}

\cref{fig:safety_learning_curves_clean,fig:safety_learning_curves_noisy} show that the three methods are broadly comparable under nominal evaluation. In the perturbed setting, PPO and HJBPPO lose reward as episode cost and violation rate increase, whereas V-GAC remains closer to the nominal operating regime. The main distinction is the joint behavior of the three reported metrics, with V-GAC more consistently attaining higher reward while obtaining lower cost and lower violation rate.

%%%%%%%%%%%%%%%%%%%%%%%%%%%%%%%%%%%%%%%%%%%%%%%%%%%%%%%%%%%%%%%%%%%%%%%%%%%%%%%%%%%%%%%%

\subsection{Endpoint summaries across benchmarks}
We now present the final performance across tasks of \cref{exp:loco,exp:safety} using endpoint statistics. A cross-benchmark comparison under nominal and perturbed evaluation is provided. \cref{fig:final_endpoints_returns} collects return endpoints for the locomotion and Humanoid-v5 benchmarks, while \cref{fig:final_endpoints_safety} reports reward, cost, and violation endpoints for the safety tasks. Points show individual seeds, where each point is computed as the average over 150 evaluation episodes.

\begin{figure}[htbp]
\centering
\includegraphics[width=\textwidth]{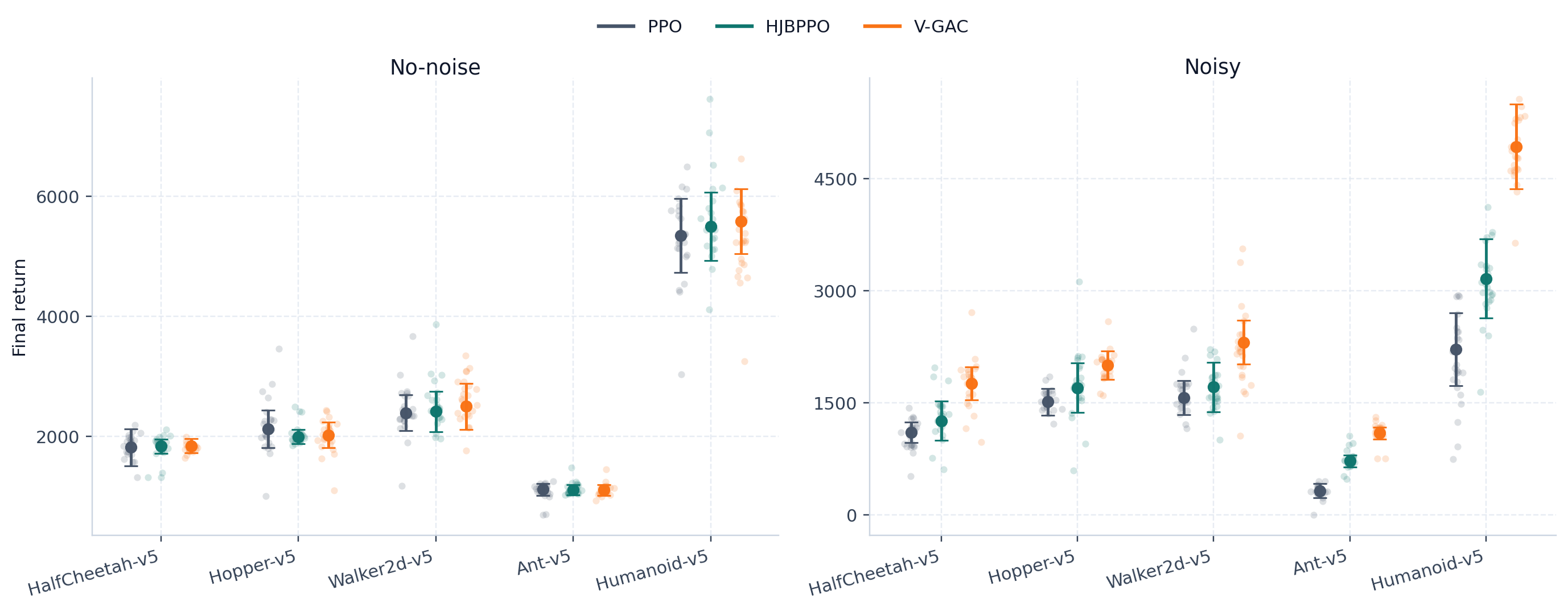}
\caption{Final return endpoints for the locomotion and Humanoid-v5 benchmarks. Left and right panels correspond to nominal and Brownian-perturbed evaluation, respectively. Points show individual seeds; markers and bars show the mean and spread.}
\label{fig:final_endpoints_returns}
\end{figure}

\cref{fig:final_endpoints_returns} summarizes the endpoint return statistics across the locomotion and Humanoid benchmarks. Under Brownian-perturbed evaluation, the V-GAC endpoint clouds are typically shifted upward and exhibit a reduced lower tail relative to PPO and HJBPPO.

\begin{figure}[htbp]
\centering
\includegraphics[width=\textwidth]{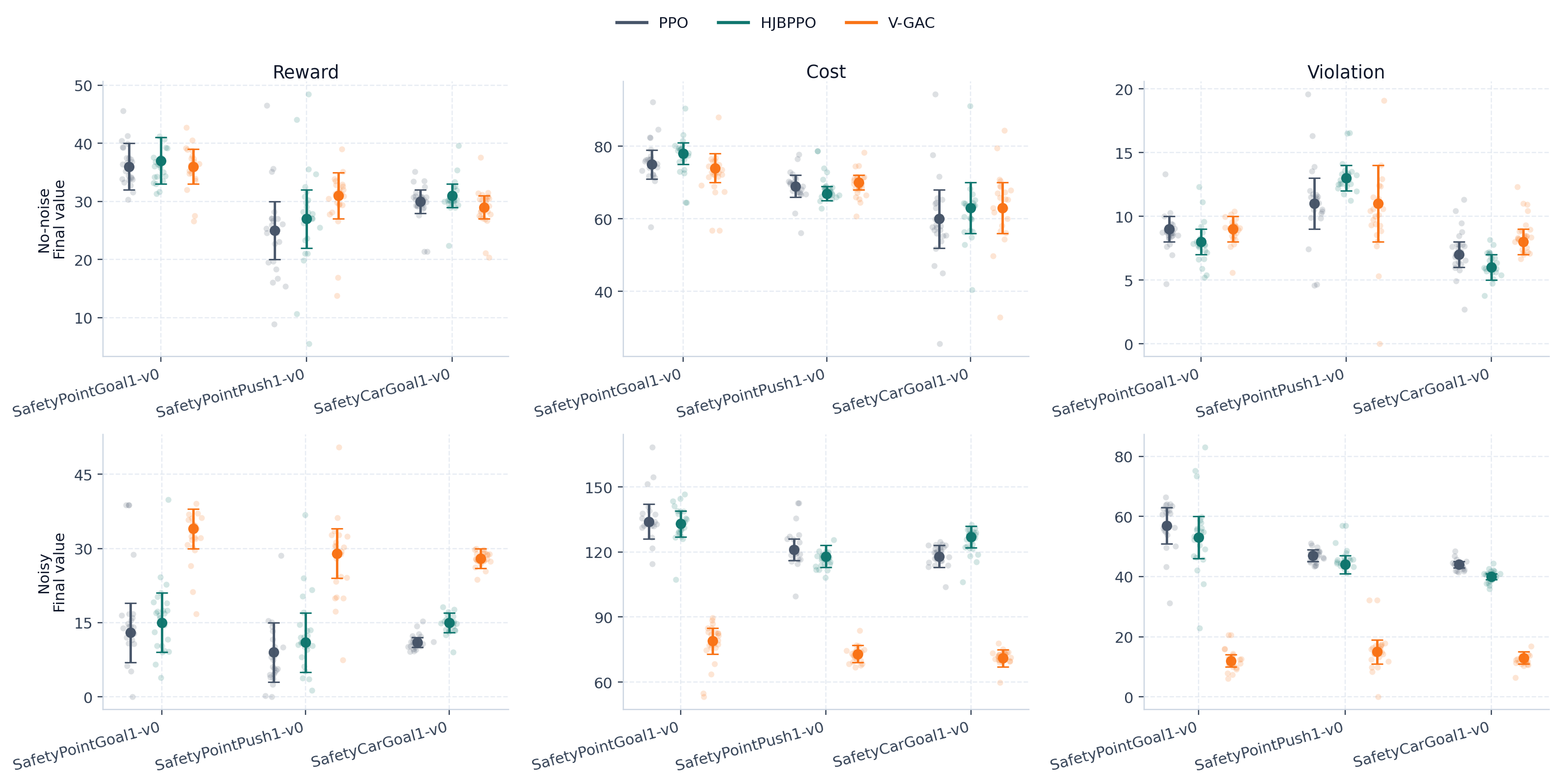}
\caption{Final safety endpoints under nominal and Brownian-perturbed evaluation. The top row corresponds to nominal evaluation and the bottom row to Brownian-perturbed evaluation. Columns report reward, cost, and violation rate. Points show individual seeds; markers and bars show the mean and spread.}
\label{fig:final_endpoints_safety}
\end{figure}

The safety-task endpoint statistics in \cref{fig:final_endpoints_safety} show that, under Brownian-perturbed evaluation, V-GAC more consistently attains higher reward together with lower cost and lower violation rate across environments.

\FloatBarrier
%%%%%%%%%%%%%%%%%%%%%%%%%%%%%%%%%%%%%%%%%%%%%%%%%%%%%%%%%%%%%%%%%%%%%%%%%%%%%%%%%%%%%%%%

\section{Concluding Remarks}
For nonsmooth stationary HJB problems, the relevant learning target is the comparison-based viscosity selection mechanism. Under the stated assumptions, vanishing critic-side viscosity loss together with vanishing jet-wise policy gap yields subsequential critic limits satisfying the exact HJB inequalities on the sampled SPD-envelope jet family. The numerical results indicate that the primary effect is not improved nominal return, but improved local viscosity consistency and reduced Hamiltonian suboptimality at sampled envelope jets. This improvement becomes consequential under Brownian-perturbed evaluation where trajectories are displaced from the nominal rollout distribution and V-GAC degrades substantially less. Interpreted at the PDE level, the noisy tests probe nearby second-order operators $F_\sigma$, with $F_\sigma \to F_0$ locally uniformly as $\sigma \to 0$, and the resulting robustness is consistent with the stability theory of viscosity solutions under such perturbations.

%%%%%%%%%%%%%%%%%%%%%%%%%%%%%%%%%%%%%%%%%%%%%%%%%%%%%%%%%%%%%%%%%%%%%%%%%%%%%%%%%%%%%%%%

\appendix

\section*{Appendix overview}
This appendix is organized as follows. \Cref{app:envelopes} develops the connection between generalized Moreau-envelope contacts and Crandall--Lions semijets, \cref{app:critic-correct} provides the proof of the sampled SPD-envelope viscosity consistency result, \cref{app:cl-completion-estimate} quantifies the completion error between the viscosity residual restricted to sampled SPD-envelope jets and the viscosity residual over the full Crandall--Lions semijet family in the sense of \cref{def:CL-jetdef}, and \cref{sec:sm-hparams} records the implementation parameters used in the numerical experiments. A Python implementation example is available at \url{https://github.com/golpashin/V-GAC}, providing additional implementation
details for \cref{alg:vgac}.

Throughout the appendix, $F$ denotes the operator \eqref{eq:F-def} and the standing assumptions remain in force.

\section{Generalized Moreau envelopes as test functions}\label[appendix]{app:envelopes}We recall the jet characterization of viscosity solutions; see \cite{bardi1997,crandall1992}. The equivalence is made precise in \cref{lem:jet-char}.

\begin{definition}\label{def:CL-jetdef}
Let $V:\overline{\Omega}\to\mathbb{R}$ and $\zeta\in\Omega$.
\begin{enumerate}
\item The second-order \emph{subjet} of $V$ at $\zeta$, denoted $J^{2,-}V(\zeta)$, is the set of pairs $(p,A)\in\R^{n}\times\mathbb{S}(n)$ such that
\begin{equation*}
V(x) \ge V(\zeta)+ p\cdot(x-\zeta)+\tfrac12(x-\zeta)^\top A (x-\zeta) + o(\|x-\zeta\|^2)\quad\text{as }x\to \zeta.
\end{equation*}
\item The second-order \emph{superjet} $J^{2,+}V(\zeta)$ is defined by reversing the inequality.
\end{enumerate}
\end{definition}

\begin{lemma} \label{lem:jet-char}
For continuous $V$, the following are equivalent:
\begin{romannum}
\item $V$ is a viscosity supersolution of $F(x,V,\nabla V,\nabla^2 V)=0$;
\item $F\big(\zeta,V(\zeta),p,A\big)\ge 0$ for every $\zeta$ and every $(p,A)\in J^{2,-}V(\zeta)$.
\end{romannum}
Similarly, $V$ is a viscosity subsolution iff $F\big(\zeta,V(\zeta),p,A\big)\le 0$ for every $(p,A)\in J^{2,+}V(\zeta)$\emph{; See \cite{crandall1992}.}
\end{lemma}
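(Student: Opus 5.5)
The plan is to show each implication by passing between test functions and jets. We use only the definition of $J^{2,\pm}V$ in \cref{def:CL-jetdef}, the continuity of $F$ guaranteed by \cref{assumps:structure} (ii), and its degenerate ellipticity. It suffices to treat supersolutions, because the subsolution statement follows by the symmetric argument, or by applying the supersolution case to $-V$ with the operator $\tilde F(x,r,p,A):=-F(x,-r,-p,-A)$.

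\emph{(ii)$\Rightarrow$(i).} Let $\phi\in C^2(\Omega)$ and suppose $V-\phi$ attains a local minimum at $\zeta\in\Omega$. Expanding $\phi$ by Taylor's formula, we get for $x$ near $\zeta$
\[
V(x)\ge V(\zeta)+\phi(x)-\phi(\zeta)=V(\zeta)+\nabla\phi(\zeta)\cdot(x-\zeta)+\tfrac12(x-\zeta)^\top\nabla^2\phi(\zeta)(x-\zeta)+o(\|x-\zeta\|^2).
\]
This says exactly that $(\nabla\phi(\zeta),\nabla^2\phi(\zeta))\in J^{2,-}V(\zeta)$, so (ii) yields $F(\zeta,V(\zeta),\nabla\phi(\zeta),\nabla^2\phi(\zeta))\ge0$.

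\emph{(i)$\Rightarrow$(ii).} This direction carries the main difficulty: for a given subjet $(p,A)\in J^{2,-}V(\zeta)$ we must build a $C^2$ test function that touches $V$ from below at $\zeta$. We will follow the classical construction of \cite{crandall1992}. Set
\[
\sigma(r):=\sup\Big\{\frac{(\psi(x))_-}{\|x-\zeta\|^2}:0<\|x-\zeta\|\le r\Big\},\qquad \psi(x):=V(x)-V(\zeta)-p\cdot(x-\zeta)-\tfrac12(x-\zeta)^\top A(x-\zeta).
\]
The function $\sigma$ is nondecreasing, and the subjet condition gives $\sigma(r)\to0$ as $r\to0$. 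We then define
\[
\rho(r):=\frac{1}{r}\int_r^{2r}\!\!\frac1s\int_s^{2s}\sigma(t)\,dt\,ds ,
\]
which makes $\rho(\|x-\zeta\|)\|x-\zeta\|^2$ of class $C^2$ with vanishing value, gradient and Hessian at $\zeta$, and satisfies $\rho\ge\sigma$. We take
\[
\phi(x)=V(\zeta)+p\cdot(x-\zeta)+\tfrac12(x-\zeta)^\top A(x-\zeta)-\rho(\|x-\zeta\|)\|x-\zeta\|^2 .
\]
Then $V-\phi$ has a local minimum at $\zeta$, while $\nabla\phi(\zeta)=p$ and $\nabla^2\phi(\zeta)=A$. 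Applying (i) to this $\phi$ gives $F(\zeta,V(\zeta),p,A)\ge0$.

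The step we expect to be hardest is verifying the $C^2$ regularity of $\rho(r)r^2$ at $r=0$, which requires estimating the derivatives of the double average. If that check becomes awkward, we will use a fallback that avoids it. We test with $\phi_\varepsilon=\phi_0-\varepsilon\|x-\zeta\|^2$, where $\phi_0$ is the quadratic part of $\phi$, giving $(p,A-2\varepsilon I)$. Since $V$ is continuous and $V-\phi_\varepsilon$ has a strict minimum at $\zeta$ only up to $o(\|x-\zeta\|^2)$, we then perturb, localize to a small ball, and use compactness to obtain minima $\zeta_\varepsilon\to\zeta$. The resulting jets converge to $(p,A)$ as $\varepsilon\to0$, and the continuity of $F$ passes the inequality to the limit.
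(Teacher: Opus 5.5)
The paper gives no argument of its own here; it simply defers to \cite{crandall1992}. Your proposal supplies the standard self-contained proof behind that citation, and it is correct. A few points should be tightened.

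\textbf{Continuity of $\sigma$.} The double average yields a $C^2$ function only because $\sigma$ is \emph{continuous} on $(0,r_0)$, and you should say so. It holds here because $\sigma$ is a supremum over punctured balls of the continuous function $(\psi)_-/\|x-\zeta\|^2$: left continuity follows by density, and right continuity by uniform continuity on annuli. For a merely monotone $\sigma$, $\sigma_1(s)=\frac1s\int_s^{2s}\sigma$ is only Lipschitz, and $r^2\rho(r)$ is then only $C^{1,1}$. So this, rather than the behavior at $r=0$, is the step that needs comment.

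\textbf{Behavior at $r=0$ and the cutoff.} Once $\sigma$ is continuous, $r=0$ is routine: the identity $r\sigma_1'(r)=2\sigma(2r)-\sigma(r)-\sigma_1(r)\to 0$ gives $h''(r)\to0$ and $h'(r)/r\to0$ for $h(r)=r^2\rho(r)$. You also need a smooth cutoff so that $\phi\in C^2(\Omega)$ globally, since $\sigma$ is only controlled for small radii.

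\textbf{The fallback.} Your fallback is actually the shortest complete proof, and it is simpler than you describe. Choose $r_\varepsilon$ with $\psi\ge-\frac{\varepsilon}{2}\|x-\zeta\|^2$ on $B(\zeta,r_\varepsilon)$. Then $V-\phi_\varepsilon\ge\frac{\varepsilon}{2}\|x-\zeta\|^2$ there, so $\zeta$ is already a strict local minimum of $V-\phi_\varepsilon$, with $\phi_\varepsilon$ a globally smooth quadratic. No perturbation, localization, or moving contacts $\zeta_\varepsilon$ are needed. Applying (i) gives $F(\zeta,V(\zeta),p,A-2\varepsilon I)\ge 0$. Continuity of $F$, which the paper has under \cref{assumps:structure} (ii), then lets $\varepsilon\to0$.

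\textbf{Degenerate ellipticity.} It plays no role anywhere in your argument. For this limit it would point the wrong way, since $A-2\varepsilon I\preceq A$ only gives $F(\cdot,A-2\varepsilon I)\ge F(\cdot,A)$.

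\textbf{Trade-off.} The $\rho$-construction uses no property of $F$ and shows that subjets are exactly the jets of $C^2$ functions touching from below. The $\varepsilon$-argument is much shorter but relies on continuity of $F$.
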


\begin{theorem} \label{thm:inf-to-subjet}
Let $V:\overline{\Omega}\to\R$ be l.s.c., $M\in\mathbb{S}_{++}(n)$, and $x\in\overline{\Omega}$.
If
\[
\zeta_x\in\argmin_{\zeta\in\overline{\Omega}}
\Big\{V(\zeta)+\tfrac12 (x-\zeta)^\top M(x-\zeta)\Big\},
\]
and we set $p:=M(x-\zeta_x)$ and $A:=-M$, then
\[
V(\zeta)\ \ge\ V(\zeta_x) + p\cdot(\zeta-\zeta_x) + \tfrac12 (\zeta-\zeta_x)^\top A (\zeta-\zeta_x)
\quad \forall\,\zeta\in\overline{\Omega}.
\]
In particular, if $\zeta_x\in\Omega$, then $(p,A)\in J^{2,-}V(\zeta_x)$.
\end{theorem}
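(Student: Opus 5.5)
The plan is to use only the minimizing property of $\zeta_x$ and then expand a quadratic identity. For every $\zeta\in\overline{\Omega}$, minimality of $\zeta_x$ gives
\[
V(\zeta)+\tfrac12(x-\zeta)^\top M(x-\zeta)\ \ge\ V(\zeta_x)+\tfrac12(x-\zeta_x)^\top M(x-\zeta_x).
\]
I will rearrange this as
\[
V(\zeta)\ \ge\ V(\zeta_x)+\tfrac12(x-\zeta_x)^\top M(x-\zeta_x)-\tfrac12(x-\zeta)^\top M(x-\zeta).
\]
The only task is then to identify the right-hand side with the claimed quadratic.

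For the algebra, set $h:=\zeta-\zeta_x$, so that $x-\zeta=(x-\zeta_x)-h$. Using symmetry of $M$, the expansion is
\[
(x-\zeta)^\top M(x-\zeta)=(x-\zeta_x)^\top M(x-\zeta_x)-2\,h^\top M(x-\zeta_x)+h^\top M h .
\]
Substituting, the two $(x-\zeta_x)^\top M(x-\zeta_x)$ terms cancel. This leaves
\[
V(\zeta)\ \ge\ V(\zeta_x)+\big(M(x-\zeta_x)\big)\cdot h-\tfrac12 h^\top M h ,
\]
which is exactly the asserted inequality with $p=M(x-\zeta_x)$ and $A=-M$. It holds globally on $\overline{\Omega}$ with no remainder term.

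For the jet statement, suppose $\zeta_x\in\Omega$. Then there is a ball $B(\zeta_x,r)\subset\Omega$, and on it the global inequality holds, which is stronger than the subjet inequality of \cref{def:CL-jetdef}: one may take the $o(\|\zeta-\zeta_x\|^2)$ term to be identically zero. Hence $(p,A)\in J^{2,-}V(\zeta_x)$.

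Lower semicontinuity of $V$ is needed only to guarantee that the minimizer exists, since $\overline{\Omega}$ is compact and the objective is l.s.c. In the statement the minimizer is given as a hypothesis, so this is not an issue. There is no real obstacle; the one point requiring care is the sign bookkeeping in the quadratic expansion, which is what produces $p=+M(x-\zeta_x)$ and the second-order part $-M$ rather than $+M$.
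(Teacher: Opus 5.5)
Your proof is correct and matches the paper's argument: you use minimality of $\zeta_x$, rearrange, and expand the quadratic with $h=\zeta-\zeta_x$, which yields $p=M(x-\zeta_x)$ and $A=-M$ with no remainder term. You also justify subjet membership at an interior contact by taking the $o(\|\zeta-\zeta_x\|^2)$ term to be zero, a step the paper's proof leaves implicit.
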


\begin{proof}
By minimality of $\zeta_x$,
\[
V(\zeta_x)+\tfrac12(x-\zeta_x)^\top M(x-\zeta_x)\ \le\
V(\zeta)+\tfrac12(x-\zeta)^\top M(x-\zeta)\quad\forall \zeta\in\overline{\Omega}.
\]
Rearrange and expand $(x-\zeta)^\top M(x-\zeta)$ with $h:=\zeta-\zeta_x$ to obtain
\[
V(\zeta)\ \ge\ V(\zeta_x) + M(x-\zeta_x)\cdot h - \tfrac12 h^\top M h
= V(\zeta_x) + p\cdot(\zeta-\zeta_x) + \tfrac12(\zeta-\zeta_x)^\top (-M)(\zeta-\zeta_x),
\]
which is the stated inequality with $A=-M$.
\end{proof}

\begin{theorem} \label{thm:sup-to-superjet}
Let \(V:\overline{\Omega}\to\R\) be u.s.c., \(M\in\mathbb{S}_{++}(n)\), and \(x\in\overline{\Omega}\).
If 
\[
\zeta_x^\star\in\argmax_{\zeta\in\overline{\Omega}}\{V(\zeta)-\tfrac12 (x-\zeta)^\top M(x-\zeta)\},
\]
and we set $p:=-\,M(x-\zeta_x^\star)$ and $A:=+M$, then
\begin{equation*}\label{eq:superjet-ineq}
V(\zeta)\ \le\ V(\zeta_x^\star) + p\cdot(\zeta-\zeta_x^\star) + \tfrac12 (\zeta-\zeta_x^\star)^\top A (\zeta-\zeta_x^\star)\quad \forall\,\zeta\in\overline{\Omega}.
\end{equation*}
Similarly, if $\zeta_x^\star\in\Omega$, then $(p,A)\in J^{2,+}V(\zeta_x^\star)$.
\end{theorem}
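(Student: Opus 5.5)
The statement to prove is \cref{thm:sup-to-superjet}, the mirror image of \cref{thm:inf-to-subjet}. I will deduce it from that theorem by negation rather than repeat the computation. Set $W:=-V$. Since $V$ is u.s.c., $W$ is l.s.c. on $\overline{\Omega}$. Moreover,
\[
\argmax_{\zeta\in\overline{\Omega}}\Big\{V(\zeta)-\tfrac12(x-\zeta)^\top M(x-\zeta)\Big\}
=\argmin_{\zeta\in\overline{\Omega}}\Big\{W(\zeta)+\tfrac12(x-\zeta)^\top M(x-\zeta)\Big\},
\]
so $\zeta_x^\star$ is an inf-envelope contact for $W$.

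Applying \cref{thm:inf-to-subjet} to $W$ with $p_W:=M(x-\zeta_x^\star)$ and $A_W:=-M$ gives, for all $\zeta\in\overline{\Omega}$,
\[
W(\zeta)\ \ge\ W(\zeta_x^\star)+p_W\cdot(\zeta-\zeta_x^\star)+\tfrac12(\zeta-\zeta_x^\star)^\top A_W(\zeta-\zeta_x^\star).
\]
Multiplying by $-1$ reverses the inequality. This yields exactly the claimed bound with $p=-p_W=-M(x-\zeta_x^\star)$ and $A=-A_W=+M$. If one prefers a direct argument, the same bound follows by writing out maximality of $\zeta_x^\star$, substituting $h:=\zeta-\zeta_x^\star$ and $x-\zeta=(x-\zeta_x^\star)-h$, and expanding the quadratic, so that the cross term $M(x-\zeta_x^\star)\cdot h$ and the term $\tfrac12 h^\top Mh$ appear with the stated signs.

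For the jet statement, suppose $\zeta_x^\star\in\Omega$. A global inequality on $\overline{\Omega}$ implies the local one in \cref{def:CL-jetdef} (reversed inequality) with zero remainder, and $\zeta_x^\star$ is an interior point. Hence $(p,A)\in J^{2,+}V(\zeta_x^\star)$.

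There is no real obstacle. The only points to check are the sign bookkeeping under negation and that upper semicontinuity of $V$ transfers to lower semicontinuity of $-V$, which \cref{thm:inf-to-subjet} requires. That theorem does not actually use semicontinuity except to guarantee the argmin is nonempty, and here a maximizer is assumed to exist.
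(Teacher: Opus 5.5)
Your proposal is correct and is essentially the paper's argument. The paper says only that the proof is that of \cref{thm:inf-to-subjet} with the inequalities reversed, and your reduction via $W:=-V$ (or your direct expansion with $h:=\zeta-\zeta_x^\star$) is a rigorous rendering of that mirroring, with correct sign bookkeeping ($p=-M(x-\zeta_x^\star)$, $A=+M$) and a correct passage from the global inequality on $\overline{\Omega}$ to $(p,A)\in J^{2,+}V(\zeta_x^\star)$ with zero remainder at an interior contact.
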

\begin{proof}
The proof is identical to that of \cref{thm:inf-to-subjet}, with the inequalities reversed.
\end{proof}

We now connect envelope-generated jets to the viscosity definition.

\begin{corollary}\label{cor:visc_sup_sub} The following hold for operator $F$: 
\begin{romannum}
\item If $V$ is a viscosity supersolution, then for all $(x,M) \in \Omega \times \mathbb{S}_{++}(n)$, letting $\zeta_x\in\argmin_{\zeta\in\overline{\Omega}}\{V(\zeta)+\tfrac12(x-\zeta)^\top M(x-\zeta)\}$ and $(p,A)=(M(x-\zeta_x),-M)$, we have
\[
F(\zeta_x,V(\zeta_x),p,A)\ge 0\quad\text{whenever }\zeta_x\in\Omega.
\]
\item If $V$ is a viscosity subsolution, then for all $(x,M) \in \Omega \times \mathbb{S}_{++}(n)$, letting $\zeta_x^\star\in\argmax_{\zeta\in\overline{\Omega}}\{V(\zeta)-\tfrac12(x-\zeta)^\top M(x-\zeta)\}$ and $(p,A)=(-M(x-\zeta_x^\star),+M)$, gives
\[
F(\zeta_x^\star,V(\zeta_x^\star),p,A)\le 0\quad\text{whenever }\zeta_x^\star\in\Omega.
\]
\end{romannum}
\end{corollary}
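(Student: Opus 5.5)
The statement is \cref{cor:visc_sup_sub}. The plan is to combine the envelope-to-jet results \cref{thm:inf-to-subjet,thm:sup-to-superjet} with the jet characterization \cref{lem:jet-char}. No limiting or approximation argument is needed, since the envelope contact supplies a genuine element of the semijet at the contact point.

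For part (i), let $V$ be a viscosity supersolution. By \cref{def:visc_ineq} it is l.s.c. on $\overline{\Omega}$; I will take $V$ continuous, as in the standing setting of \cref{lem:jet-char}. Fix $(x,M)\in\Omega\times\mathbb S_{++}(n)$. The map $\zeta\mapsto V(\zeta)+\tfrac12(x-\zeta)^\top M(x-\zeta)$ is l.s.c. on the compact set $\overline{\Omega}$, so by Weierstrass a minimizer $\zeta_x$ exists and the statement is not vacuous.

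Suppose $\zeta_x\in\Omega$. Then \cref{thm:inf-to-subjet} gives the global inequality
\[
V(\zeta)\ \ge\ V(\zeta_x)+p\cdot(\zeta-\zeta_x)-\tfrac12(\zeta-\zeta_x)^\top M(\zeta-\zeta_x)
\quad\text{for all }\zeta\in\overline{\Omega}.
\]
This is stronger than the $o(\|\zeta-\zeta_x\|^2)$ requirement of \cref{def:CL-jetdef}, so $(p,A)=(M(x-\zeta_x),-M)\in J^{2,-}V(\zeta_x)$. Applying the implication (i)$\Rightarrow$(ii) of \cref{lem:jet-char} at the interior point $\zeta_x$ then yields $F(\zeta_x,V(\zeta_x),p,A)\ge 0$.

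Alternatively, one can bypass jets: set $\phi(\zeta):=V(\zeta_x)+p\cdot(\zeta-\zeta_x)-\tfrac12(\zeta-\zeta_x)^\top M(\zeta-\zeta_x)\in C^2$. Then $V-\phi\ge 0=(V-\phi)(\zeta_x)$, so $V-\phi$ has a local minimum at $\zeta_x$. Since $\nabla\phi(\zeta_x)=p$ and $\nabla^2\phi(\zeta_x)=-M$, \cref{def:visc_ineq}(ii) gives the inequality directly. I would mention this route because it avoids relying on continuity in \cref{lem:jet-char}.

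Part (ii) is symmetric. Use u.s.c. of the subsolution, a maximizer $\zeta_x^\star$, \cref{thm:sup-to-superjet} to place $(-M(x-\zeta_x^\star),M)$ in $J^{2,+}V(\zeta_x^\star)$, and then either \cref{lem:jet-char} or the test function $\phi(\zeta)=V(\zeta_x^\star)+p\cdot(\zeta-\zeta_x^\star)+\tfrac12(\zeta-\zeta_x^\star)^\top M(\zeta-\zeta_x^\star)$, which touches $V$ from above.

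The only point needing care, and so the main obstacle, is the restriction $\zeta_x\in\Omega$. Contacts on $\partial\Omega$ yield no interior jet, which is exactly why the statement is conditional. A secondary point is checking that the semicontinuity required by \cref{def:visc_ineq} suffices for the existence of extremizers.
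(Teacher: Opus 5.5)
Your proposal is correct, and its main route coincides with the paper's proof: \cref{thm:inf-to-subjet} (resp.\ \cref{thm:sup-to-superjet}) places the envelope jet in $J^{2,-}V(\zeta_x)$ (resp.\ $J^{2,+}V(\zeta_x^\star)$) when the contact is interior, and \cref{lem:jet-char} then gives the sign of $F$. Your alternative of testing \cref{def:visc_ineq} directly with the touching quadratic $\phi$ is also valid, and it needs only the semicontinuity built into the viscosity definition rather than the continuity under which \cref{lem:jet-char} is stated.
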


\begin{proof}
By \cref{thm:inf-to-subjet} and \cref{lem:jet-char}, \((p,A)\in J^{2,-}V(\zeta_x)\), hence the supersolution inequality at \((\zeta_x,p,A)\). Similarly, \cref{thm:sup-to-superjet} yields \((p,A)\in J^{2,+}V(\zeta_x^\star)\) and the subsolution inequality at \((\zeta_x^\star,p,A)\).
\end{proof}

\section{Proof of \cref{thm:critic-correct}}\label[appendix]{app:critic-correct}We begin with the following auxiliary lemma, after which we prove the viscosity consistency result.

\begin{lemma}
\label{lem:delta-contact}
Let $\Xi$ be compact, let $\Phi_k,\Phi\in C(\Xi)$ with $\|\Phi_k-\Phi\|_{L^\infty(\Xi)}\to 0$, and let $z_k\in \Xi$ satisfy $\Phi_k(z_k)\le \inf_{z\in \Xi}\Phi_k(z)+\delta_k$, for $\delta_k\to0$. Then every cluster point of $(z_k)$ belongs to $\argmin_{\Xi} \Phi$.
\end{lemma}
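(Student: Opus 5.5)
The plan is the standard stability argument for approximate minimizers under uniform convergence. Fix a cluster point $\bar z$ of $(z_k)$ and pass to a subsequence $z_{k_j}\to\bar z$. Compactness of $\Xi$ gives $\bar z\in\Xi$. The goal is to show $\Phi(\bar z)\le\Phi(z)$ for every $z\in\Xi$; this inequality is exactly the statement $\bar z\in\argmin_\Xi\Phi$.

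The first step is to compare the infima. For any $z\in\Xi$ we have $\Phi_k(z)\le\Phi(z)+\|\Phi_k-\Phi\|_{L^\infty(\Xi)}$. Taking the infimum over $z$ and using the symmetric bound gives
\[
\bigl|\inf_\Xi\Phi_k-\inf_\Xi\Phi\bigr|\le\|\Phi_k-\Phi\|_{L^\infty(\Xi)}\to0 .
\]

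The second step is to pass to the limit in the $\delta_k$-optimality inequality along the subsequence. Write
\[
\Phi(z_{k_j})\le \Phi_{k_j}(z_{k_j})+\|\Phi_{k_j}-\Phi\|_{L^\infty(\Xi)}\le \inf_\Xi\Phi_{k_j}+\delta_{k_j}+\|\Phi_{k_j}-\Phi\|_{L^\infty(\Xi)}.
\]
Continuity of $\Phi$ makes the left side converge to $\Phi(\bar z)$. By the first step, the right side converges to $\inf_\Xi\Phi$, since $\delta_{k_j}\to0$. Hence $\Phi(\bar z)\le\inf_\Xi\Phi$, so $\bar z$ is a minimizer.

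No step is a real obstacle. The one point needing care is that the evaluation point $z_{k_j}$ moves with $j$. This is handled by splitting $\Phi_{k_j}(z_{k_j})-\Phi(\bar z)$ into a uniform-convergence error and a continuity error, which is how the display above is arranged. The argument uses only continuity of $\Phi$, not of the $\Phi_k$, and lower semicontinuity of $\Phi$ would suffice. For the later application to sup-envelopes, one applies the lemma to $-\Phi_k$ and $-\Phi$.
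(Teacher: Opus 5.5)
Your proof is correct and is essentially the paper's argument. The paper passes to the limit in $\Phi_{k_j}(z_{k_j})\le\Phi_{k_j}(y)+\delta_{k_j}$ for each fixed $y\in\Xi$, whereas you first show $\inf_\Xi\Phi_k\to\inf_\Xi\Phi$ and compare with the infimum directly, which is only a cosmetic rearrangement; your side remarks (lower semicontinuity of $\Phi$ suffices, and the sup case follows by applying the lemma to $-\Phi_k,-\Phi$) are also correct.
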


\begin{proof}
Let $z_{k_j}\to z$. For any $y\in \Xi$,
$\Phi_{k_j}(z_{k_j})\le \Phi_{k_j}(y)+\delta_{k_j}$.
Passing to the limit as $j\to\infty$ and using uniform convergence of
$\Phi_{k_j}$ to $\Phi$ (and continuity of $\Phi$) gives $\Phi(z)\le \Phi(y)$. Since $y\in\Xi$ was arbitrary, $z\in\argmin_\Xi\Phi$.
\end{proof}
An analogous statement holds for $\delta_k$-optimal maximizers.

\begin{proof}[Proof of \cref{thm:critic-correct}]
By \cref{assumps:limit} (ii), the sequence $\{V_k\}_{k\in\mathbb N}$ is equibounded and equicontinuous on $\overline{\Omega}$. By Arzel\`a--Ascoli,
every sequence admits a uniformly convergent subsequence. Fix such a subsequence,
still denoted by $k$, and let $V_k\to V$ uniformly on $\overline{\Omega}$, for some $V\in C(\overline{\Omega})$. Since $V_k|_{\partial\Omega}=g$ for all $k$, the uniform limit satisfies $V|_{\partial\Omega}=g$; the learned-boundary case is analogous and is omitted, since vanishing boundary loss plus equicontinuity enforces $V=g$ on $\partial\Omega$.

For each $k$, let
\[
\mathcal M_k=(M_{k,1},\dots,M_{k,K})
\]
denote the random curvature bank entering $L_{\text{Visc}}(\theta_k,\psi_k)$,
and write
\[
\zeta_{k}^{-}(x,M):=P_k(x,M,-1),\qquad
\zeta_{k}^{+}(x,M):=P_k(x,M,+1),
\]
with associated jets
\begin{align*}
p_k^{-}(x,M) &:= M\bigl(x-\zeta_k^{-}(x,M)\bigr), & A^{-} &:= -M,\\
p_k^{+}(x,M) &:= -\,M\bigl(x-\zeta_k^{+}(x,M)\bigr), & A^{+} &:= +M.
\end{align*}
For brevity, define
\begin{align*}
g_{\text{super},k}(x,M)
&:= -\mathcal H^{\bar\pi_k}\!\bigl(\zeta_k^{-}(x,M),V_k(\zeta_k^{-}(x,M)),
p_k^{-}(x,M),A^{-}\bigr),\\
g_{\text{sub},k}(x,M)
&:= \ \mathcal H^{\bar\pi_k}\!\bigl(\zeta_k^{+}(x,M),V_k(\zeta_k^{+}(x,M)),
p_k^{+}(x,M),A^{+}\bigr).
\end{align*}
where the quantities are extended by $0$ whenever the contact lies on $\partial\Omega$, consistent with the masking defined in the viscosity loss.

By the definition of $L_{\text{Visc}}$ and \cref{assump:sampling}, we write $L_{\text{Visc}}(\theta_k,\psi_k)$ as  
\begin{gather*}
\mathbb E_{x,\mathcal M_k}
\Big[ \bigl(\max_{1\le j\le K}\max\{g_{\text{super},k}(x,M_{k,j}),0\}\bigr)^2
+ \bigl(\max_{1\le j\le K}\max\{g_{\text{sub},k}(x,M_{k,j}),0\}\bigr)^2
\Big] \\
\ge \mathbb E_{x,M\sim\nu_X\otimes\nu_{\mathcal M}}
\Big(\max\{g_{\text{super},k}(x,M),0\}^2 +
\max\{g_{\text{sub},k}(x,M),0\}^2\Big).
\end{gather*}
Since $L_{\text{Visc}}(\theta_k,\psi_k)\to 0$, it follows that $\max\{g_{\text{super},k},0\}\to0$
and $\max\{g_{\text{sub},k},0\}\to 0$ in $L^2(\nu_X\otimes\nu_{\mathcal{M}})$. Passing to a further subsequence if necessary, we may assume that $\max\{g_{\text{super},k}(x,M),0\}\to 0$ and $\max\{g_{\text{sub},k}(x,M),0\}\to 0$ for $(\nu_X\otimes\nu_{\mathcal M})$-a.e. $(x,M)$.

Next, by \cref{lem:jet_gap_decomp} and \cref{assumps:limit} (iii),
\begin{align*}
L_{\text{jet}}(\phi_k;\psi_k)-L_{\text{jet}}^\star(\psi_k)
&=
\mathbb E_{(x,M)\sim\nu_X\otimes\nu_{\mathcal M}}
\Big[
\text{gap}_{\bar\pi_k}\!\bigl(\zeta_k^{-}(x,M),p_k^{-}(x,M),A^{-}\bigr) \\
&\qquad +
\text{gap}_{\bar\pi_k}\!\bigl(\zeta_k^{+}(x,M),p_k^{+}(x,M),A^{+}\bigr)
\Big]
\to 0 ,
\end{align*}
where each term is extended to zero whenever the corresponding $\zeta_k^{b}$
lies in $\partial\Omega$. For each \(b\in\{-1,+1\}\), since both terms are nonnegative, $\text{gap}_{\bar\pi_k}\!\bigl(\zeta_k^{b},p_k^{b},A^{b}\bigr)\to 0$, in $L^1(\nu_X\otimes\nu_{\mathcal M})$ and, after passing to a further subsequence, the same convergences hold for
$(\nu_X\otimes\nu_{\mathcal M})$-a.e.\ $(x,M)$.

Let $\mathcal{A}\subset\overline{\Omega}\times\mathbb S_{++}(n)$ be a full-measure set on
which all four pointwise convergences above hold. Fix $(x,M)\in \mathcal{A}$.
Since $\overline{\Omega}$ is compact, the sequences
$\zeta_k^{\pm}(x,M)$ admit cluster points. Choose a subsequence such that $\zeta_k^{\pm}(x,M)\to \zeta^{\pm}$. Then define the envelope objective functionals
\[
\Phi_k^{\inf}(\zeta):=V_k(\zeta)+\tfrac12(x-\zeta)^\top M(x-\zeta),
\qquad
\Phi_k^{\sup}(\zeta):=V_k(\zeta)-\tfrac12(x-\zeta)^\top M(x-\zeta),
\]
with their respective limits $\Phi^{\inf}(\zeta)$ and $\Phi^{\sup}(\zeta)$. Because $V_k\to V$ uniformly on $\overline{\Omega}$,
\[
\|\Phi_k^{\inf}-\Phi^{\inf}\|_{L^\infty(\overline{\Omega})}\to 0,
\qquad
\|\Phi_k^{\sup}-\Phi^{\sup}\|_{L^\infty(\overline{\Omega})}\to 0.
\]
By \cref{assumps:limit} (i), $\zeta_k^{-}(x,M)$ is a
$\delta_k$-optimal minimizer of $\Phi_k^{\inf}$ and $\zeta_k^{+}(x,M)$ is a
$\delta_k$-optimal maximizer of $\Phi_k^{\sup}$, with $\delta_k\to 0$.
\cref{lem:delta-contact} therefore implies that
\[
\zeta^{-}\in \argmin_{\zeta\in\overline{\Omega}} \Phi^{\inf}(\zeta),
\qquad
\zeta^{+}\in \argmax_{\zeta\in\overline{\Omega}} \Phi^{\sup}(\zeta).
\]
Thus $\zeta^{-}=\zeta_x$ and $\zeta^{+}=\zeta_x^\star$ are the exact inf-/sup-envelope contacts of $V$ at $(x,M)$.

Set $p^-:=M(x-\zeta^-)$ and $p^+:=-\,M(x-\zeta^+)$.
If $\zeta^{-}\in\Omega$, then, since $\Omega$ is open and
$\zeta_k^{-}(x,M)\to\zeta^{-}$, we have $\zeta_k^{-}(x,M)\in\Omega$ for all
large $k$. For such $k$,
\[
F\bigl(\zeta_k^{-},V_k(\zeta_k^{-}),p_k^{-},A^{-}\bigr)
=-\,g_{\text{super},k}(x,M)+\text{gap}_{\bar\pi_k}\!\bigl(\zeta_k^{-}(x,M),p_k^{-}(x,M),A^{-}\bigr).
\]
Since $\text{gap}_{\bar\pi_k}\ge 0$, we obtain $F\bigl(\zeta_k^{-},V_k(\zeta_k^{-}),p_k^{-},A^{-}\bigr) \ge -\max\{g_{\text{super},k}(x,M),0\}$.
Letting $k\to\infty$ along the chosen subsequence and using continuity of $F$,
uniform convergence of $V_k$ and $\zeta_k^{-}(x,M)\to\zeta^{-}$ yields $F\big(\zeta_x,V(\zeta_x),M(x-\zeta_x),-M\big)\ge 0$.

Likewise, if $\zeta^{+}\in\Omega$, then $\zeta_k^{+}(x,M)\in\Omega$ for all
large $k$, and
\begin{equation*}
F\bigl(\zeta_k^{+},V_k(\zeta_k^{+}),p_k^{+},A^{+}\bigr)
=g_{\text{sub},k}(x,M)+\text{gap}_{\bar\pi_k}\!\bigl(\zeta_k^{+}(x,M),p_k^{+}(x,M),A^{+}\bigr).
\end{equation*}
So, $F\bigl(\zeta_k^{+},V_k(\zeta_k^{+}),p_k^{+},A^{+}\bigr) \le \max\{g_{\text{sub},k}(x,M),0\} + \text{gap}_{\bar\pi_k}\!\bigl(\zeta_k^{+}(x,M),p_k^{+}(x,M),A^{+}\bigr)$.
Passing to the limit then gives $F\big(\zeta_x^\star,V(\zeta_x^\star),-M(x-\zeta_x^\star),M\big)\le 0$.

Since $(x,M)\in \mathcal{A}$ was arbitrary, we conclude that for
$(\nu_X\otimes\nu_{\mathcal M})$-a.e. $(x,M)$ there exist exact inf-/sup-envelope contacts of $V$ whose associated SPD-envelope jets satisfy the exact viscosity inequalities whenever those contacts lie in $\Omega$.
\end{proof}

\section{Completion error for sampled SPD-envelope tests}
\label[appendix]{app:cl-completion-estimate}

The sampled SPD-envelope loss enforces viscosity inequalities on the
quadratic tests generated by the envelope construction. Here we quantify
the gap between those sampled tests and the full Crandall--Lions semijet
conditions on a bounded region of jet space.

Throughout this section, $W\in C(\overline{\Omega})$ denotes a continuous
candidate function. Let
$\mathcal Q\subset\overline{\Omega}\times\mathbb S_{++}(n)$ denote the
anchor--curvature set used for testing, i.e. $\mathcal Q$ may be
the support of $\nu_X\otimes\nu_{\mathcal M}$ or a finite validation set.
For $(x,M)\in\mathcal Q$, the contacts $\zeta_x,\zeta_x^\star$ and associated
data $p^\pm(x,M),A^\pm$ are understood in the sense of the envelope
construction in \cref{subsec:envelopes}. If an envelope contact is not unique, fix an arbitrary measurable selection.

Since full Crandall--Lions semijets need not be bounded, we restrict the analysis
to a compact jet tube
\[
\Upsilon \subset
\overline{\Omega}\times\mathbb R\times\mathbb R^n\times\mathbb S(n).
\]
All jet sets, residuals, active sets, and coverage radii below are understood
to be restricted to jet data lying in $\Upsilon$. Thus, for example, a datum
$Y=(\zeta,W(\zeta),p,A)$ belongs to a jet set only when
$Y\in\Upsilon$. This compact restriction is suppressed from the notation. The restricted residuals are therefore finite and the estimate below is merely a compact-tube comparison. 

Since $F$ is continuous, let
$\omega_F$ be a nondecreasing right-continuous modulus of continuity for $F$
on $\Upsilon$, so that
\[
|F(Y)-F(\hat{Y})|\le \omega_F(d(Y,\hat{Y})),
\]
for all $Y,\hat{Y}\in\Upsilon$. Here, if
$Y=(\zeta,r,p,A)$ and $\hat{Y}=(\hat{\zeta},\hat{r},\hat{p},\hat{A})$, then
\[
d(Y,\hat{Y}):=
|\zeta-\hat{\zeta}|+|r-\hat{r}|+\|p-\hat{p}\|+\|A-\hat{A}\|_F .
\]

Define the full Crandall--Lions jet families
\[
\mathfrak J^{-}(W)
:=\left\{(\zeta,W(\zeta),p,A):\zeta\in\Omega,\;
(p,A)\in J^{2,-}W(\zeta)\right\},
\]
and
\[
\mathfrak J^{+}(W)
:=\left\{(\zeta,W(\zeta),p,A):\zeta\in\Omega,\;
(p,A)\in J^{2,+}W(\zeta)\right\}.
\]
Note that these are the full supersolution and subsolution test families, respectively,
restricted to $\Upsilon$ by the convention above.

Similarly, define the sampled SPD-envelope jet families
\[
\mathfrak S_{\mathcal Q}^{-}(W)
:=\left\{(\zeta_x,W(\zeta_x),p^{-}(x,M),A^{-}):
(x,M)\in\mathcal Q,\;\zeta_x\in\Omega\right\},
\]
and
\[
\mathfrak S_{\mathcal Q}^{+}(W)
:=\left\{(\zeta_x^\star,W(\zeta_x^\star),p^{+}(x,M),A^{+}):
(x,M)\in\mathcal Q,\;\zeta_x^\star\in\Omega\right\}.
\]
By the envelope-to-jet construction,
$\mathfrak S_{\mathcal Q}^{-}(W)\subset\mathfrak J^{-}(W)$ and
$\mathfrak S_{\mathcal Q}^{+}(W)\subset\mathfrak J^{+}(W)$.

The restricted full Crandall--Lions residuals are
\[
\mathfrak R_{\rm CL}^{-}(W)
:=\sup_{Y\in\mathfrak J^{-}(W)}\max\{-\,F(Y),0\},
\qquad
\mathfrak R_{\rm CL}^{+}(W)
:=\sup_{Y\in\mathfrak J^{+}(W)}\max\{F(Y),0\}.
\]
The restricted sampled SPD-envelope residuals are
\[
\mathfrak R_{\mathcal Q}^{-}(W)
:=\sup_{\hat{Y}\in\mathfrak S_{\mathcal Q}^{-}(W)}
\max\{-\,F(\hat{Y}),0\},
\qquad
\mathfrak R_{\mathcal Q}^{+}(W)
:=\sup_{\hat{Y}\in\mathfrak S_{\mathcal Q}^{+}(W)}
\max\{F(\hat{Y}),0\},
\]
with convention $\sup\emptyset=0$. Since the sampled SPD-envelope jets
are contained in the full Crandall--Lions semijets, we have
\[
\mathfrak R_{\mathcal Q}^{-}(W)\le \mathfrak R_{\rm CL}^{-}(W),
\qquad
\mathfrak R_{\mathcal Q}^{+}(W)\le \mathfrak R_{\rm CL}^{+}(W).
\]
Define the restricted completion errors
\[
\mathfrak C_{\mathcal Q}^{-}(W)
:=\mathfrak R_{\rm CL}^{-}(W)-\mathfrak R_{\mathcal Q}^{-}(W),
\qquad
\mathfrak C_{\mathcal Q}^{+}(W)
:=\mathfrak R_{\rm CL}^{+}(W)-\mathfrak R_{\mathcal Q}^{+}(W).
\]
These quantities measure the full viscosity-inequality violation, within
$\Upsilon$, not detected by the sampled SPD-envelope tests.

For $\epsilon>0$, define the nearly active full-test sets
\[
\mathfrak A^{-}(W)
:=\left\{Y\in\mathfrak J^{-}(W):\max\{-\,F(Y),0\}
\ge
\mathfrak R_{\rm CL}^{-}(W)-\epsilon\right\},
\]
and
\[
\mathfrak A^{+}(W)
:=\left\{Y\in\mathfrak J^{+}(W):\max\{F(Y),0\}
\ge
\mathfrak R_{\rm CL}^{+}(W)-\epsilon\right\}.
\]
Finally, we also define the directed coverage radii
\[
\mathfrak r_{\mathcal Q}^{-}(W)
:=\sup_{Y\in\mathfrak A^{-}(W)}
\inf_{\hat{Y}\in\mathfrak S_{\mathcal Q}^{-}(W)}d(Y,\hat{Y}),
\qquad
\mathfrak r_{\mathcal Q}^{+}(W)
:=\sup_{Y\in\mathfrak A^{+}(W)}
\inf_{\hat{Y}\in\mathfrak S_{\mathcal Q}^{+}(W)}d(Y,\hat{Y}),
\]
with conventions $\sup\emptyset=0$, $\inf\emptyset=+\infty$ and
$\omega_F(+\infty)=+\infty$.

\begin{proposition}\label{prop:semijet-coverage-completion}Let $W\in C(\overline{\Omega})$, and let $\Upsilon$ be a compact jet tube as
above. Then, for every $\epsilon>0$,
\[
\mathfrak C_{\mathcal Q}^{-}(W)
\le
\omega_F\!\left(\mathfrak r_{\mathcal Q}^{-}(W)\right) + \epsilon,
\qquad
\mathfrak C_{\mathcal Q}^{+}(W)
\le
\omega_F\!\left(\mathfrak r_{\mathcal Q}^{+}(W)\right) + \epsilon.
\]
\end{proposition}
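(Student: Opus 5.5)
The plan is to prove the supersolution bound; the subsolution bound follows from the same argument with $\max\{F,0\}$ in place of $\max\{-F,0\}$ and the families $\mathfrak J^{+},\mathfrak S^{+}_{\mathcal Q},\mathfrak A^{+}$. Fix $\epsilon>0$. I will dispose of the trivial cases first. If $\mathfrak J^{-}(W)$ is empty, then $\mathfrak R_{\rm CL}^{-}(W)=0$, and since $\mathfrak R_{\mathcal Q}^{-}(W)\ge 0$ the completion error is nonpositive, so the claim holds. If $\mathfrak S_{\mathcal Q}^{-}(W)$ is empty while $\mathfrak A^{-}(W)$ is nonempty, then $\mathfrak r_{\mathcal Q}^{-}(W)=+\infty$, and the right-hand side is $+\infty$ by the stated convention. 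Because $\Upsilon$ is compact and $F$ is continuous, all residuals are finite, so the differences $\mathfrak C_{\mathcal Q}^{\pm}(W)$ are well defined.

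In the main case, $\mathfrak R_{\rm CL}^{-}(W)$ is a finite supremum, so for every $\epsilon'\in(0,\epsilon)$ there is $Y\in\mathfrak J^{-}(W)$ with $\max\{-F(Y),0\}\ge \mathfrak R_{\rm CL}^{-}(W)-\epsilon'$. In particular $Y\in\mathfrak A^{-}(W)$. By definition of the coverage radius, $\inf_{\hat Y\in\mathfrak S^{-}_{\mathcal Q}(W)} d(Y,\hat Y)\le \mathfrak r^{-}_{\mathcal Q}(W)$. So for any $\eta>0$ there is a sampled envelope jet $\hat Y$ with $d(Y,\hat Y)\le \mathfrak r^{-}_{\mathcal Q}(W)+\eta$. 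Both $Y$ and $\hat Y$ lie in $\Upsilon$, so the modulus gives $|F(Y)-F(\hat Y)|\le\omega_F(\mathfrak r^{-}_{\mathcal Q}(W)+\eta)$. The map $s\mapsto\max\{-s,0\}$ is $1$-Lipschitz, so
\[
\mathfrak R_{\rm CL}^{-}(W)-\epsilon'\le \max\{-F(Y),0\}\le \max\{-F(\hat Y),0\}+\omega_F(\mathfrak r^{-}_{\mathcal Q}(W)+\eta)\le \mathfrak R^{-}_{\mathcal Q}(W)+\omega_F(\mathfrak r^{-}_{\mathcal Q}(W)+\eta).
\]
Letting $\eta\downarrow 0$ and using right-continuity and monotonicity of $\omega_F$ gives $\mathfrak C^{-}_{\mathcal Q}(W)\le \omega_F(\mathfrak r^{-}_{\mathcal Q}(W))+\epsilon'\le\omega_F(\mathfrak r^{-}_{\mathcal Q}(W))+\epsilon$.

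The only delicate point is bookkeeping of the $\epsilon$'s. The set $\mathfrak A^{-}(W)$ is defined with the same $\epsilon$ that appears in the bound, and the near-maximizer $Y$ must land inside it. This is why I use $\epsilon'\le\epsilon$, or $\epsilon'=\epsilon$ directly, since the supremum is approached within $\epsilon$. I also need the convention $\omega_F(+\infty)=+\infty$ in the degenerate cases, and I should note that the restriction to $\Upsilon$ is what makes the modulus uniform. I do not use the inclusion $\mathfrak S\subset\mathfrak J$ here, except to know that $\mathfrak C\ge0$.
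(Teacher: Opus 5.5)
Your proof is correct and follows essentially the same route as the paper: take a nearly active full subjet $Y\in\mathfrak A^{-}(W)$, approximate it within $\mathfrak r^{-}_{\mathcal Q}(W)+\eta$ by a sampled envelope jet, and combine the $1$-Lipschitz property of $s\mapsto\max\{-s,0\}$ with the modulus $\omega_F$ on $\Upsilon$. You then let $\eta\downarrow 0$ using right-continuity, and handle the empty-set cases by the stated conventions. Your use of $\epsilon'<\epsilon$ to ensure $Y\in\mathfrak A^{-}(W)$, and your check that the residuals are finite, make explicit what the paper leaves implicit.
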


\begin{proof}
 If
$\mathfrak A^{-}(W)=\emptyset$, then
$\mathfrak R_{\rm CL}^{-}(W)=\mathfrak R_{\mathcal Q}^{-}(W)=0$, and the
claim is immediate. Otherwise, take any nearly active full subjet $Y\in\mathfrak A^{-}(W)$.
By definition of $\mathfrak A^{-}(W)$,
\[
\max\{-\,F(Y),0\}
\ge
\mathfrak R_{\rm CL}^{-}(W)-\epsilon.
\]
Equivalently,
\[
\mathfrak R_{\rm CL}^{-}(W)
\le
\max\{-\,F(Y),0\}+\epsilon.
\]

Now, for any $\varsigma>0$, choose an SPD-envelope jet $\hat{Y}\in\mathfrak S_{\mathcal Q}^{-}(W)$,
such that
\[
d(Y,\hat{Y})
\le
\mathfrak r_{\mathcal Q}^{-}(W)+\varsigma.
\]
If no such $\hat{Y}$ exists, then
$\mathfrak r_{\mathcal Q}^{-}(W)=+\infty$ and the estimate is trivial.

For the map $s\mapsto \max\{-s,0\}$, we have
\[
\max\{-\,F(Y),0\}
\le
\max\{-\,F(\hat{Y}),0\} + |F(Y)-F(\hat{Y})|.
\]
The first term is bounded by the SPD-envelope residual, $\mathfrak R_{\mathcal Q}^{-}(W)$, and the second term is bounded by the modulus of continuity of $F$.  Therefore,
\[
\max\{-\,F(Y),0\}
\le
\mathfrak R_{\mathcal Q}^{-}(W) +
\omega_F\!\left(\mathfrak r_{\mathcal Q}^{-}(W)+\varsigma \right).
\]
Letting the $\varsigma$ term vanish gives
\[
\max\{-\,F(Y),0\}
\le
\mathfrak R_{\mathcal Q}^{-}(W) +
\omega_F\!\left(\mathfrak r_{\mathcal Q}^{-}(W)\right).
\]
Substituting this into
\[
\mathfrak R_{\rm CL}^{-}(W)
\le
\max\{-\,F(Y),0\}+\epsilon
\]
yields
\[
\mathfrak R_{\rm CL}^{-}(W)
\le
\mathfrak R_{\mathcal Q}^{-}(W) +
\omega_F\!\left(\mathfrak r_{\mathcal Q}^{-}(W)\right) + \epsilon.
\]
Subtracting $\mathfrak R_{\mathcal Q}^{-}(W)$ from both sides gives
\[
\mathfrak C_{\mathcal Q}^{-}(W)
= \mathfrak R_{\rm CL}^{-}(W)-\mathfrak R_{\mathcal Q}^{-}(W)
\le
\omega_F\!\left(\mathfrak r_{\mathcal Q}^{-}(W)\right) + \epsilon.
\]
The subsolution estimate is the same,
replacing $\max\{-\,F,0\}$ by $\max\{F,0\}$.
\end{proof}

\begin{remark} The modulus $\omega_F$ determines how strongly the jet mismatch affects the completion error. If $F$ is Lipschitz on $\Upsilon$, then
$\omega_F(t)\le L_Ft$, and \cref{prop:semijet-coverage-completion} gives
\[
\mathfrak C_{\mathcal Q}^{\pm}(W)
\le
L_F\,\mathfrak r_{\mathcal Q}^{\pm}(W)+\epsilon.
\]
For the HJB operator \eqref{eq:F-def}, this Lipschitz constant can be read from the coefficients. On a compact jet tube where
$\|p\|,\|\hat p\|\le P^{\dagger}$ and $\|A\|_F,\|\hat A\|_F\le A^{\dagger}$, suppose
$f$, $a$, and $\ell$ are uniformly Lipschitz in the state variable, with constants $L_f$, $L_a$, and $L_\ell$, and set
$\bar f:=\sup_{x,c}\|f(x,c)\|$ and
$\bar a:=\sup_{x,c}\|a(x,c)\|_F$. Then
\[
\begin{aligned}
&|F(\zeta,r,p,A)-F(\hat{\zeta},\hat r,\hat p,\hat A)| \\
&\qquad\le
\beta |r-\hat r| +
\bar f\|p-\hat p\| +
\frac12 \bar a\|A-\hat A\|_F +
\left( L_\ell+P^{\dagger}L_f+\frac12 A^{\dagger} L_a
\right)\|\zeta-\hat{\zeta}\|.
\end{aligned}
\]
Thus the Hessian mismatch is weighted by $\frac12\sup_{x,c}\|a(x,c)\|_F$. In deterministic first-order problems $(a\equiv0)$, the operator is insensitive to the Hessian component of the coverage distance. For small diffusion, curvature mismatch contributes weakly to the completion error; for larger diffusion, coverage of the second-order part becomes correspondingly more important.
\end{remark}

\begin{remark}The estimate in \cref{prop:semijet-coverage-completion} shows that the completion error is controlled by the directed distance from active full Crandall--Lions tests to sampled SPD-envelope tests. To see the terms entering this distance, let $Y=(\zeta,W(\zeta),p,A)\in\mathfrak J^{-}(W)$ and let $\hat{Y}=(\hat{\zeta},W(\hat{\zeta}),M(x-\hat{\zeta}),-M)
\in\mathfrak S_{\mathcal Q}^{-}(W)$. Then
\[
d(Y,\hat{Y})
= |\zeta-\hat{\zeta}| + |W(\zeta)-W(\hat{\zeta})| + 
\|p-M(x-\hat{\zeta})\| + \|A+M\|_F .
\]
Notice that the supersolution-side coverage distance contains contact-location
mismatch, value mismatch, gradient/anchor mismatch, and curvature mismatch;
the curvature component is $\|A+M\|_F$. Similarly, if
$Y=(\zeta,W(\zeta),p,A)\in\mathfrak J^{+}(W)$ and
$\hat{Y} = (\hat{\zeta},W(\hat{\zeta}),-M(x-\hat{\zeta}),M)
\in\mathfrak S_{\mathcal Q}^{+}(W)$, then
\[
d(Y,\hat{Y}) =
|\zeta-\hat{\zeta}| + |W(\zeta)-W(\hat{\zeta})| +
\|p+M(x-\hat{\zeta})\| + \|A-M\|_F .
\]
Thus the subsolution-side coverage distance again contains contact-location
mismatch, value mismatch, gradient/anchor mismatch, and curvature mismatch;
the curvature component is $\|A-M\|_F$. Consequently, the coverage radius
measures a local second-order distance between the active Crandall--Lions tests
and the sampled envelope quadratics.
\end{remark}

\begin{remark}The previous remark shows that the curvature component of the coverage radius is determined by the distance between the second-order part $A$ of an active full semijet and the signed SPD curvature generated by the sampled envelope test. On the supersolution side this signed curvature is $-M$, while on the subsolution side it is $+M$. Hence, when the sampled matrices satisfy a spectral restriction such as $\alpha_{\min}I\preceq M\preceq\alpha_{\max}I$, the curvature mismatch compares the eigenvalues of active full-test Hessians against the signed bands $[-\alpha_{\max},-\alpha_{\min}]$ and
$[\alpha_{\min},\alpha_{\max}]$. Sampling more matrices refines coverage inside the chosen signed band, while the remaining distance is the spectral mismatch between the active full-test curvature and that band.
\end{remark}

\section{Experimental parameters}\label[appendix]{sec:sm-hparams}We record the implementation parameters used in the experiments of \cref{sec:experiments}. Values are rounded for readability when additional digits do not affect the learned critic or policy. We list the parameters that materially affect the dynamics, sampling, losses, and optimization. Throughout, $\text{lr}$ denotes the learning rate. Workers refers to the number of parallel rollout processes and steps $S$ denotes the number of time steps per worker in each update; their product gives the total number of samples collected per update. There was no additional $\alpha$-scaling of the loss terms in any of the presented experiments.

\begin{remark}
In our implementation, the spectral band $[\alpha_{\min},\alpha_{\max}]$ can be interpreted as a curvature-resolution parameter for the sampled family of SPD-envelope jets. When this band is numerically wide, rescaling the viscosity and proximal losses by factors of order $\alpha_{\max}^{-2}$ and $\alpha_{\max}^{-1}$ improves numerical stability. For the proximal-optimality loss, a step size of $\eta \sim (1+\alpha_{\max})^{-1}$ is suggested. This scales the gradient step in the projected proximal update with the largest sampled curvature.
\end{remark}

The problem and training parameters for the experiment of \cref{exp:euler} are listed in \cref{tab:sm-euler-problem,tab:sm-euler-core,tab:sm-euler-visc-adv}. The entropy coefficient and learning rates are held fixed throughout training for this problem.

\begin{table}[htbp]
\footnotesize
\caption{Problem, HJB, and rollout-discretization parameters for the rigid-body stabilization experiment.}
\label{tab:sm-euler-problem}
\centering
{\setlength{\tabcolsep}{4pt}
\renewcommand{\arraystretch}{1.04}
\begin{tabular}{@{}p{0.40\textwidth}@{\hspace{1em}}p{0.25\textwidth}@{}}
\hline
\textbf{Parameter} & \textbf{Value} \\
\hline
$\Delta t$ & $10^{-3}$ \\
$\beta$ & $0.8$ \\
$\Omega$ & $\{\|\omega\|\le 5\}\setminus\{\|\omega\|\le r_{\text{tgt}}\}$ \\
$r_{\text{tgt}}$ & $5\times 10^{-3}$ \\
$U$ & $[-15,15]^3$ \\
$\sigma$ & $0.05\,I_3$ \\
$(I_1,I_2,I_3)$ & $(1,2,3)$ \\
Weights in $\ell(\omega,\tau)$ & $Q=I_3,\;R=0.1\,I_3$ \\
Initial-state support & $r_{\text{tgt}}<\|\omega(0)\|<5$ \\
Terminal exit penalty & $50$ \\
Horizon-outside penalty & $0$ \\
Maximum episode length & $10^{6}$ \\
\hline
\end{tabular}}
\end{table}

\begin{table}[htbp]
\footnotesize
\caption{Network architecture, policy parameterization, PPO, and optimization parameters for the rigid-body stabilization experiment.}
\label{tab:sm-euler-core}
\centering
{\setlength{\tabcolsep}{4pt}
\renewcommand{\arraystretch}{1.04}
\begin{tabular}{@{}p{0.50\textwidth}@{\hspace{1em}}p{0.21\textwidth}@{}}
\hline
\textbf{Parameter} & \textbf{Value} \\
\hline
Actor hidden widths & $(64,64)$ \\
Critic hidden widths & $(64,64)$ \\
Proximal hidden widths & $(64,64)$ \\
Hidden-layer activation & $\tanh$ \\
Linear-layer normalization & weight normalization\\
Action limit & $15$ \\
Policy actor log-standard-deviation initialization & $0$ \\
Policy log-standard-deviation bounds & $(-5.0,\,-1.5)$ \\
Workers$\times$steps & $16\times 128$ \\
$N$ (PPO epochs) & $6$ \\
$B$ (minibatch size) & $64$ \\
$\gamma=e^{-\beta\Delta t}$ & $0.99920032$ \\
$\epsilon$ (PPO clipping parameter) & $5.4\times 10^{-2}$ \\
$\lambda_{\text{GAE}}$ & $0.93$ \\
$\lambda_{\text{ent}}$ & $5.4\times 10^{-6}$ \\
$\lambda_{TD}$ & $0.5$ \\
$\text{lr}_{\text{actor}}$ & $1.9\times 10^{-7}$ \\
$\text{lr}_{\text{critic}}$ & $1.4\times 10^{-4}$ \\
$\text{lr}_{\text{prox}}$ & $4.8\times 10^{-4}$ \\
Critic learning-rate schedule & fixed \\
Weight decay & $10^{-4}$ \\
Gradient clipping & $10$ \\
Outer training iterations & $400$ \\
Random seed & $0$ \\
\hline
\end{tabular}}
\end{table}

\begin{table}[htbp]
\footnotesize
\caption{Viscosity-envelope, jet-alignment, and proximal-adversary parameters for the rigid-body stabilization experiment.}
\label{tab:sm-euler-visc-adv}
\centering
{\setlength{\tabcolsep}{4pt}
\renewcommand{\arraystretch}{1.04}
\begin{tabular}{@{}p{0.4\textwidth}@{\hspace{1em}}p{0.18\textwidth}@{}}
\hline
\textbf{Parameter} & \textbf{Value} \\
\hline
$K$ (curvature matrices) & $4$ \\
$\alpha_{\min}$ & $10^{-2}$ \\
$\alpha_{\max}$ & $10^{2}$ \\
$\alpha$-range schedule & fixed; no widening \\
$\rho_{\text{cover}}$ & $0.84$ \\
$\lambda_{\text{visc}}$ & $1.16\times 10^{-2}$ \\
$\lambda_{\text{bdy}}$ & $4.8$ \\
$\lambda_{\text{jet}}$ & $6.5\times 10^{-6}$ \\
$\lambda_{\text{adv}}$ & $5.1\times 10^{-6}$ \\
$\lambda_{\text{env}}$ & $3.8\times 10^{-5}$ \\
$\lambda_{\text{prox-opt}}$ & $2.6\times 10^{-2}$ \\
$K_{\text{adv}}$ & $2$ \\
$\eta$ & $9.9\times 10^{-3}$ \\
\hline
\end{tabular}}
\end{table}

For the Van der Pol experiment, the problem, HJB, and training parameters are presented in \cref{exp:vdp}. Advantages are normalized prior to the policy update for this problem. The entropy coefficient and learning rates are similarly held fixed throughout training. In particular, the maximum episode length of 200, along with a macro time-step of 0.05  listed in \cref{tab:sm-vdp-gaussian-problem} imply that the time-limit truncation is 10. The macro time-step is the interval over which the policy is held fixed before being updated with a new control. \cref{tab:sm-vdp-gaussian-problem,tab:sm-vdp-gaussian-core,tab:sm-vdp-gaussian-pde} contain the training, PDE and optimization parameters.

\begin{table}[htbp]
\footnotesize
\caption{Problem, HJB, and rollout-discretization parameters for the Van der Pol experiment.}
\label{tab:sm-vdp-gaussian-problem}
\centering
{\setlength{\tabcolsep}{4pt}
\renewcommand{\arraystretch}{1.04}
\begin{tabular}{@{}p{0.40\textwidth}@{\hspace{1em}}p{0.25\textwidth}@{}}
\hline
\textbf{Parameter} & \textbf{Value} \\
\hline
$\Delta t$ & $5\times 10^{-2}$ \\
RK4 substep $\Delta t_2$ & $10^{-3}$ \\
$\beta$ & $0.1$ \\
Domain $\Omega$ & $[-2,2]^2\setminus\{\|y\|\le r_{\text{tgt}}\}$ \\
$r_{\text{tgt}}$ & $5\times 10^{-2}$ \\
$U$ & $[-1,1]$ \\
$\sigma$ & $0$ \\
Running cost $\ell(y,u)$ & $0.1$ \\
Initial-state support & $y(0)\in[-2,2]^2$ with $r_{\text{tgt}}<\|y(0)\|<2.8$ \\
Terminal exit penalty & $1$ \\
Maximum episode length & $200$ \\
\hline
\end{tabular}}
\end{table}

\begin{table}[htbp]
\footnotesize
\caption{Network architecture, policy parameterization, PPO, and optimization parameters for  the Van der Pol experiment.}
\label{tab:sm-vdp-gaussian-core}
\centering
{\setlength{\tabcolsep}{4pt}
\renewcommand{\arraystretch}{1.04}
\begin{tabular}{@{}p{0.50\textwidth}@{\hspace{1em}}p{0.21\textwidth}@{}}
\hline
\textbf{Parameter} & \textbf{Value} \\
\hline
Actor widths & $(64,64)$ \\
Critic widths & $(128,128,128)$ \\
Proximal widths & $(64,64)$ \\
Hidden-layer activation & $\tanh$ \\
Linear-layer normalization & weight normalization\\
Action limit & $1$ \\
Policy actor log-standard-deviation initialization & $-1.0$ \\
Policy log-standard-deviation bounds & $(-5.0,\,-1.0)$ \\
Workers$\times$steps & $16\times 128$ \\
$N$ & $4$ \\
$B$ & $512$ \\
$\gamma=e^{-\beta\Delta t}$ & $0.99501248$ \\
$\epsilon$  & $0.10$ \\
$\lambda_{\text{GAE}}$ & $0.95$ \\
$\lambda_{\text{ent}}$ & $5\times 10^{-4}$ \\
$\lambda_{TD}$ & $1.0$ \\
$\text{lr}_{\text{actor}}$ & $2.0\times 10^{-5}$ \\
$\text{lr}_{\text{critic}}$ & $1.5\times 10^{-4}$ \\
$\text{lr}_{\text{prox}}$ & $1.5\times 10^{-5}$ \\
Critic learning-rate schedule & fixed \\
Weight decay & $0$ \\
Gradient clipping & $10$ \\
Outer training iterations & $2.5\times 10^{5}$ \\
Random seed & $0$ \\
\hline
\end{tabular}}
\end{table}

\begin{table}[htbp]
\footnotesize
\caption{Viscosity-envelope, jet-alignment, and proximal-adversary parameters for the Van der Pol experiment.}
\label{tab:sm-vdp-gaussian-pde}
\centering
{\setlength{\tabcolsep}{4pt}
\renewcommand{\arraystretch}{1.04}
\begin{tabular}{@{}p{0.4\textwidth}@{\hspace{1em}}p{0.18\textwidth}@{}}
\hline
\textbf{Parameter} & \textbf{Value} \\
\hline
$K$  & $64$ \\
$\alpha_{\min}$ & $0.25$ \\
$\alpha_{\max}$ & $12$ \\
$\alpha$-range schedule & fixed; no widening \\
$\rho_{\text{cover}}$ & $0.50$ \\
$\lambda_{\text{visc}}$ & $0.08$ \\
$\lambda_{\text{bdy}}$ & $0.05$ \\
$\lambda_{\text{jet}}$ & $0.007$ \\
$\lambda_{\text{adv}}$ & $0.03$ \\
$\lambda_{\text{env}}$ & $0$ \\
$\lambda_{\text{prox-opt}}$ & $0.001$ \\
$K_{\text{adv}}$ & $2$ \\
$\eta$ & $7.69\times 10^{-2}$ \\
\hline
\end{tabular}}
\end{table}

The MuJoCo and Humanoid experiments are reported using the same categories of implementation details. We list the shared actor-critic architecture, PPO optimization settings, normalization choices, the perturbation level used in noisy evaluation, and the method-dependent critic weights in a separate table for this benchmark family.

For the MuJoCo baselines, the PPO/HJBPPO settings were based on the appendix of \cite{mukherjee2023}: rollout horizon \(2048\), learning rate \(3\times 10^{-4}\), \(10\) PPO epochs $N$, minibatch size \(B=64\), and discount factor \(\gamma=0.99\). All benchmark domains used: tanh multilayer perceptrons, orthogonal initialization for the actor, weight normalization for critic and proximal networks, state-independent Gaussian log-standard-deviation parameters clipped to \([-5,2]\), input normalization, and no reward normalization. The learning rates were kept fixed throughout the training. For these benchmark tasks, \(\lambda_{\text{bdy}}=0\) because unlike the exit-time Euler and Van der Pol examples, these environments are not posed as bounded-domain Dirichlet problems with known terminal trace $g$. Their learning signal is purely interior. Accordingly, the PDE-side terms should be interpreted as an interior sampled-envelope consistency term rather than as full boundary-trace enforcement of a Dirichlet problem. 

For all MuJoCo, Humanoid, and Safety Gymnasium runs, we used diagonal curvature banks
\begin{equation*}
M_k=\operatorname{diag}(\alpha_{k,1},\ldots,\alpha_{k,n}),
\end{equation*}
i.e., \(R_k=I\) in \eqref{eq:M-bank}, so that the proximal network receives \((x,\log\operatorname{diag}M_k,b)\) as input. The complete settings are listed in \cref{tab:sm-mujoco-core-transposed,tab:sm-mujoco-pde-transposed}.

\begin{table}[htbp]
\footnotesize
\caption{Network architecture, policy parameterization, PPO, and optimization parameters for the MuJoCo locomotion and Humanoid-v5 experiments. All methods share the same actor and critic architectures within each environment; V-GAC additionally uses the proximal network listed in the table. The entropy coefficient is linearly annealed to zero over the first \(70\%\) of training.}
\label{tab:sm-mujoco-core-transposed}
\centering
{\setlength{\tabcolsep}{4pt}
\renewcommand{\arraystretch}{1.03}
\resizebox{\linewidth}{!}{%
\begin{tabular}{@{}lccccc@{}}
\hline
\textbf{Hyperparameter} &
\textbf{HalfCheetah-v5} &
\textbf{Hopper-v5} &
\textbf{Walker2d-v5} &
\textbf{Ant-v5} &
\textbf{Humanoid-v5} \\
\hline
Actor\ widths                 & \((64,64)\)          & \((64,64)\)          & \((64,64)\)          & \((128,128)\)        & \((128,128)\) \\
Critic widths                & \((64,64)\)          & \((64,64)\)          & \((64,64)\)          & \((128,128)\)        & \((128,128)\) \\
Proximal widths                  & \((64,64)\)          & \((64,64)\)          & \((64,64)\)          & \((128,128)\)        & \((128,128)\) \\
Workers\(\times\)steps       & \(16\times128\)      & \(16\times128\)      & \(16\times128\)      & \(16\times128\)      & \(32\times64\) \\
$N$                      & \(10\)               & \(10\)               & \(10\)               & \(10\)               & \(10\) \\
$B$ & \(64\) & \(64\) & \(64\) & \(64\) & \(64\) \\
\(\boldsymbol{\epsilon}\)    & \(0.20\)             & \(0.20\)             & \(0.20\)             & \(0.20\)             & \(0.20\) \\
\(\lambda_{\text{GAE}}\) & \(0.95\) & \(0.95\) & \(0.95\) & \(0.95\)
 & \(0.95\) \\ 
\(\boldsymbol{\lambda_{\text{ent},0}}\) 
                              & \(5\times10^{-4}\)   & \(1\times10^{-3}\)   & \(5\times10^{-4}\)   & \(1\times10^{-3}\)   & \(5\times10^{-4}\) \\
\(\lambda_{\text{TD}}\)           & 0.5 & 0.5 & 0.5 & 0.5 & 0.5 \\
\(\text{lr}_{\text{actor,critic}}\)                & \(3.0\times10^{-4}\) & \(3.0\times10^{-4}\) & \(3.0\times10^{-4}\) & \(3.0\times10^{-4}\) & \(2.0\times10^{-4}\) \\
\(\text{lr}_{\text{prox}}\)                  & \(5.0\times10^{-4}\) & \(7.5\times10^{-4}\) & \(5.0\times10^{-4}\) & \(7.5\times10^{-4}\) & \(3.0\times10^{-4}\) \\
Gradient clipping & \(0.5\) & \(0.5\) & \(0.5\) & \(0.5\) & \(0.5\) \\
Total steps                  & \(1.0\times 10^{6}\) & \(1.0\times 10^{6}\) & \(1.0\times 10^{6}\) & \(1.0\times 10^{6}\) & \(1.0\times 10^{7}\) \\
Evaluated every                   & \(2.5\times 10^{4}\) & \(2.5\times 10^{4}\) & \(2.5\times 10^{4}\) & \(2.5\times 10^{4}\) & \(2.5\times 10^{5}\) \\
\hline
\end{tabular}%
}
}
\end{table}

\begin{table}[htbp]
\footnotesize
\caption{Viscosity-envelope, jet-alignment, and proximal-adversary parameters for the MuJoCo locomotion and Humanoid-v5 experiments. The actor and critic learning rates are equal unless noted otherwise. The HJBPPO values of \(\lambda_{\text{HJB}}\) for HalfCheetah/Hopper/Walker2d/Ant/Humanoid follow \cite{mukherjee2023}; the V-GAC-specific coefficients are the settings used in our runs.}
\label{tab:sm-mujoco-pde-transposed}
\centering
{\setlength{\tabcolsep}{4pt}
\renewcommand{\arraystretch}{1.03}
\resizebox{\linewidth}{!}{%
\begin{tabular}{@{}lccccc@{}}
\hline
\textbf{Hyperparameter} &
\textbf{HalfCheetah-v5} &
\textbf{Hopper-v5} &
\textbf{Walker2d-v5} &
\textbf{Ant-v5} &
\textbf{Humanoid-v5} \\
\hline
\(K\) & \(4\) & \(4\) & \(4\) & \(6\) & \(2\) \\
\(\alpha_{\min}\)                    & \(1.0\times10^{-2}\) & \(1.0\times10^{-2}\) & \(1.0\times10^{-2}\) & \(5.0\times10^{-3}\) & \(1.0\times10^{-3}\) \\
\(\alpha_{\max}\)                    & \(1.0\times10^{1}\)  & \(2.0\times10^{1}\)  & \(2.0\times10^{1}\)  & \(5.0\times10^{1}\)  & \(5.0\) \\
$\alpha$-range schedule & fixed & fixed & fixed & fixed & fixed \\
\(\rho_{\text{cover}}\)            & \(0.25\)             & \(0.35\)             & \(0.35\)             & \(0.45\)             & \(0.15\) \\
\(\lambda_{\text{HJB}}\)           & \(1.0\times10^{-1}\) & \(1.0\times10^{-1}\) & \(1.0\times10^{-1}\) & \(1.0\times10^{-1}\) & \(1.0\times10^{-4}\) \\
\(\lambda_{\text{visc}}\)          & \(1.0\times10^{-2}\) & \(1.5\times10^{-2}\) & \(1.25\times10^{-2}\) & \(2.0\times10^{-2}\) & \(5.0\times10^{-3}\) \\
\(\lambda_{\text{jet}}\)           & \(2.0\times10^{-3}\) & \(3.0\times10^{-3}\) & \(2.5\times10^{-3}\) & \(3.0\times10^{-3}\) & \(1.0\times10^{-3}\) \\
\(\lambda_{\text{adv}}\)           & \(1.0\)              & \(1.0\)              & \(1.0\)              & \(1.0\)              & \(1.0\) \\
\(\lambda_{\text{env}}\)           & \(5.0\times10^{-2}\) & \(7.5\times10^{-2}\) & \(5.0\times10^{-2}\) & \(1.0\times10^{-1}\) & \(1.0\times10^{-1}\) \\
\(\lambda_{\text{prox-opt}}\) & \(1.0\times10^{-1}\) & \(1.5\times10^{-1}\) & \(1.25\times10^{-1}\) & \(2.0\times10^{-1}\) & \(2.5\times10^{-1}\) \\
\(K_{\text{adv}}\) & \(1\) & \(2\) & \(1\) & \(2\) & \(2\) \\
\(\eta\) & $5.0\times10^{-2}$ & $5.0\times10^{-2}$ & $5.0\times10^{-2}$ & $2.5\times10^{-2}$ & $2.5\times10^{-2}$ \\ 
\hline
\end{tabular}
}
}
\end{table}

For Safety Gymnasium, we similarly kept the same actor-critic architecture and PPO update scheme, but tuned the PDE parameters more conservatively than in MuJoCo. Empirically, these navigation tasks were more sensitive to critic-side PDE terms under the reported training protocol, so we used more conservative weights than in the locomotion tasks. The scalarization weight in $\ell(x,c)=-R_{\text{mdp}} (x,c)+\lambda_{\kappa} \kappa(x,c)$ was held fixed at $\lambda_{\kappa}=0.10$ across all safety environments and all three methods. The cost limit was set to $\kappa_{\max}=25$. Throughout the training, the learning rates were kept fixed.  The full settings are listed in \cref{tab:sm-safety-core-transposed,tab:sm-safety-pde-transposed}.

\begin{table}[htbp]
\footnotesize
\caption{Network architecture, policy parameterization, PPO, and optimization parameters for   the Safety Gymnasium experiments. The entropy coefficient is linearly annealed to zero over the first \(70\%\) of training.}
\label{tab:sm-safety-core-transposed}
\centering
{\setlength{\tabcolsep}{4pt}
\renewcommand{\arraystretch}{1.03}
\resizebox{\linewidth}{!}{%
\begin{tabular}{@{}lccc@{}}
\hline
\textbf{Hyperparameter} &
\textbf{SafetyPointGoal1-v0} &
\textbf{SafetyPointPush1-v0} &
\textbf{SafetyCarGoal1-v0} \\
\hline
Actor widths                 & \((64,64)\)         & \((64,64)\)         & \((128,128)\) \\
Critic widths                & \((64,64)\)         & \((64,64)\)         & \((128,128)\) \\
Prox widths                  & \((64,64)\)         & \((64,64)\)         & \((128,128)\) \\
Workers\(\times\)steps       & \(16\times128\)     & \(16\times128\)     & \(16\times128\) \\
$N$                          & \(10\)              & \(10\)              & \(10\) \\
$B$ & \(64\) & \(64\) & \(64\) \\
\(\boldsymbol{\epsilon}\)    & \(0.20\)            & \(0.20\)            & \(0.20\) \\
\(\lambda_{\text{GAE}}\) & \(0.95\) & \(0.95\) & \(0.95\) \\ 
\(\boldsymbol{\lambda_{\text{ent},0}}\) & \(1.0\times10^{-3}\) & \(2.0\times10^{-3}\) & \(1.0\times10^{-3}\) \\
\(\lambda_{\text{TD}}\) & 0.5 & 0.5 & 0.5 \\
\(\text{lr}_{\text{actor,critic}}\)                  & \(3.0\times10^{-4}\) & \(3.0\times10^{-4}\) & \(2.5\times10^{-4}\) \\
\(\text{lr}_{\text{prox}}\)                    & \(5.0\times10^{-4}\) & \(7.5\times10^{-4}\) & \(5.0\times10^{-4}\) \\
Gradient clipping & \(0.5\) & \(0.5\) & \(0.5\) \\
Total steps                  & \(1.0\times10^{7}\) & \(1.0\times10^{7}\) & \(1.0\times10^{7}\) \\
Evaluated every                   & \(2.5\times10^{5}\) & \(2.5\times10^{5}\) & \(2.5\times10^{5}\) \\
\hline
\end{tabular}%
}
}
\end{table}
\begin{table}[htbp]
\footnotesize
\caption{Viscosity-envelope, jet-alignment, and proximal-adversary parameters for  the Safety Gymnasium experiments.}
\label{tab:sm-safety-pde-transposed}
\centering
{\setlength{\tabcolsep}{4pt}
\renewcommand{\arraystretch}{1.03}
\resizebox{\linewidth}{!}{%
\begin{tabular}{@{}lccc@{}}
\hline
\textbf{Hyperparameter} &
\textbf{SafetyPointGoal1-v0} &
\textbf{SafetyPointPush1-v0} &
\textbf{SafetyCarGoal1-v0} \\
\hline
\(K\) & \(4\) & \(5\) & \(4\) \\
\(\alpha_{\min}\) & \(5.0\times10^{-3}\) & \(5.0\times10^{-3}\) & \(1.0\times10^{-2}\) \\
\(\alpha_{\max}\)& \(1.0\times10^{1}\)  & \(2.0\times10^{1}\)  & \(1.0\times10^{1}\) \\
$\alpha$-range schedule & fixed & fixed & fixed \\
\(\rho_{\text{cover}}\) & \(0.50\) & \(0.55\) & \(0.45\) \\
\(\lambda_{\text{HJB}}\) & \(1.0\times10^{-3}\) & \(1.0\times10^{-3}\) & \(5.0\times10^{-4}\) \\
\(\lambda_{\text{visc}}\) & \(8.0\times10^{-3}\) & \(1.0\times10^{-2}\) & \(8.0\times10^{-3}\) \\
\(\lambda_{\text{jet}}\) & \(1.5\times10^{-3}\) & \(2.0\times10^{-3}\) & \(1.5\times10^{-3}\) \\
\(\lambda_{\text{adv}}\) & \(1.0\) & \(1.0\) & \(1.0\) \\
\(\lambda_{\text{env}}\) & \(5.0\times10^{-2}\) & \(7.5\times10^{-2}\) & \(7.5\times10^{-2}\) \\
\(\lambda_{\text{prox-opt}}\) & \(1.0\times10^{-1}\) & \(1.5\times10^{-1}\) & \(1.5\times10^{-1}\) \\
\(K_{\text{adv}}\) & \(1\) & \(2\) & \(2\) \\
\(\eta\) & $5.0\times10^{-2}$ & $5.0\times10^{-2}$ & $2.5\times10^{-2}$ \\ 
\hline
\end{tabular}
}
}
\end{table}

\FloatBarrier
%%%%%%%%%%%%%%%%%%%%%%%%%%%%%%%%%%%%%%%%%%%%%%%%%%%%%%%%%%%%%%%%%%%%%%%%%%%%%%%%%%%%%%%%

\section*{Acknowledgments}
Alen Golpashin and Bruce Conway acknowledge the support of the U.S. Air Force Research Laboratory (AFRL) under grant number FA8651-23-1-0001. Any opinions and findings in this paper are of the authors and do not reflect those of the AFRL. This work used the Accelerating Computing for Emerging Sciences (ACES) at Texas A\&M University through allocation MTH240017 from the Advanced Cyberinfrastructure Coordination Ecosystem: Services \& Support (ACCESS) program, which is supported by U.S. National Science Foundation grant numbers 2138259, 2138286, 2138307, 2137603, and 2138296.

\bibliographystyle{siamplain}
\bibliography{references}
\end{document}